\newtheorem{theorem}{Theorem}[section]
\newtheorem{thm}[theorem]{Theorem}
\newtheorem{lem}[theorem]{Lemma}
\newtheorem{prop}[theorem]{Proposition}
\newtheorem{cor}[theorem]{Corollary}
\newtheorem{notat}[theorem]{Notation}
\newtheorem{fact}[theorem]{Fact}
\theoremstyle{definition}
\newtheorem{definition}[theorem]{Definition}
\newtheorem{exam}[theorem]{Example}
\theoremstyle{remark}
\newtheorem{rem}[theorem]{Remark}
\newtheorem{qu}[theorem]{Question}
\newcommand{\Add}{\mathop{{\rm Add}}\nolimits}
\newcommand{\RMod}{R\text{\rm -Mod}}
\newcommand{\Rmod}{R{\text{\rm -mod}}}
\newcommand{\RInj}{R\text{\rm -Inj}}
\newcommand{\ModR}{\text{\rm Mod-}R}
\newcommand{\modR}{\text{\rm mod-}R}
\newcommand{\Rproj}{R{\text{\rm -proj}}}
\newcommand{\Rfree}{R{\text{\rm -free}}}
\newcommand{\PGen}{\text{\rm PGen}}
\newcommand{\End}{\mathop{\rm End}}
\newcommand{\Hom}{\mathop{\rm Hom}}
\newcommand{\D}{\rm{D}}
\newcommand{\pp}{{\rm{pp}}}
\newcommand{\ppf}{{\rm{ppf}}}
\newcommand{\tor}{{\rm{t}}}
\newcommand{\cal}{\mathcal}
\renewcommand{\bar}{\overline}
\newcommand{\br}{\bar}
\renewcommand{\phi}{\varphi}
\renewcommand{\rho}{\varrho}
\newcommand{\cB}{\cal B}
\newcommand{\cC}{\cal C}
\newcommand{\cD}{\cal D}
\newcommand{\tC}{\tilde{C}}
\newcommand{\cF}{\cal F}
\newcommand{\cG}{\cal G}
\newcommand{\cK}{\cal K}
\newcommand{\cL}{\cal L}
\newcommand{\cP}{\cal P}
\newcommand{\cS}{\cal S}
\renewcommand{\to}{\longrightarrow}
\begin{document}
\dedicatory{To SK Jain}
\title{Strict Mittag-Leffler modules and purely generated classes}


\author{Philipp Rothmaler}
\address{CUNY, The Graduate Center, 365 Fifth Avenue, Room 4208, NY, NY 10016}
\curraddr{}
\email{philipp.rothmaler@bcc.cuny.edu}
\thanks{Supported in part by PSC CUNY Awards 64595-00 42 and 68835-00 46, 
BCCF Faculty Scholarship Support Grant (Spring Cycle 2018)}


\subjclass[2010]{16D40, 16B70, 16D80, 16S90}

\date{published in Contemporary Mathematics, Volume {\bf 751}, 2020. https://doi.org/10.1090/conm/751/15085}

\begin{abstract}
We study versions of strict Mittag-Leffler modules relativized to a class $\cK$ (of modules), that is, \emph{strict} versions (in the technical sense of Raynaud and Gruson) of $\cK$-Mittag-Leffler modules, as investigated in the preceding paper, {\em Mittag-Leffler modules and definable subcategories},  in this very series. 
\end{abstract}

\maketitle
%
%
%
%
%


Goodearl \cite{Goo} noticed (in different terms) that a module is $R_R$-Mittag-Leffler if and only if the inclusion of any of its finitely generated submodules factors through a finitely presented module. (This can be found, among other more or less relative Mittag-Leffler properties, in the preceding paper \cite{MLII}.) Raynaud and Gruson \cite{RG} characterized \emph{strict} Mittag-Leffler modules as those which can be mapped down to any finitely generated submodule in such a way that this map pointwise fixes the given finitely generated submodule and factors through a finitely presented module. There is a major difference here: in the strict version one must be able to map the \emph{entire} module each time one examines a finitely generated submodule. What is in common though is a certain local behavior concerning finitely generated submodules or, one could say, finite subsets (the generator sets of them). One way of dealing with local behavior is to consider separation properties. Baer \cite{Bae} introduced separable abelian groups as those in which every finite subset can be `separated'  by (is contained in) a direct summand which is completely decomposable, see \cite{F}. Whatever these completely decomposable groups may be, inspired by Baer's original investigations, a theory of separation has emerged that has taken other classes of groups as the pool from which to choose the separating direct summands, for instance free groups, see \cite{EM}.  

For our purposes the most natural choice of separation module is the class $\Rmod$ of finitely presented modules, as in \cite{GIRT} and here in Definition \ref{sepdef}(3) and (5) (and thereafter). As was shown in \cite[Thm.\,1]{GIRT},  all strict Mittag-Leffler modules are so separable if and only if all pure-projective modules are direct sums of finitely presented modules---while, in general, a module is 
pure-projective if and only if it is a \emph{direct summand} of a direct sum of finitely presented modules. This leads us  to proving that, in general, a module is strict Mittag-Leffler if and only if it is a \emph{direct summand} of a separable module, Corollary \ref{classicalThm.1}. And while this has been known in this form since \cite{HT}, we prove it for arbitrary relativizations in a rather general context, Corollary \ref{f.g.B:1}, as a consequence of a more general result, our first separation theorem \ref{Thm.1}. The other part of this theorem characterizes the (non-strict) relativized Mittag-Leffler modules in terms of another separability, that by pure submodules in place of direct summands. The unrelativized case of it says that a module is Mittag-Leffler if and only if it is a direct summand of a pure separable module, i.e., of a module all of whose finite subsets can be contained in a pure submodule that is finitely presented, Corollary \ref{classicalThm.1}. 

The second separation theorem \ref{Thm.2} describes when the clause `direct summand' is superfluous in Theorem \ref{Thm.1}. The most transparent form this assumes is in the classical, unrelativized case: all strict Mittag-Leffler modules are separable  if and only if all  Mittag-Leffler modules are pure separable---and this is the case precisely when also
in the description of pure-projective modules `direct summand' is superfluous, see Corollary \ref{basicGIRT}.

What are all these relativizations and what the rather more general context? As for the context, the most satisfying results, in  \S6, have been achieved in purely generated classes, in particular, in purely resolving classes (cf.\,Definition \ref{pgen}), as developed in \cite{HR}. Relativizations to classes of modules occurred very naturally in the study of Mittag-Leffler modules in the preceding paper \cite{MLII}. So it is only natural to ask what the corresponding relativizations should be in the strict case. Taking into account the description of (the unrestricted) strict Mittag-Leffler modules, given in \cite[Prop.\,1]{GIRT}, as those in which every tuple freely realizes a certain pp formula, the correct relativization, it seems, should be obtained by relativizing the scope of `freeness' in the free realizations part of the definition, see Definitions \ref{Ffree}(2) and \ref{sKML}. Another approach to achieve a similar goal can be found in \cite{AH} in the shape of `stationary' modules.
 
In the (non-strict) Mittag-Leffler case a fundamental feature was that the relativizations to a certain class depended only on the definable subcategory this class generates, which greatly simplifies many considerations and formulations.  It also allowed for free transfer between formulations for classes $\cK$ of right modules and classes $\cL$ of left modules by simply applying elementary duality to the defining pairs of pp formulas, as was first done in \cite{Her}. So, a module is $\cK$-Mittag-Leffler if and only if it is $\cL$-atomic, whenever $\cK$ and $\cL$ generate elementarily dual definable subcategories, cf.\,\cite[Thm.3.1]{MLII}.

For the strict versions the situation is different, and I found no particular choice of module on the other side naturally lending itself to a similar transfer as in the non-strict case, which is why I mostly stay on the same side as the original module. This results in strict $\cL$-atomic modules having  preference over strict $\cK$-Mittag-Leffler modules. Even more, I \emph{define} the latter concept only for definable subcategories $\cK$---namely, as the strict $\cL$-atomic modules for $\cL$, the definable subcategory elementarily dual to $\cK$, cf.\,Definition  \ref{sKML}.
 
 An actual example showing that the state of affairs is indeed different in the strict case was provided only by an anonymous referee, who pointed out the role of pure-injective modules for this study and how this could be used to obtain two classes of modules that generate the same definable subcategory, yet yield different notions of strict atomicity, see Remark \ref{ref} at the beginning of \S\ref{target}. More on this issue can be found in \S\ref{selfsepsect} and the final  \S\ref{lastsect}.  
 
Some results seem new even in the classical situation of $\cL$ being the entire module category. For instance,
the pp definability of finitely generated endosubmodules of any strict Mittag-Leffler module, Corollary \ref{ppdef}, which  had been previously known for pure-projective modules, cf.\ \cite[Cor.\,2.1.27]{P2}.
Another such result is the passage `from elements to tuples:'   in order for a module to be strict Mittag-Leffler it suffices that every \emph{element} be a free realization of some (unary) pp formula, Corollary \ref{unary}. This is done in the larger context of modules purely generated by a strict relatively atomic module, see Proposition \ref{from1ton}.

In  \cite{HR} axiomatizations of
 purely generated classes by  (usually infinitary) implications of pp formulas were found that provide a better handle on the problems arising with  relativizations. They show that purely generated classes are F-classes in the sense of \cite{PRZ2}.
 In \S\ref{puregen} we pick up  this theory  and specify it to fit purely resolving classes, cf.\,Lemma \ref{pureresolvF}. The resulting more special axiomatization entails  some  downward  L\"owenheim-Skolem-like behavior, called self-separation, see \S\S\ref{ctbleLsep}--\ref{specialF}. A typical application of this is Theorem \ref{LMLisML} showing that relative atomic modules in a purely resolving class are already (fully) Mittag-Leffler.

\bigskip


 I would like to thank Ivo Herzog and Martin Ziegler for inspiring discussions. Further thanks are due to an anonymous referee for her or his contribution to \S\ref{target} and other helpful comments improving the paper. 
 
%
\bigskip

{\bf Preliminaries.} The reader is assumed to be somewhat familiar with \cite{MLII}---its main techniques and definitions---in particular, with pp formulas and types, their finite generation and free realization, and definable subcategories and the like.

Throughout, $\cK$ denotes a nonempty class of right $R$-modules and $\cL$ a nonempty class of left $R$-modules. `Module' means left $R$-module unless otherwise specified. $\RInj$ stands for the class of injective (left) $R$-modules, $\sharp$ for the  class of absolutely pure modules (with subscript $R$ to indicate the side when necessary), and $\flat$ for that of flat modules (again, possibly, with a left or right subscript $R$).

 \emph{Map} means  \emph{(homo)\,morphism}, at least when it is  between modules. $(M, N)$ is a shorthand for $\Hom(M, N)$, so  $(M, M)$ means $\End M$. 

We often identify a tuple with the (finite) set of its entries, which will be completely clear from the context.
All tuples and formulas are assumed to be matching.

Herzog \cite[\S4]{Her} discovered that the map that sends a pp pair $\phi/\psi$ to the pair $\D\psi/\D\phi$ can be used to extend elementary duality from pp formulas to closed sets of the Ziegler spectrum  (cf.\ \cite[Thm.5.4.1]{P2}) and hence to definable subcategories, and even to arbitrary theories of modules. Ever since we have the notion of \emph{elementarily dual} definable subcategories that the reader is assumed to be acquainted with (cf.\ \cite[\S3.4.2]{P2}).
Recall from \cite[\S 2.5]{MLII}, that $\D\cK$ denotes the class of all character duals $(K, \mathbb Q/\mathbb Z)$ of $K\in\cK$. The definable subcategory $\langle\D\cK\rangle$ turns out to be elementarily dual to $\langle\cK\rangle$, see \cite{ZZ-H}, \cite{PRZ2} or simply \cite[Cor.3.4.17]{P2}.  

In \cite[Convention 2.7]{MLII} it was stipulated that the classes $\cK$ and $\cL$ were \emph{definably dual} in the sense that the definable subcategories they generate, $\langle\cK\rangle$ and $\langle\cL\rangle$, are elementarily dual.
While no such convention will be made now, as definable subcategories play a different role here, we will freely use definably dual classes in the sense just specified.

\section{Purity}

\subsection{Relativized purity}
The first part of the following definition is discussed in great detail in \cite[\S 5.1]{MLII}. The second is its natural dual, but seems not to have been discussed much. (A stronger version of it occurs in  \cite[\S 5.4]{MLII}.)

\begin{definition}\text{}
 \begin{enumerate}
 \item The map $f: A \to B$ is an \emph{$\cL$-pure monomorphism} if for every tuple $\br a$ in $A$ and every pp formula $\phi$  its image satisfies in $B$, there is a pp formula $\psi\leq_\cL\phi$ \emph{it} satisfies in $A$.
 A submodule $A$ of a module $B$ is \emph{locally $\cL$-pure} if the identical inclusion is.

 \item The map $g: B \to C$ is an \emph{$\cL$-pure epimorphism} if for every tuple $\br c$ in $C$ and every pp formula $\phi$ it satisfies, there is a $g$-preimage $\br b$ in $B$ and a pp formula $\psi\leq_\cL\phi$ \emph{it} satisfies.
\end{enumerate}
\end{definition}

As the partial order does not change when passing from $\cL$ to the definable subcategory it generates, $\cL$-pure and $\langle\cL\rangle$-pure are the same. Beware, $\cL$-pure monomorphisms may not be monomorphisms. E.g., in the category of abelian groups the zero map on the group of two elements is an $\cL$-pure monomorphism for $\cL$, the class of torsionfree abelian groups.


\begin{rem}\label{LpureInL} If the source of any of these maps is in $\langle\cL\rangle$, the two formulas are equivalent in that source, which yields the usual purity (where $\psi$ can be taken to be $\phi$). In particular,
  \begin{enumerate} [\upshape (a)]
 \item $\cL$-pure submodules that are  in $\langle\cL\rangle$ are pure submodules,
  \item $\cL$-pure epimorphic images of modules  in $\langle\cL\rangle$ are pure epimorphic images.  
 \end{enumerate}
\end{rem}

\subsection{Local splitting}\label{locsplitdef}

Recall  Azumaya's notions,  \cite{Azu}, \cite{Azu2}: a monomorphism $f: N \hookrightarrow M$ is called \emph{locally split} if for every tuple $\br n$ in $N$ there is a \emph{local retract}, i.e., a homomorphism $g_{\br n}: M \to N$ with $g_{\br n}f(\br n)=\br n$. As usual, one calls a submodule $N$ of a module $M$ \emph{locally split}  if the identical embedding is locally split. (Notice, this makes $f$ a monomorphism anyway.) 

For completeness, let me mention that, dually, an epimorphism $g: N\twoheadrightarrow M$ is called \emph{locally split} if for every tuple $\br m$ in $M$ there is a \emph{local section}, i.e., a homomorphism $h_{\br m}: M \to N$ with $gh_{\br m}(\br m)=\br m$. (Again, this makes $g$ an epimorphism anyway.) 

\begin{rem}
 Clearly, in both definitions, the tuples $\br n$ and $f(\br n)$, respectively $\br m$ and $h_{\br m}(\br m)$, have the same pp type. Hence locally split monomorphisms (resp., locally split epimorphisms) are pure.
\end{rem}

It is not surprising that in order for  a monomorphism to be locally split it suffices to check the definition on a generating subset:


\begin{rem}
Suppose $N$ is generated by the set $C\subseteq N$ and $f: N\to M$ is a map such that every tuple $\br c$ in $C$ has a local retract, i.e., a homomorphism $g_{\br c}: M \to N$ with $g_{\br c}h(\br c)=\br c$. 
 Then  $f$ is a locally split monomorphism.
\end{rem}

\subsection{Relativized local splitting}
%

Just as we were naturally led to weakenings of purity in \cite{Tf} and \cite{MLII}, we are here led to weakenings of local splitting by the investigations of  how much one can enlarge a class $\cL$ without disturbing the property of being strict $\cL$-atomic (or  strict $\cK$-Mittag-Leffler), see \S\ref{target} below, the only section where this is used.

\begin{definition} Suppose $\cP$ is a class of (left $R$-) modules.

 A homomorphism $f: A \to B$ is \emph{locally $\cP$-split}\/ if for every tuple $\br a$ from $A$ and every tuple $\br p$ from some $P\in\cP$, every map $g: (P, \br p) \to (B, f(\br a))$  factors through $f$ so that $\br p$ goes to $\br a$. More precisely, for every  
such $g: (P, \br p) \to (B, f(\br a))$ there is $h:  (P, \br p) \to (A, \br a)$ such that $g(\br p)= fh(\br p)$.

 A \emph{locally $\cP$-split source} of $B$ is a module $A$ for which  there is such a locally $\cP$-split homomorphism $f: A \to B$.
 A submodule $A$ of a module $B$ is \emph{locally $\cP$-split} if the identical inclusion is.
\end{definition}

\begin{rem} Just as $\RMod$-purity was the usual purity, $\RMod$-locally split submodules are the usual locally split submodules. Further, the ($\Rmod$)-locally split submodules are precisely the pure submodules. Thus, so long as  $\Rmod\subseteq\cP$, all $\cP$-locally split submodules are  pure submodules.

\end{rem}

%
%
%
%

\section{Strict Mittag-Leffler and atomic modules}

\subsection{Free realizations of formulas}
\begin{definition}\label{Ffree} \text{}

\begin{enumerate}    
\item
\label{F-free} \cite[Def.2.2(1)]{Tf}   A \emph{free realization  for $\cL$}  of a pp formula $\phi$, or an  \emph{$\cL$-free realization of $\phi$}, is a pointed module $(M, \bar{m})$ such that  $\bar{m}$ is a (matching) tuple satisfying $\phi$ in $M$ and, whenever a tuple $\bar{c}$ satisfies $\phi$ in a module $L\in \cL$, then there is a map  $M\to L$ 
 sending $\bar{m}$ to $\bar{c}$.\footnote{This differs from the definition made in \cite[\S 1.4]{HR} in that $M$ is not required to be a member of $\cL$. Both differ from Prest's original definition in that the requirement on the underlying module of being finitely presented is also abandoned.}
 \item $M$ is said to be a \emph{strict $\cL$-atomic} if every tuple in $M$ freely realizes some pp formula for $\cL$. 
\item We omit the attribute $\cL$ if it is all of $\RMod$.
\end{enumerate}
\end{definition}

Note that every strict $\cL$-atomic module is $\cL$-atomic in the sense of \cite{habil} (or \cite{MLII})---as it should be. For, if $(M, \bar{m})$ freely realizes $\phi$ for $\cL$ and $\psi\in\pp_M(\br m)$, then, in any $L\in\cL$, if $(L, \bar{a})$ satisfies $\phi$, there is a map $(M, \bar{m})\to (L, \bar{a})$, hence $(L, \bar{a})$ satisfies also $\psi$. Consequently, $\phi$ $\cL$-generates $\pp_M(\br m)$. This property for every $\br m$ in $M$ is what is called $\cL$-atomicity. 
Now, the main theorem of  \cite{habil} (or \cite{MLII}) says that in case the classes $\cK$ and $\cL$ are definably dual in the sense specified in the preliminaries above (which, recall, means that the definable subcategories they generate  are elementarily dual in the sense of  \cite{Her}), the two properties, $\cL$-atomicity and that of being $\cK$-Mittag-Leffler, are the same---and so we can take it as a definition of $\cK$-Mittag-Leffler. Similarly, \cite[Prop.\,2.1]{GIRT} established that the two properties of strict atomicity (i.e., strict $\RMod$-atomicity in the sense just introduced) and of being strict Mittag-Leffler (in the sense of \cite{RG}) are the same. So it seems only legitimate to extend this to arbitrary definable subcategories and make it a definition, redundant as it may seem. 

\begin{definition}\label{sKML} Given a definable subcategory $\cK$ of right $R$-modules, a module is said to be a \emph{strict $\cK$-Mittag-Leffler} if it is strict $\cL$-atomic for $\cL$, the definable subcategory of left modules elementarily dual to $\cK$ in the sense of Herzog (see Preliminaries). 
%
%
%
%
\end{definition}

We will still prefer the terms (strict) $\cL$-atomic over their Mittag-Leffler version---for three reasons: first it allows us to stay on the same side of the ring, second, and more importantly, it makes available our main techniques involving pp formulas, third, and most importantly, it depends here on how we apply elementary duality to a class of modules, as the concepts may no longer be invariant under definable subcategories.  At the same time though, we want to keep in mind  their connotations `on the other side:' as strict $\cK$-Mittag-Leffler modules---provided $\cK$ and $\cL$ are definable subcategories that are elementarily dual.

\begin{rem}
 The strict Mittag-Leffler modules of \cite{RG} are  the strict $\cK$-Mittag-Leffler modules for $\cK = \ModR$ (i.e.,  the strictly $\cL$-atomic modules for $\cL = \RMod$), \cite[Prop.\,2.1]{GIRT}, or, equivalently,   \cite{Azu2}, the locally pure-projective modules, i.e., the modules such that every pure epimorphism onto them locally splits in the sense of \S\ref{locsplitdef}. 
\end{rem}

It is not hard to see that within a definable subcategory $\cD$, for a $\cD$-atomic module to have the strict property it suffices to check it for single modules in $\cD$ one by one:

\begin{lem}\label{onebyone}
 Let $M$ be an $\cL$-atomic module and $\cL'=\bigcup_{i\in I} \cL_i\subseteq\langle\cL\rangle$.
 
 If $M$ is strict $\cL_i$-atomic for every $i\in I$, then it is strict $\cL'$-atomic.
\end{lem}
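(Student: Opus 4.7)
My plan is to exhibit, for each tuple $\bar m$ in $M$, a single pp formula that serves as a free realization for all of $\cL'$ at once. The naive candidate — take a formula $\phi_i$ that freely realizes $\bar m$ for some particular $\cL_i$ — fails because $\phi_i$ may depend on $i$. The fix is to use $\cL$-atomicity of $M$ to produce a uniform formula and then exploit strict $\cL_i$-atomicity only to obtain the required factoring maps.

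So, fix a tuple $\bar m$ in $M$. First I would invoke $\cL$-atomicity to pick a pp formula $\phi$ that $\cL$-generates $\pp_M(\bar m)$; since the partial order $\leq_{\cL}$ coincides with $\leq_{\langle\cL\rangle}$, and $\cL' \subseteq \langle\cL\rangle$, this same $\phi$ also $\cL'$-generates $\pp_M(\bar m)$, i.e.\ $\phi \leq_{\cL_i}\psi$ for every $\psi \in \pp_M(\bar m)$ and every $i \in I$. In particular $(M,\bar m) \models \phi$, so $\phi$ is at least a candidate free realization.

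Next I would verify the free realization clause for $\phi$ with respect to $\cL'$. Take any $L'\in\cL'$ and any tuple $\bar c$ in $L'$ with $L' \models \phi(\bar c)$; say $L'\in\cL_i$. Because $\phi \leq_{\cL_i}\psi$ for every $\psi\in\pp_M(\bar m)$ and $L' \in \cL_i\subseteq\langle\cL\rangle$, the implication $\phi\to\psi$ holds in $L'$, so $\bar c$ satisfies every formula of $\pp_M(\bar m)$ in $L'$, i.e.\ $\pp_M(\bar m)\subseteq\pp_{L'}(\bar c)$. Now apply strict $\cL_i$-atomicity: there is a pp formula $\phi_i\in\pp_M(\bar m)$ such that $(M,\bar m)$ freely realizes $\phi_i$ for $\cL_i$. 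Since $\phi_i\in\pp_M(\bar m)\subseteq\pp_{L'}(\bar c)$, we get $L' \models \phi_i(\bar c)$, and free realization of $\phi_i$ for $\cL_i$ (with $L'\in\cL_i$) yields a map $M\to L'$ sending $\bar m$ to $\bar c$, as required.

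The whole argument is short; the only conceptual point I expect to need to flag carefully is why the uniform formula must come from $\cL$-atomicity rather than from the strict hypothesis — i.e., the observation that strict $\cL_i$-atomicity by itself only delivers a formula dependent on $i$, whereas the free realization property needs to hold simultaneously across all $\cL_i$. Everything else is bookkeeping with $\leq_{\cL}$ and pp types, using the hypothesis $\cL'\subseteq\langle\cL\rangle$ to transfer the generation property across the classes.
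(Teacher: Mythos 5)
Your proposal is correct and is essentially the paper's own argument: both extract a uniform generator $\phi$ of $\pp_M(\bar m)$ from $\cL$-atomicity, transfer it along $\cL_i\subseteq\langle\cL\rangle$, and then use the $i$-dependent formulas $\phi_i$ freely realized for $\cL_i$ to produce the required maps (the paper phrases this as $\phi$ and $\phi_i$ being $\cL_i$-equivalent, so that $\phi$ itself is freely realized for each $\cL_i$; you verify the factoring clause directly, which amounts to the same thing). No gaps.
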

\begin{proof}
 By hypothesis, every tuple $\br a$ in $M$ satisfies a pp formula $\phi$ that $\cL$-generates---hence also $\langle\cL\rangle$-generates---its type $\pp_M(\br a)$, and there are $\phi_i$ in that type that $\br a$ freely realizes for $\cL_i$, $i\in I$. As mentioned before, this implies that $\phi_i\leq_{\cL_i}\phi$. But $\cL_i\subseteq\langle\cL\rangle$ implies also $\phi\leq_{\cL_i}\phi_i$, which shows that we may take $\phi$ for all the $\phi_i$. Consequently, $(M, \br a)$ freely realizes $\phi$ for $\cL'$, as required.
\end{proof}

\smallskip
\subsection{Pure submodules of atomic modules}
It is clear from the definition that the class of strict $\cL$-atomic modules is closed under arbitrary direct sum and pure submodules. Here is a refinement of the latter.

\begin{lem}\label{Lpureofstrict} If $f: N \to M$ is an $\cL$-pure monomorphism with $M$ strict $\cL$-atomic, then $N$ is strict $\cL$-atomic. 

In particular,
$\cL$-pure submodules of strict $\cL$-atomic modules are strict $\cL$-atomic. 
(Without `strict' this is \cite[Cor.\,6.3]{MLII}.)
\end{lem}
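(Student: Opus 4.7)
The plan is a direct chase through the definitions. Take any tuple $\br a$ in $N$; I want to find a pp formula $\psi$ freely realized by $(N, \br a)$ for $\cL$.

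First, push $\br a$ into $M$: since $M$ is strict $\cL$-atomic, the tuple $f(\br a)$ freely realizes some pp formula $\phi$ for $\cL$; in particular $\phi \in \pp_M(f(\br a))$. Next, invoke $\cL$-purity of $f$: applied to $\br a$ and $\phi$, it yields a pp formula $\psi \leq_\cL \phi$ with $\psi \in \pp_N(\br a)$. The candidate is this $\psi$.

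To check free realization, pick $L \in \cL$ and a tuple $\br c$ in $L$ satisfying $\psi$. Since $\psi \leq_\cL \phi$ and $L \in \cL$, the tuple $\br c$ also satisfies $\phi$ in $L$. Because $(M, f(\br a))$ freely realizes $\phi$ for $\cL$, there is a map $g: M \to L$ sending $f(\br a)$ to $\br c$. Composing with $f$ gives $gf: N \to L$ sending $\br a$ to $\br c$, which is exactly what is required to witness that $(N, \br a)$ freely realizes $\psi$ for $\cL$. Since $\br a$ was arbitrary, $N$ is strict $\cL$-atomic.

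The ``in particular'' part is immediate by applying this to the identical inclusion. There is no real obstacle here; the only point to be careful about is that the definition of $\cL$-pure monomorphism outputs a \emph{smaller} formula $\psi \leq_\cL \phi$ rather than equivalence, and one must verify that this direction of inequality is precisely the one needed to pass satisfaction of $\psi$ in $L \in \cL$ to satisfaction of $\phi$, so that the free-realization property of $\phi$ in $M$ can be invoked. This is exactly the way the inequality runs, so the argument goes through without friction.
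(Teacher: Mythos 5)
Your proof is correct and follows essentially the same route as the paper's: obtain $\phi$ freely realized by $(M,f(\br a))$, use $\cL$-purity to get $\psi\leq_\cL\phi$ in $\pp_N(\br a)$, and compose the map $(M,f(\br a))\to(L,\br c)$ with $f$. Your closing remark correctly identifies the one direction of the inequality that matters (the paper phrases it as $\cL$-equivalence, but only that direction is used).
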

\begin{proof} For notational simplicity, assume $N$ is an $\cL$-pure submodule of the  strict $\cL$-atomic module $M$ and $f$ is the identical inclusion.
 Let $\br n$ be a tuple in   $N$. Then $(M, f(\br n))$ freely realizes some formula $\phi$ for $\cL$ which is $\cL$-equivalent to some $\psi$ in $\pp_N(\br n)$. To see that $(N,\br n)$ freely realizes  $\psi$ for $\cL$, consider $L\in\cL$ and a tuple $\br c$ therein that satisfies $\psi$. By $\cL$-equivalence, it also satisfies $\phi$, whence we have a map $(M, f(\br n))\to (L, \br c)$. This yields  the desired map $(N, \br n)\to (L, \br c)$.
\end{proof}

\begin{lem}\label{locsplitsub}
 Suppose $f: N \to M$ is an $\cL$-pure monomorphism with $M$ strict $\cL$-atomic.
 If $N$ is itself in $\cL$, then $f$ is a locally split (and hence pure\footnote{It's pure anyway, see Remark \ref{LpureInL}.}) monomorphism. If $N$ is, in addition, finitely generated, $f$ splits.
 
 In particular,
$\cL$-pure submodules of strict $\cL$-atomic modules that are themselves in $\cL$ are locally split (and hence pure)
 submodules. If they are, in addition, finitely generated, they are even direct summands.
 \end{lem}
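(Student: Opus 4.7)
The plan is to exploit the free realization property directly, but first upgrading $\cL$-purity to ordinary purity using the hypothesis that $N$ itself lies in $\cL$.

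First I would note that, since $N \in \cL \subseteq \langle\cL\rangle$, Remark \ref{LpureInL}(a) immediately upgrades the $\cL$-pure monomorphism $f$ to an ordinary pure monomorphism. In particular, for every tuple $\br n$ in $N$, the tuples $\br n$ and $f(\br n)$ have identical pp types (in $N$ and $M$, respectively). This settles the parenthetical ``and hence pure'' claim and also hands us the key fact that every pp formula satisfied by $f(\br n)$ in $M$ is already satisfied by $\br n$ in $N$.

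Next I would construct the local retracts. Fix a tuple $\br n$ in $N$. Since $M$ is strict $\cL$-atomic, $(M, f(\br n))$ is a free realization for $\cL$ of some pp formula $\phi$. By the first step, $\br n$ satisfies $\phi$ in $N$; since $N\in\cL$, the defining property of a free realization for $\cL$ (Definition \ref{Ffree}(\ref{F-free})) yields a map $g_{\br n} : M \to N$ with $g_{\br n}(f(\br n)) = \br n$. This is exactly a local retract for $\br n$, so $f$ is a locally split monomorphism.

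For the second assertion, suppose $N$ is finitely generated and let $\br n$ be a generating tuple of $N$. The local retract $g_{\br n}$ just constructed satisfies $g_{\br n}\circ f(\br n) = \br n$, and since $\br n$ generates $N$, the endomorphism $g_{\br n}\circ f$ of $N$ is the identity on a generating set and therefore equals $\mathrm{id}_N$. Hence $f$ splits.

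The ``in particular'' clause is immediate upon applying the above to the identical inclusion. I do not anticipate a real obstacle here: the whole argument is a direct unwinding of definitions, and the only substantive observation is that the hypothesis $N\in\cL$ lets Remark \ref{LpureInL}(a) trade $\cL$-purity for full purity, which is precisely what makes $\br n$ satisfy $\phi$ in $N$ so that the free realization property applies.
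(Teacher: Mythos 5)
Your proof is correct and follows essentially the same route as the paper: use strict $\cL$-atomicity to get a pp formula $\phi$ freely realized by $(M,f(\br n))$ for $\cL$, use $N\in\cL$ together with $\cL$-purity to see that $\br n$ satisfies $\phi$ in $N$, and then invoke the free-realization property to produce the local retract. Your explicit treatment of the finitely generated case (a local retract on a generating tuple is a genuine retraction) is the standard completion that the paper leaves implicit.
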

\begin{proof}
Let $\br n$ be any tuple in $N$ and let $(M, f(\br n))$ $\cL$-freely realize the pp formula $\phi$.
 As  $N\in\cL$, by $\cL$-purity, $\phi$ is contained in the type of  $\br n$ in $N$, and so we have $(M, \br n)\to (N, \br n)$, which is the desired local  retract for ${\br n}$. \end{proof}

\subsection{Closure of the target class}\label{target} First a crucial observation about pure-injectives provided by the referee, which culminates in an example showing that, in contrast to $\cL$-atomicity (and $\cK$-Mittag-Leffler modules), which depends only on the definable subcategory $\cL$ (or $\cK$) generates, strict $\cL$-atomicity is, in general, different from $\langle\cL\rangle$-atomicity.

\begin{rem}\label{ref}\text{}

\begin{enumerate}[\upshape (a)]
 \item If $\cL$ consists of pure-injective modules only, $\cL$-atomic implies strict $\cL$-atomic. 
 For, if the pp type $\pp_M(\br a)$ in $M$ is $\cL$-generated by $\phi$ which $\br b$ in $L\in\cL$ satisfies, then $\pp_M(\br a)\subseteq\pp_L(\br b)$, hence pure injectivity of $L$ guarantees a map $(M, \br a) \to (L, \br b)$, as desired (see e.g.\ \cite[Thm.4.3.9]{P2}).
 \item If $\cL\subseteq\cL'\subseteq\langle\cL\rangle$ are such that $\cL'\setminus \cL$ consists of pure-injectives only, then every strict $\cL$-atomic module is  strict $\cL'$-atomic. 
 
 This follows from the implications 
strict $\cL$-atomic  $\Longrightarrow$ $\cL$-atomic $\Longrightarrow$ $\langle\cL\rangle$-atomic $\Longrightarrow$ $\cL'\setminus \cL$-atomic $\Longrightarrow$ strict $\cL'\setminus \cL$-atomic (by (1)), which, together with strict $\cL$-atomicity,  yields strict $\cL'$-atomicity by Lemma \ref{onebyone}.

\item
Let $\cD$ be a definable subcategory and $\cL$ be the class of all of pure-injectives in $\cD$. If strict $\cL$-atomic implies strict $\langle\cL\rangle$-atomic, then $\cD$-atomic implies strict $\cD$-atomic.

This follows from the implications 
$\cD$-atomic  $\Longrightarrow$ $\cL$-atomic $\Longrightarrow$ strict $\cL$-atomic $\Longrightarrow$ strict $\langle\cL\rangle$-atomic $\Longleftrightarrow$ strict $\cD$-atomic, the second of which is  (1); for the last notice $\langle\cL\rangle=\cD$, for any module is in the same definable subcategories as is its pure-injective hull.

\item 
Consequently, every definable subcategory $\cD$ with a $\cD$-atomic member which is not strictly $\cD$-atomic gives rise to a subclass $\cL$ (namely, the class of all pure-injective members of $\cD$) such that strict $\langle\cL\rangle$-atomicity is stronger than strict $\cL$-atomicity.

\item As a concrete example, take $\cL$ to be the class of all pure-injective abelian groups. The definable subcategory it generates is the category of all abelian groups. Since there are Mittag-Leffler abelian groups that are not strict Mittag-Leffler, e.g.,
Example  \ref{counterex} below, not every strict $\cL$-atomic abelian group is strict $\langle\cL\rangle$-atomic (= strict Mittag-Leffler).
\end{enumerate}
\end{rem}

The obstacle in extending strict $\cL$-atomicity to strict $\langle\cL\rangle$-atomicity  is the closure of $\cL$ under pure submodules (note, \emph{every} module is pure in its pure-injective envelope). If we strengthen purity to a certain weak local splitting, however, we can prove this. Again, the additional clause about pure-injectives is due to the referee.

\begin{prop} Suppose $\cL$ is any class of modules and  $\cP$ is the class of strict $\cL$-atomic modules.  Let $\cL'$ be the union of $\cL$ and the class of all pure-injectives in $\langle\cL\rangle$, and let $\bar{\cL}$ be the  closure of $\cL'$ under  direct product, direct limit and local $\cP$-split sources (in particular, under local $\cP$-split submodules).

Then every strict $\cL$-atomic module is  strict $\bar{\cL}$-atomic.
\end{prop}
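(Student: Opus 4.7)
My plan is to fix a tuple $\bar m$ in a strict $\cL$-atomic module $M$, take a pp formula $\phi$ that $(M,\bar m)$ freely realizes for $\cL$, and prove that this \emph{same} $\phi$ is freely realized by $(M,\bar m)$ over all of $\bar{\cL}$. To this end, introduce the class
\[
\cS := \{N : \text{every tuple } \bar n \text{ in } N \text{ satisfying } \phi \text{ admits a morphism } (M,\bar m)\to(N,\bar n)\},
\]
and verify $\cL'\subseteq\cS$ together with the closure of $\cS$ under direct products, direct limits, and locally $\cP$-split sources. Since $\bar{\cL}$ is by definition the smallest class containing $\cL'$ closed under these three operations, this would force $\bar{\cL}\subseteq\cS$, and since $\bar m$ was arbitrary this delivers strict $\bar{\cL}$-atomicity of $M$.

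For the base step $\cL'\subseteq\cS$: Remark \ref{ref}(b) yields that $M$ is strict $\cL'$-atomic, so $(M,\bar m)$ freely realizes \emph{some} formula $\phi'$ for $\cL'$; because $\cL'\subseteq\langle\cL\rangle$, the very argument of Lemma \ref{onebyone} gives $\phi\equiv_{\cL'}\phi'$, letting $\phi$ inherit the free-realization property of $\phi'$. Closure of $\cS$ under direct products is by componentwise satisfaction of pp formulas: the maps into the factors assemble into a map into the product hitting a given satisfier of $\phi$. Closure under direct limits uses the standard fact that satisfaction of a pp formula in a colimit is witnessed at a sufficiently large stage; one factors the map through that stage and composes with the structural morphism into the colimit.

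The one step that does serious work, and where I expect the main obstacle to lie, is closure under locally $\cP$-split sources; it is precisely here that the hypothesis $M\in\cP$ is needed. Suppose $f:A\to N$ is locally $\cP$-split with $N\in\cS$ and let $\bar a$ in $A$ satisfy $\phi$. Then $f(\bar a)$ also satisfies $\phi$ in $N$, since $f$ is a homomorphism, so $N\in\cS$ produces a map $g:(M,\bar m)\to(N,f(\bar a))$. Now the defining property of locally $\cP$-split, invoked with $P:=M\in\cP$ and the tuples $\bar p:=\bar m$ and $\bar a$, applied to $g$, yields a lift $h:(M,\bar m)\to(A,\bar a)$ with $g(\bar m)=fh(\bar m)$. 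This $h$ is the morphism required to place $A$ in $\cS$, completing the closure argument and hence the proof.
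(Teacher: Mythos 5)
Your proof is correct and follows essentially the same route as the paper's: establish strict $\cL'$-atomicity first, then verify that the free-realization property of the chosen $\phi$ is preserved under direct products (pp formulas commute with products), direct limits (satisfaction is witnessed at a stage), and locally $\cP$-split sources (using $M\in\cP$ to factor the map through $f$). Your explicit class $\cS$ is merely a cleaner packaging of the same induction over the closure operations, and your base step (transporting free realization from some $\phi'$ for $\cL'$ back to the original $\phi$ via the argument of Lemma \ref{onebyone}) is a harmless variant of the paper's choice to work with a formula freely realized for $\cL'$ from the outset.
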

\begin{proof} Suppose $M$ is strict $\cL$-atomic, hence, by (2) of the previous remark, also strict $\cL'$-atomic. That this condition is preserved in direct products of the target modules follows easily from the fact that pp formulas are p-functors, i.e., commute with direct product. Namely, let $\br m \in\phi(M)$ freely realize a pp formula $\phi$ for $\cL'$. If $N$ is the product of some $L_i\in\cL'$ and a tuple $\br n$ satisfies $\phi$ in $N$, then each coordinate of $\br n$ in $L_i$, call it $\br l_i$, satisfies $\phi$.  So there are maps $f_i: (M, \br m) \to (L_i, \br l_i)$,  whose product sends $(M, \br m)$ to $(N, \br n)$, as desired.

To show that this condition is preserved in  direct limits, suppose  the $L_i$ form a directed system with direct limit $N$ and the tuple $\br n$ satisfies  $\phi$ in $N$.  This is possible only if $\phi$ was true along some tail of $\br n$ in the direct system. In particular, there is a preimage $\br l_i$ of $\br n$, under a canonical map, which satisfies $\phi$. By hypothesis, there is a map $(M, \br m)\to (L_i, \br l_i)$, which, composed with that canonical map, yields a map $(M, \br m)\to (N, \br n)$, as desired.

Finally, suppose $f: N\to L$ is  locally $\cP$-split, $L\in\cL'$, and $(N, \br n)$ satisfies $\phi$. Being existential, $\phi$ is also satisfied in $(L, f(\br n))$. By hypothesis on $M$, there is a map $(M, \br m)\to (L, f(\br n))$, which, by hypothesis on $N$, factors through $f$, and thus yields the desired map to $(N, \br n)$.
\end{proof}

A simple argument using $\Rmod\subseteq\cP$  shows that $\langle\cL\rangle$ is closed under local $\cP$-split sources. The other operations being part of the definition of definable subcategory, one infers that
 $\bar{\cL}$ is contained in the definable subcategory $\langle\cL\rangle$. It might be short of being the whole thing by leaving out some pure submodules that are not locally $\cP$-split. In fact, we have the following.

\begin{lem}
 Suppose $A$ is a pure submodule of some $L\in\cL$ such that every strict $\cL$-atomic module is also strict $A$-atomic.
 
 Then $A$ is locally $\cP$-split in $L$ (with $\cP$, the class of strict $\cL$-atomic modules).
\end{lem}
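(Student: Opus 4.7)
The plan is to unwind the definitions and use purity of $A$ in $L$ to transfer satisfaction of pp formulas from $L$ back into $A$, which is exactly what the strict $A$-atomicity hypothesis needs.

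More concretely, fix $P \in \cP$, a tuple $\br p$ in $P$, a tuple $\br a$ in $A$, and a map $g : (P, \br p) \to (L, \br a)$. I need to produce $h : (P, \br p) \to (A, \br a)$. By the standing hypothesis, the strict $\cL$-atomic module $P$ is also strict $A$-atomic, so $\br p$ freely realizes some pp formula $\phi$ for the one-element class $\{A\}$. Since $g$ is a homomorphism sending $\br p$ to $\br a$ and pp formulas are preserved by maps, $\br a$ satisfies $\phi$ in $L$. Now invoke purity of $A$ in $L$: the pp type of $\br a$ is the same computed in $A$ or in $L$, so $\br a$ satisfies $\phi$ in $A$. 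The free realization property of $(P, \br p)$ for $\{A\}$ then supplies a map $h : (P, \br p) \to (A, \br a)$, and $g(\br p) = \br a = h(\br p) = fh(\br p)$ since $f$ is the inclusion, verifying the defining condition of locally $\cP$-split.

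There is essentially no obstacle, which is the point: the lemma is a converse-flavored observation explaining why $\bar{\cL}$ falls short of $\langle\cL\rangle$ only by excluding pure submodules that fail this transfer-of-factorizations property. The only moving parts are (i) recognizing that one needs only a tuple-respecting map $h$, not a factorization of $g$ itself, and (ii) using pp-type invariance under pure embeddings—precisely so that the formula $\phi$ witnessed in $P$ and transported to $L$ by $g$ is seen back in $A$, where strict $A$-atomicity of $P$ can fire.
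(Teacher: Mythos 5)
Your proof is correct and follows essentially the same route as the paper's: use strict $A$-atomicity of $P$ to obtain a free realization at $\br p$, push it forward along $g$ into $L$, pull it back into $A$ by purity, and then invoke freeness to get the map $(P,\br p)\to(A,\br a)$. Your version is in fact slightly more streamlined---the paper first takes a formula $\phi$ freely realized for $\cL$, transfers it into $A$ by purity, and only then passes to the formula $\psi$ freely realized for $A$ via $\phi\leq_\cL\psi$ and $A\in\langle\cL\rangle$, whereas you apply the pp-preservation and purity argument directly to the $A$-free formula, which works just as well.
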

\begin{proof}
 Suppose  $A\subseteq L$ and $g$ is a map from $P\in\cP$ to $L$ with $g(\br p)=\br a$ for given tuples $\br a$ in $A$ and $\br p$ in $P$. As $P\in\cP$, the tuple $\br p$ freely realizes some pp formula $\phi$ for $\cL$  in $P$. As $g$ preserves pp formulas, $\br a$ satisfies $\phi$ in $L$, and, by purity, also in $A$.
 
  Since $P$ is strict $\cL$-atomic, by hypothesis, it is also strict  $A$-atomic, so $(P, \br p)$ must freely realize some pp formula $\psi$ for $A$. Then $\phi\leq_\cL\psi$, and as $A$ is in $\langle\cL\rangle$, the tuple $\br a$ satisfies $\psi$ in $A$, which yields the desired map $(P, \br p)\to (A, \br a)$.
 \end{proof}

We will return to this issue in \S\S\ref{ctbleLsep} and \ref{lastsect}.

%
%
%
%
%
%

\subsection{Countably generated atomic modules are strict}
 \cite[Prop.\,2.4]{Tf} says that every tuple in a countably generated $\cL$-atomic module  freely realizes some pp formula for $\cL$.  This is so basic a fact for this topic that  
 we restate it as follows and make use of it without much mention.

\begin{lem}[The `Strict' Lemma]\label{StrictLemma}\cite[Prop.\,2.4]{Tf} 

 Countably generated $\cL$-atomic modules are strict $\cL$-atomic. Consequently, countably generated strict $\cL$-atomic modules are  strict $\langle\cL\rangle$-atomic.
\end{lem}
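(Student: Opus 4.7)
The argument is a back-and-forth style construction made possible by countable generation. Fix a tuple $\bar m$ in $M$ and enumerate a generating set $m_0, m_1, m_2, \ldots$ of $M$ whose initial segment of length $k := |\bar m|$ consists of the entries of $\bar m$ itself. By $\cL$-atomicity, for each $n \geq k$ there is a pp formula $\phi_n(x_0, \ldots, x_{n-1})$ that lies in and $\cL$-generates $\pp_M(m_0, \ldots, m_{n-1})$. My claim is that $(M, \bar m)$ freely realizes $\phi_k$ for $\cL$.

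To verify the freeness, let $L \in \cL$ and suppose a tuple $\bar c = (c_0, \ldots, c_{k-1})$ in $L$ satisfies $\phi_k$. I recursively produce $c_k, c_{k+1}, \ldots$ in $L$ so that $(c_0, \ldots, c_{n-1})$ satisfies $\phi_n$ in $L$ at every stage $n \geq k$. Given the stage-$n$ tuple, observe that $\exists x_n\, \phi_{n+1}(x_0, \ldots, x_n)$ belongs to $\pp_M(m_0, \ldots, m_{n-1})$, whence by $\cL$-generation $\phi_n \leq_\cL \exists x_n\, \phi_{n+1}$. Since $L \in \cL$ and $(c_0, \ldots, c_{n-1}) \models \phi_n$, we obtain a witness $c_n \in L$ with $\phi_{n+1}(c_0, \ldots, c_n)$ holding in $L$.

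It remains to check that the assignment $m_i \mapsto c_i$ extends to a well-defined $R$-homomorphism $f: M \to L$ (which automatically sends $\bar m$ to $\bar c$). Any $R$-linear relation $\sum r_i m_i = 0$ holding in $M$ is, for large enough $n$, a pp formula in $\pp_M(m_0, \ldots, m_{n-1})$; hence it is $\cL$-implied by $\phi_n$, and therefore inherited by $(c_0, \ldots, c_{n-1})$ in $L$. So $f$ is a well-defined morphism, confirming that $(M, \bar m)$ is an $\cL$-free realization of $\phi_k$. Since $\bar m$ was arbitrary, $M$ is strict $\cL$-atomic. For the consequence, recall from the discussion after Definition \ref{Ffree} that strict $\cL$-atomic implies $\cL$-atomic, and $\cL$-atomicity depends only on $\langle\cL\rangle$; hence a countably generated strict $\cL$-atomic module is $\langle\cL\rangle$-atomic, and applying the first statement with $\langle\cL\rangle$ in place of $\cL$ yields strict $\langle\cL\rangle$-atomicity.

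The main obstacle is arranging the formulas $\phi_n$ so that the inductive extension step always admits a witness in $L$ and so that $R$-linear coherence of the $c_i$ is maintained. Both requirements are met simultaneously by a single feature of the setup: because each $\phi_n$ is an $\cL$-generator of the full pp type of the length-$n$ initial segment, it $\cL$-implies every pp consequence---including the existential projections used to produce the next witness and the linear relations needed for well-definedness---so no separate compatibility argument or coherent selection procedure is required.
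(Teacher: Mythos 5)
Your proof is correct and follows essentially the route the paper takes: the paper disposes of the first assertion by citing \cite[Prop.\,2.4]{Tf}, and your chain-of-generators argument (choosing $\cL$-generators $\phi_n$ of the pp types of the initial segments, using $\phi_n\leq_\cL\exists x_n\,\phi_{n+1}$ to extend the target tuple, and preserving the finitary linear relations to get a well-defined map) is exactly the standard argument behind that citation. Your reduction for the second assertion --- strict $\cL$-atomic $\Rightarrow$ $\cL$-atomic $\Rightarrow$ $\langle\cL\rangle$-atomic, then reapply the first part to $\langle\cL\rangle$ --- coincides with the paper's appeal to \cite[Cor.\,3.6(a)]{MLII}.
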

\begin{proof}
 The first part of the statement is \cite[Prop.\,2.4]{Tf}. Invoking \cite[Cor.\,3.6(a)]{MLII} saying that $\cL$-atomic is the same as $\langle\cL\rangle$-atomic, the second follows at once.
\end{proof}

\subsection{Traces}
The role of traces in this context has been made clear in
\cite{Z-H} and \cite{Gar}, see also \cite[Cor.\,1.2.17]{P2}. Given a tuple  $\br m$  in a module $M$, by the \emph{trace} of $\br m$ in a module $N$ we mean the set $(M, N)\,\br m := \{h(\br m)\, | \, h\in (M, N)\}$. By an \emph{$\cL$-trace} we mean a trace in a module from $\cL$. 

\begin{lem}\label{traces} 
$(M, \bar{m})$ is a free realization  for $\cL$ of the pp formula $\phi$ if and only if $\br m$ satisfies $\phi$ in $M$ and 
$\phi$ defines all $\cL$-traces of\/ $\br m$, i.e., $(M, L)\,\br m = \phi(L)$ for all $L\in\cL$.
\end{lem}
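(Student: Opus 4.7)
The plan is to unpack both sides of the biconditional directly from the definitions; this is really a rephrasing that amounts to observing that the trace $(M,L)\,\br m$ is by definition the set of images of $\br m$ under maps $M \to L$, while the free realization clause is a statement about the surjectivity of the natural ``evaluation at $\br m$'' map $\Hom(M,L) \to \phi(L)$. I expect no genuine obstacle; the only thing to keep track of is that pp formulas are preserved under homomorphisms (so the inclusion $(M,L)\,\br m \subseteq \phi(L)$ is automatic whenever $\br m$ satisfies $\phi$ in $M$).

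For the forward direction, assume $(M,\br m)$ is a free realization of $\phi$ for $\cL$ and fix $L \in \cL$. Since $\br m \in \phi(M)$ and pp formulas are preserved by homomorphisms, every $h(\br m)$ with $h \in (M,L)$ lies in $\phi(L)$, giving the inclusion $(M,L)\,\br m \subseteq \phi(L)$. Conversely, if $\br c \in \phi(L)$, the free realization property hands us a map $h : M \to L$ with $h(\br m) = \br c$, witnessing $\br c \in (M,L)\,\br m$. This establishes $(M,L)\,\br m = \phi(L)$.

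For the converse, assume $\br m \in \phi(M)$ and $(M,L)\,\br m = \phi(L)$ for every $L \in \cL$. To verify the free realization condition, pick any $L \in \cL$ and any $\br c \in L$ satisfying $\phi$, i.e., $\br c \in \phi(L)$. By the trace equality, $\br c \in (M,L)\,\br m$, so by the very definition of the trace there exists $h \in (M,L)$ with $h(\br m) = \br c$, which is exactly the required map $(M,\br m) \to (L,\br c)$. Hence $(M,\br m)$ is a free realization of $\phi$ for $\cL$, completing the proof.
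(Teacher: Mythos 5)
Your proof is correct and follows essentially the same route as the paper: one inclusion is automatic because homomorphisms preserve pp formulas, and the reverse inclusion is just a restatement of the free realization condition. Your version merely spells out both directions of the biconditional more explicitly than the paper's one-line argument.
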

\begin{proof} As morphisms preserve pp formulas, $\br m\in\phi(M)$ implies $(M, F)\,\br m \subseteq \phi(F)$, for any module $F$. The inverse inclusion holds if and only if $\br m$ can be mapped to every element of $\phi(F)$. 
\end{proof}

Note, if $M\in\cL$ then $\br m$ is in its own $\cL$-trace  and therefore automatically satisfies any pp formula that defines all its $\cL$-traces.

\begin{prop}\label{K-sML} 
The following are equivalent for any (left $R$-) module $M$.

\begin{enumerate} [\upshape (i)]
\item $M$ is strict $\cL$-atomic.
\item Every tuple $\br m$ in $M$ satisfies a pp formula that defines all traces of $\br m$ in modules from $\cL$.
\item Every tuple  $\br m$ in $M$ satisfies a pp formula $\phi$ that is freely realized in some module $N$ 
with $\phi(N)=(M, N)\,\br m$. 
\end{enumerate}
\end{prop}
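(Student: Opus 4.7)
The equivalence (i) $\Leftrightarrow$ (ii) is an immediate reformulation of Lemma \ref{traces}, which characterizes the $\cL$-free realizations $(M, \br m)$ of $\phi$ as exactly those pairs with $\br m \in \phi(M)$ and $(M, L)\br m = \phi(L)$ for every $L \in \cL$. Applying this tuple by tuple translates the strict $\cL$-atomicity clause (i) into the trace clause (ii), and conversely.

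For (iii) $\Rightarrow$ (i) I plan a short diagram chase. Let $(N, \br n)$ be a free realization of $\phi$ with $\phi(N) = (M, N)\br m$. Since $\br n$ satisfies $\phi$ in $N$, the inclusion $\br n \in \phi(N) = (M, N)\br m$ supplies a morphism $f \colon M \to N$ with $f(\br m) = \br n$. For any $L \in \cL$ and any $\br c \in \phi(L)$, the free realization property of $(N, \br n)$ supplies $g \colon N \to L$ sending $\br n$ to $\br c$; composing, $g \circ f \colon M \to L$ carries $\br m$ to $\br c$, so $\br c \in (M, L)\br m$. The reverse inclusion $(M, L)\br m \subseteq \phi(L)$ is automatic from pp-preservation, hence $\phi(L) = (M, L)\br m$ for every $L \in \cL$, and Lemma \ref{traces} yields that $(M, \br m)$ is an $\cL$-free realization of $\phi$, proving (i).

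For (i) $\Rightarrow$ (iii) the natural choice is $(N, \br n) := (M, \br m)$ itself, using the pp formula $\phi$ for which $(M, \br m)$ is an $\cL$-free realization by (i); this pair then qualifies as a free realization of $\phi$ in $N$, and the remaining clause $\phi(N) = (M, N)\br m$ reduces to $\phi(M) = (M, M)\br m$. The inclusion $(M, M)\br m \subseteq \phi(M)$ is automatic from pp-preservation, whereas the reverse inclusion $\phi(M) \subseteq (M, M)\br m$ is precisely the $L = M$ instance of the trace identity of Lemma \ref{traces}. I expect this to be the main technical point: it requires extending the $\cL$-free realization property of $(M, \br m)$ from targets in $\cL$ to the target $M$ itself, which is handled either via the fact that $M$ lies in the definable subcategory $\langle \cL \rangle$ or by a judicious adjustment of the witnessing $N$ (for instance replacing it by an $\cL$-free realization of $\phi$ sitting in $\cL$ itself, where the trace equality is covered directly by (ii)).
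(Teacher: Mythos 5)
Your handling of (i)$\Leftrightarrow$(ii) is exactly the paper's (both directions are immediate from Lemma \ref{traces}), and your composition argument for (iii)$\Rightarrow$(i) is correct and in fact supplies the detail the paper leaves implicit: the hypothesis $\br n\in\phi(N)=(M,N)\,\br m$ produces $f\colon(M,\br m)\to(N,\br n)$, and precomposing the free maps out of $(N,\br n)$ with $f$ makes $(M,\br m)$ as free a realization of $\phi$ as $(N,\br n)$ is.

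The genuine gap is in (i)$\Rightarrow$(iii), and while you have located the crux precisely, neither of your proposed repairs works as stated. The first fails twice over: nothing places $M$ (or the witness $N$) in $\langle\cL\rangle$, and even when it is there, strict $\cL$-atomicity does not upgrade to maps into arbitrary members of $\langle\cL\rangle$ --- that is the content of Remark \ref{ref}(e) and Example \ref{counterex}. Indeed, your own (iii)$\Rightarrow$(i) argument shows that if ``freely realized'' in (iii) is read unrelativized (as the paper's parenthetical ``(total)'' and its finitely presented witness suggest), then (iii) forces $(M,\br m)$ to be a \emph{total} free realization of $\phi$, so (iii) would characterize strict $\RMod$-atomicity for every $\cL$; this cannot coincide with (i) when, say, $\cL$ is the class of pure-injective abelian groups. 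So (iii) has to be read with $\cL$-free realizations, and then both $N=M$ (your choice) and $N$ finitely presented (the paper's) founder on the same point: one needs a map $(M,\br m)\to(N,\br n)$, which (ii) only delivers when $N\in\cL$. Your second suggestion is the right instinct but leaves the existence of a suitable $N$ unproved. A witness that does work is the product $N=\prod (L,\br c)$ taken over (a representative set of) pointed realizations $\br c\in\phi(L)$ with $L\in\cL$: the coordinate projections make $(N,\br n)$ an $\cL$-free realization of $\phi$, and since pp formulas and $\Hom(M,-)$ both commute with direct products, (ii) yields $\phi(N)=\prod\phi(L)=\prod(M,L)\,\br m=(M,N)\,\br m$. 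Be aware that the paper's own one-line justification of this direction does not address any of this, so on this point your attempt is no further from a complete proof than the original.
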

\begin{proof} (i) and (ii) are equivalent by the lemma. Since every pp formula has a (total) free realization somewhere (in some module $N$, even a finitely presented one), (iii) is equivalent as well.
%
\end{proof}

This allows us to extend a result known for pure-projective modules to strict Mittag-Leffler modules, cf.\ \cite[Cor.\,2.1.27]{P2}.

\begin{cor}\label{ppdef}
 Every finitely generated endosubmodule of a strict Mittag-Leffler  module is pp-definable.  
%
%
\end{cor}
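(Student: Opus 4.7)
The plan is to apply Proposition \ref{K-sML}(ii) one element at a time and then use the standard fact that sums of pp-definable subgroups are pp-definable. Recall that by Definition \ref{sKML}, a strict Mittag-Leffler module is strict $\cL$-atomic for $\cL=\RMod$, so Proposition \ref{K-sML} applies with $\cL=\RMod$ and $L = M$ is an admissible target.

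First I would apply condition (ii) of Proposition \ref{K-sML} to each single element $m\in M$, obtaining a unary pp formula $\phi_m(x)$ such that $\phi_m(N) = (M,N)\,m$ for every module $N$. Specializing to $N = M$ gives $\phi_m(M) = (M,M)\,m = \End(M)\,m$, which already pp-defines every cyclic endosubmodule of $M$. Then, given a finitely generated endosubmodule
$$N \;=\; \End(M)\,m_1 + \cdots + \End(M)\,m_n \;\subseteq\; M,$$
I would assemble the formula
$$\psi(x) \;:=\; \exists y_1\cdots\exists y_n\;\Bigl( x = y_1 + \cdots + y_n \,\wedge\, \bigwedge_{i=1}^n \phi_{m_i}(y_i) \Bigr),$$
which is pp (the pp fragment is closed under conjunction, existential quantification, and sums of terms) and satisfies $\psi(M) = N$ by construction.

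There is no real technical obstacle here; the entire content of the corollary is already packaged inside Proposition \ref{K-sML}. The one subtlety worth flagging is that one must reduce to \emph{single} generators rather than applying Proposition \ref{K-sML}(ii) to the whole tuple $(m_1,\ldots,m_n)$ at once: doing the latter would only pp-define the diagonal trace $\{(h(m_1),\ldots,h(m_n)) : h\in\End M\}\subseteq M^n$, which is not the same object as the sum $\sum_i \End(M)\,m_i\subseteq M$. Working element by element and combining through the sum formula above sidesteps this and delivers the result.
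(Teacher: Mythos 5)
Your proof is correct and is essentially the paper's own argument: the paper likewise observes that the endosubmodule generated by $m_1,\dots,m_n$ is the finite sum of the traces $(M,M)\,m_i$, each pp-definable by Lemma \ref{traces}/Proposition \ref{K-sML} applied to single elements with $\cL=\RMod$, and that a finite sum of pp-definable subgroups is pp-definable. The subtlety you flag (elements versus tuples) is exactly why the paper works with the unary traces $(M,M)\,m_i$ and only afterwards remarks separately on the tuple case in $M^k$.
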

 \begin{proof}
 Let $M$ be strict Mittag-Leffler, that is, strict ($\RMod$-) atomic. An endosubmodule generated by elements $m_i\in M$ ($i\in I$) is the sum of all the traces $(\End M)\,m_i = (M, M)\,m_i$. These being pp-definable, their sum is too (so long as $I$ is finite).
\end{proof}

Clearly, the same is true for finite sums of traces of $k$-tuples $(M, M)\,\br m_i$, so that also finitely generated submodules of $M^k$---regarded as a module under the diagonal action of $\End M$ again---are pp-definable (in $M$). 

\begin{rem}
 Every finitely generated endosubmodule of a module $M$ is pp-definable if and only if $M$ is strict $M$-atomic, for the finitely generated endosubmodules of $M$ are precisely the traces under $\End M = (M, M)$ of the generator tuples.
\end{rem}

\subsection{Strict $_R\sharp$-atomic modules} Over left coherent rings, where $\flat_R$ and $_R\sharp$ form definable subcategories,  \cite[Thm.3.4.24]{P2}, which are elementarily dual, these are, by definition, the strict $\flat_R$-Mittag-Leffler modules. However, we prefer to work with $_R\sharp$-atomic modules, which make sense over any ring. (So does the duality of $\flat_R$ and $_R\sharp$, namely, Herzog showed that these classes are what we call definably dual, i.e., they generate elementarily dual subcategories \cite[\S12]{Her}, cf.\ \cite[Prop.3.4.26]{P2}.)

It has been known since \cite{Goo} that all modules are $R_R$-Mittag-Leffler (equivalently, $\flat_R$-Mittag-Leffler, or $_R\sharp$-atomic)  if and only if $R$ is left noetherian, cf.\ also \cite[Cor.\,4.3]{MLII}. In our terminology, $R$ is left noetherian if and only if every module is $_R\sharp$-atomic. We are going to strengthen this by showing that then all modules are even strict $_R\sharp$-atomic. First a general observation, whose original proof became obsolete with the referee's Remark \ref{ref}(1) above.

\begin{rem}
 $R_R$-Mittag-Leffler (= $_R\sharp$-atomic) modules are strict $\RInj$-atomic.
\end{rem}
%
%

\begin{prop}
 A ring $R$ is  left noetherian if and only if  every module is  strict $_R\sharp$-atomic.
 
  In particular, every abelian group has this property.
\end{prop}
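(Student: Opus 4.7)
The plan is to derive both directions from the previously recalled characterization of left noetherianness via $_R\sharp$-atomicity, combined with the remark immediately preceding the proposition that upgrades $_R\sharp$-atomic to strict $\RInj$-atomic.

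For the nontrivial direction, assume $R$ is left noetherian. By \cite[Cor.\,4.3]{MLII} (as restated in the text), every module is $_R\sharp$-atomic, i.e.\ $R_R$-Mittag-Leffler. The preceding Remark then gives that every module is strict $\RInj$-atomic. The step I would single out is this: over a left noetherian ring, every absolutely pure (fp-injective) module is injective, so the classes $\RInj$ and $_R\sharp$ coincide on the nose (not merely up to generated definable subcategory). Because strict $\cL$-atomicity depends on the class $\cL$ itself and not only on $\langle\cL\rangle$ (this is the main point of \S\ref{target}), one genuinely needs the classes to be literally equal here; noetherianness delivers exactly this. Hence strict $\RInj$-atomic is the same property as strict $_R\sharp$-atomic, and the conclusion follows.

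For the converse, simply observe that strict $_R\sharp$-atomic implies $_R\sharp$-atomic (by the remark right after Definition \ref{Ffree}, a strict $\cL$-atomic module is always $\cL$-atomic). So if every module is strict $_R\sharp$-atomic, every module is $_R\sharp$-atomic, and \cite[Cor.\,4.3]{MLII} gives back that $R$ is left noetherian.

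The ``in particular'' follows since $\mathbb{Z}$ is noetherian. The main (minor) obstacle to flag is the side remark that one is using the literal equality $\RInj = {}_R\sharp$ over left noetherian rings, rather than merely the equality of the definable subcategories they generate, because, as the referee's example in Remark \ref{ref} shows, strict atomicity is not preserved when enlarging $\cL$ within its definable subcategory in general.
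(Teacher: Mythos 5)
Your proof is correct and follows essentially the same route as the paper: the forward direction combines the fact that over a left noetherian ring every module is $_R\sharp$-atomic with the preceding remark and the identity $_R\sharp=\RInj$ (absolutely pure $=$ injective in the noetherian case), and the converse reduces to plain $_R\sharp$-atomicity. Your explicit flag that one needs the \emph{literal} equality of the classes $\RInj$ and $_R\sharp$, not merely of the definable subcategories they generate, is exactly the point the paper uses tacitly when it writes ``$_R\sharp=\RInj$'' in the noetherian case, and is a worthwhile clarification given Remark \ref{ref}.
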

\begin{proof}
%
 In the noetherian case, $_R\sharp=\RInj$, so the implication from left to right follows from the remark (and the fact that then every module is $_R\sharp$-atomic). As mentioned, for the  converse it suffices to have all modules (plain) $_R\sharp$-atomic (= $R_R$-Mittag-Leffler).
\end{proof}

\begin{exam}
 By \cite[Prop.\,7]{AF}, an abelian group $M$ is Mittag-Leffler iff it has trivial first Ulm group (i.e., the intersection of all $nM$ is $0$) and $M/\tor M$ is Mittag-Leffler. Hence no divisible group is Mittag-Leffler. But all of them are strict $_R\sharp$-atomic.
\end{exam}

%
%
%
%
%
%
%
\subsection{Strict $_R\flat$-atomic modules} Dually to what was said at the beginning of the previous section, now it is the right coherent rings over which these are the strict $\sharp_R$-Mittag-Leffler modules. But again, we wish to work with arbitrary rings and stick to  strict $_R\flat$-atomic modules instead. 

\begin{exam} (Of strict $_R\flat$-atomic abelian groups that are not Mittag-Leffler.)
 By \cite[Prop.\,5.1]{Tf}, every torsion abelian group is $_R\flat$-atomic---in fact, by \cite[Thm.\,6.8 or Cor.\,6.12]{Tf}, every group $M$ with $M/\tor(M)$ Mittag-Leffler is.
 Hence every countable such group is even strict $_R\flat$-atomic, cf.\ Lemma \ref{StrictLemma}. Now one gets a host of such groups which are not Mittag-Leffler from \cite[Prop.\,7]{AF}, see the previous Example. Namely,   take any countable torsion group with non-trivial first Ulm group, like the Pr\"ufer groups.
\end{exam}

In contrast, flat strict $_R\flat$-atomic modules turn out to be always strict atomic (i.e., strict Mittag-Leffler), as will be shown in Corollary \ref{flatISstrict} below. More  will be said in the introduction to \S\ref{flatcase} as a consequence  of some separation results for purely generated classes.

\section{Pure generation}\label{puregen}
 
\begin{definition}\cite[\S 2]{HR}\label{pgen}
\begin{enumerate}
 \item  A module  is \emph{purely generated} by a class $\cB$ if it is a pure epimorphic image of a direct sum of modules from $\cB$. A class is  \emph{purely generated by  $\cB$} if every module in it is. The class of \emph{all}\/ modules purely generated by  $\cB$ is denoted $\PGen\cB$.
 \item Following \cite[\S 2]{HR},  we let $\cC=\Add\cB$  and  $\tC$ be the class of all pure epimorphic images of modules from $\cC$ (equivalently, from $\oplus\cB$). So  $\tC$ is a shorthand for 
 $\PGen\cB$.

\item  \cite[Rem.\,2.4(a)]{HR}. A class is \emph{purely resolving} if it is purely generated by  pure-projective modules. 
\item For the ease of exposition, we always assume $\cB$ to be closed under finite direct sum.
\end{enumerate}
\end{definition}

Note, a class is pure resolving whenever it is purely generated by  \emph{locally} pure-projective  modules, \cite[Cor.\,2.5]{HR}, which means we may allow $\cB$ to consist of strict Mittag-Leffler modules and still have $\tC=\PGen\cB$ purely resolving.

\subsection{Basic properties}

\begin{prop}
Suppose $M$ is a pure-epimorphic image of a strict $\cG$-atomic module $L\in \cL$.
 
 If a tuple $\br m$ in $M$ freely realizes some pp formula for $\cL$, then it  freely realizes also some (possibly different) pp formula for $\cG$.
 
 Consequently, if $M$ is strict $\cL$-atomic,  it is also  strict $\cG$-atomic.
\end{prop}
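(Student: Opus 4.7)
The plan is to convert the free realization on $M$ back to the source $L$ via the pure epimorphism, exploit strict $\cG$-atomicity there, and then transport back to $M$ using a retraction that comes from $\cL$-free realization.

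First I would fix a tuple $\br m$ in $M$ that $\cL$-freely realizes a pp formula $\phi$, and let $\pi\colon L\twoheadrightarrow M$ be the given pure epimorphism. Since $\pi$ is a pure epimorphism and $\br m\in\phi(M)$, one can lift to some $\br l\in\phi(L)$ with $\pi(\br l)=\br m$. Now $L$ itself lies in $\cL$ and $\br l$ satisfies $\phi$ in $L$, so the $\cL$-free realization property of $(M,\br m)$ applied to $(L,\br l)$ produces a homomorphism $\rho\colon(M,\br m)\to(L,\br l)$. In particular $\pi\rho(\br m)=\br m$, so $\rho$ is a kind of local section of $\pi$ at $\br m$ (this is the only place where $L\in\cL$ is used, and it is the key leverage the hypothesis provides).

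Next, since $L$ is strict $\cG$-atomic, the tuple $\br l$ freely realizes some pp formula $\psi$ for $\cG$ in $L$. I claim $(M,\br m)$ freely realizes $\psi$ for $\cG$. That $\br m\in\psi(M)$ is immediate because $\br l\in\psi(L)$ and pp formulas are preserved by the homomorphism $\pi$. For the universal property, take any $G\in\cG$ and any $\br c\in\psi(G)$; the $\cG$-free realization of $\psi$ by $(L,\br l)$ delivers a map $f\colon(L,\br l)\to(G,\br c)$, and then the composition $f\rho\colon(M,\br m)\to(G,\br c)$ sends $\br m$ to $f(\br l)=\br c$, as required.

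The consequence then follows tautologically: if every tuple in $M$ freely realizes some pp formula for $\cL$, then by the first part every tuple freely realizes some pp formula for $\cG$, making $M$ strict $\cG$-atomic. The main conceptual obstacle — really the only one — is recognizing that the hypothesis $L\in\cL$ combined with the $\cL$-free realization of $\phi$ by $(M,\br m)$ is exactly what is needed to manufacture the retraction $\rho$; once $\rho$ is in hand, the rest is just composition. No property of $\phi$ beyond its being realized by $\br l$ in $L$ is used in the second half, which is why $\psi$ can be completely unrelated to $\phi$.
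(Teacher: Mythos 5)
Your proof is correct and follows exactly the paper's argument: lift $\br m$ through the pure epimorphism to $\br l\in\phi(L)$, use $L\in\cL$ together with the $\cL$-free realization of $\phi$ by $(M,\br m)$ to obtain the map $\rho\colon(M,\br m)\to(L,\br l)$, then transport the $\cG$-free realization of $(L,\br l)$ back to $M$ by composition. No differences worth noting.
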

\begin{proof}
 Consider a pure epimorphism $h: L \to M$. Let $\br m$ (in $M$) freely realizes a pp formula $\phi$ for $\cL$ and choose a preimage $\br l\in\phi(L)$ of $\br m$ by purity. By hypothesis on $\phi$, there is a map $f: (M,  \br m)\to (L, \br l)$. Being strict $\cG$-atomic, $(L, \br l)$ freely realizes a pp formula $\psi$ for $\cG$. Application of $h$ shows that $(M,  \br m)$ also satisfies $\psi$. To see that it does so freely for $\cG$, let $G\in\cG$ and $\br g$ be a tuple that satisfies $\psi$ in $G$. Then there is a map $(L,  \br l) \to (G, \br g)$. Composed with $f$, this yields the desired map $(M,  \br m) \to (G, \br g)$.
\end{proof}

\begin{cor}
Let $\cL$ be a class of strict $\cG$-atomic modules and $\tC$  a class purely generated by  $\cL$, \cite[\S 2]{HR}. 

Then every strict  $\cL$-atomic member of $\tC$  is   strict $\cG$-atomic.
\end{cor}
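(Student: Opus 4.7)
The strategy is to reduce to the situation covered by the preceding proposition by replacing the (infinite) pure-projective-style source with a finite subsum. Let $M\in\tC$ be strict $\cL$-atomic, and fix a pure epimorphism $h:\bigoplus_{i\in I}L_i\twoheadrightarrow M$ with each $L_i\in\cL$. Pick any tuple $\br m$ in $M$. By strict $\cL$-atomicity, $\br m$ freely realizes some pp formula $\phi$ for $\cL$; by purity of $h$, lift $\br m$ to some $\br l\in\phi(\bigoplus_i L_i)$.

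Since $\br l$ is a \emph{finite} tuple and each element of $\bigoplus_iL_i$ has finite support, $\br l$ lives in a finite sub-direct-sum $L^*=\bigoplus_{i\in F}L_i$. Because pp formulas commute with direct sums of modules, $\br l\in\phi(L^*)$. By our standing hypothesis in Definition \ref{pgen}(4), $\cL$ is closed under finite direct sums, so $L^*\in\cL$; and since strict $\cG$-atomicity is closed under direct sums (noted just before Lemma \ref{Lpureofstrict}), $L^*$ is strict $\cG$-atomic. Thus $L^*$ meets the hypotheses of the preceding proposition in the role of $L$, even though $h$ itself need not restrict to a pure epimorphism on $L^*$.

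From this point the proof simply rehearses the proposition's argument. Free realization of $\phi$ for $\cL$, applied to $\br l\in\phi(L^*)$ with $L^*\in\cL$, yields a map $f:(M,\br m)\to(L^*,\br l)$. Strict $\cG$-atomicity of $L^*$ gives a pp formula $\psi$ that $(L^*,\br l)$ freely realizes for $\cG$. Applying the composition $L^*\hookrightarrow\bigoplus_iL_i\xrightarrow{h}M$, which sends $\br l$ to $\br m$, shows $\br m\in\psi(M)$. Finally, for any $G\in\cG$ and any $\br g\in\psi(G)$, composing the map $(L^*,\br l)\to(G,\br g)$ (given by $\psi$'s free realization) with $f$ produces the required map $(M,\br m)\to(G,\br g)$. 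Hence $(M,\br m)$ freely realizes $\psi$ for $\cG$, and $M$ is strict $\cG$-atomic.

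The only conceptual step beyond the proposition is the finite-support reduction to $L^*\in\cL$; everything else is a direct transcription. I do not expect any real obstacle: the only thing to watch is that one uses the finite direct sum closure of $\cL$ (Definition \ref{pgen}(4)) together with the fact that pp formulas are additive functors, so that the lifted tuple $\br l$ genuinely realizes $\phi$ inside the member $L^*$ of $\cL$, which is precisely where the proposition can be applied.
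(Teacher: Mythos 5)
Your proof is correct and follows the route the paper intends: the corollary is meant to fall out of the preceding proposition, and your finite-support reduction to $L^*=\bigoplus_{i\in F}L_i$ (which lies in $\cL$ by the standing closure under finite direct sums and is strict $\cG$-atomic since that class is closed under direct sums) is exactly the right way to bridge the gap between a pure epimorphism from an arbitrary direct sum of members of $\cL$ and the proposition's hypothesis of a single source $L\in\cL$. The only cosmetic simplification available is that $\br l\in\phi(L^*)$ already follows from $L^*$ being a direct summand (hence pure) in $\bigoplus_iL_i$, so the appeal to pp formulas commuting with direct sums is not strictly needed.
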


Special cases are:

\begin{cor}\label{pureresolvISstrict}
 If $\cL$ is a purely resolving class, 
 then every strict $\cL$-atomic member of $\cL$ is strict atomic, i.e., strict Mittag-Leffler.\qed
\end{cor}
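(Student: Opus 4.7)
The strategy is to specialize the preceding corollary to $\cG = \RMod$. Since $\cL$ is purely resolving, by Definition \ref{pgen}(3) we may write $\cL = \PGen\cB$ for some class $\cB$ of pure-projective modules (which, by the convention in Definition \ref{pgen}(4), we may even take closed under finite direct sum).

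The first task is to verify that every $B\in\cB$ is strict Mittag-Leffler, i.e., strict $\RMod$-atomic. Finitely presented modules are clearly strict atomic, since any tuple in such a module generates a finitely generated pp-type and thereby freely realizes a single pp formula. The class of strict atomic modules is closed under arbitrary direct sums (as noted in the paper just before Lemma \ref{Lpureofstrict}) and, by the unrelativized case of Lemma \ref{Lpureofstrict}, under pure submodules—in particular under direct summands. Since pure-projectivity means being a direct summand of a direct sum of finitely presented modules, $\cB$ indeed consists of strict Mittag-Leffler modules.

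Next I would observe that strict $\cL$-atomic implies strict $\cB$-atomic. This is because $\cB \subseteq \PGen\cB = \cL$ (each $B\in\cB$ is trivially a pure epimorphic image of itself), so a pp formula freely realized by a tuple for the larger class $\cL$ is automatically freely realized for $\cB$.

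Finally, I would apply the preceding corollary taking its ``$\cL$'' to be $\cB$, its ``$\tC$'' to be our $\cL = \PGen\cB$, and its ``$\cG$'' to be $\RMod$. Its hypothesis is met by the first step above, and its conclusion is that every strict $\cB$-atomic member of $\cL$ is strict $\RMod$-atomic. Combined with the previous paragraph, every strict $\cL$-atomic member of $\cL$ is strict Mittag-Leffler, as required.

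The argument is essentially bookkeeping: there is no real obstacle, just the need to unpack the definition of purely resolving and to recall the standard fact that pure-projectives are strict Mittag-Leffler, which follows from the cited closure properties.
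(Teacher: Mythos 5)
Your proof is correct and is precisely the specialization the paper intends by its bare \qed: take $\cB$ to be the generating class of pure-projectives, note these are strict atomic, observe that strict $\cL$-atomic implies strict $\cB$-atomic since $\cB\subseteq\cL$, and apply the preceding corollary with $\cG=\RMod$. The bookkeeping you supply (pure-projectives are strict Mittag-Leffler via closure under direct sums and direct summands) is exactly the background fact the paper relies on implicitly.
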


\begin{cor}\label{flatISstrict}
Every flat strict $_R\flat$-atomic module is strict atomic, hence strict Mittag-Leffler.
\end{cor}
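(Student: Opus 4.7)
The plan is to deduce this as an immediate instance of Corollary \ref{pureresolvISstrict} applied to $\cL = {}_R\flat$. Thus what needs to be checked is that the class of flat (left) modules is purely resolving in the sense of Definition \ref{pgen}(3), i.e., that it is purely generated by pure-projective modules.

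For this I would invoke Lazard's theorem: every flat module is a direct limit of finitely generated free modules. Any directed colimit of modules from a class $\cB$ is a pure epimorphic image of the direct sum of its members, so every flat module $F$ is a pure epimorphic image of a direct sum of finitely generated free modules. Since finitely generated free modules are finitely presented, hence pure-projective, the class ${}_R\flat$ is purely generated by a class of pure-projective modules, and thus qualifies as purely resolving.

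Now the corollary falls into place: Let $M$ be a flat strict ${}_R\flat$-atomic module. Since $M\in{}_R\flat$ and ${}_R\flat$ is purely resolving, Corollary \ref{pureresolvISstrict} applies directly and yields that $M$ is strict atomic, that is, strict Mittag-Leffler.

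I do not anticipate a real obstacle here; the only point to verify carefully is the Lazard-to-pure-epimorphism step, and this is standard and is already implicit in the way purely generated classes are set up in \cite{HR}. One could alternatively cite \cite[Rem.\,2.4(a)]{HR} or any of the standard references noting that ${}_R\flat=\PGen(R\text{-proj})$, bypassing any reproof.
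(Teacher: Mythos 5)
Your proposal is correct and follows the same route as the paper: the paper also deduces the corollary from Corollary \ref{pureresolvISstrict} by noting that $_R\flat$ is purely resolving, being purely generated by the finitely generated projectives (citing \cite{HR}). Your Lazard-theorem justification simply fills in the standard detail behind that citation.
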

\begin{proof}
The class $_R\flat$ of flat modules is purely resolving, in fact, purely generated by the finitely generated projectives, cf.\, \cite [before Thm.2.1]{HR}. Now the previous corollary applies.
 \end{proof}
 
 Compare this to \cite[Thm.\,3.10]{Tf}, which says the same without `strict.'

\subsection{From elements to tuples}

It is always desirable to reduce a condition on tuples  to the same condition on just elements. 

\begin{prop}\label{from1ton} Suppose $M$ is a pure-epimorphic image of a strict $\cL$-atomic module in $\cL$.
 Then the following are equivalent.
 

 \begin{enumerate}[\upshape (i)]
\item $M$ is strict $\cL$-atomic.
    \item Every \emph{element}  in $M$ freely realizes a certain pp formula for $\cL$.
\end{enumerate}
\end{prop}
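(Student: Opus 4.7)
The direction (i)$\Rightarrow$(ii) is immediate, as strict $\cL$-atomicity applies to every tuple, in particular to every 1-tuple.

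For (ii)$\Rightarrow$(i), fix the given pure epimorphism $h\colon L \twoheadrightarrow M$ with $L\in\cL$ strict $\cL$-atomic. Given a tuple $\bar m=(m_1,\ldots,m_n)$ in $M$, the plan is a lifting argument: by (ii) each $m_i$ freely realizes some pp formula $\phi_i$ for $\cL$, so $m_i\in\phi_i(M)$, and purity of $h$ yields a preimage $l_i\in L$ with $h(l_i)=m_i$ and $l_i\in\phi_i(L)$. Setting $\bar l=(l_1,\ldots,l_n)$, strict $\cL$-atomicity of $L$ furnishes a pp formula $\psi(\bar x)$ freely realized by $\bar l$ for $\cL$ in $L$. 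Since $h$ preserves pp formulas, $\bar m=h(\bar l)\in\psi(M)$.

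To establish that $(M,\bar m)$ freely realizes $\psi$ for $\cL$, take $L'\in\cL$ and $\bar c\in\psi(L')$, and seek $f\colon M\to L'$ with $f(\bar m)=\bar c$. Free realization of $\psi$ at $\bar l$ yields $g\colon L\to L'$ with $g(\bar l)=\bar c$, and since each $l_i\in\phi_i(L)$ and $g$ preserves pp, one has $c_i=g(l_i)\in\phi_i(L')=(M,L')m_i$ (by Lemma~\ref{traces}); hence (ii) supplies maps $f_i\colon M\to L'$ with $f_i(m_i)=c_i$. The main obstacle is fusing these single-coordinate realizations into one joint $f$ realizing $\bar m\mapsto\bar c$. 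I expect this to follow either by a direct trace-theoretic argument (via Proposition~\ref{K-sML}) showing that $\psi(L')=(M,L')\bar m$ as a pp-definable subset of $(L')^n$, or by using the companion lifts $\tilde f_i\colon M\to L$ produced by free realization of $\phi_i$ at $m_i$ with target $L\in\cL$ (sending $m_i\mapsto l_i$) and combining them via the strict $\cL$-atomicity of $L$ applied to the image tuple $\tilde f_i(\bar m)\in\psi(L)$ to modify $g$ into a map that factors through $h$.
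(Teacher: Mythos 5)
Your direction (i)$\Rightarrow$(ii) is fine, but (ii)$\Rightarrow$(i) has a genuine gap, and it sits exactly where you locate it: you never produce the joint map $f\colon M\to L'$, and neither of your two suggested remedies works as stated. The first (asserting $\psi(L')=(M,L')\,\bar m$) is precisely the conclusion to be proved. Worse, the formula $\psi$ you choose need not even be a candidate: purity of $h$ only gives a lift $\bar l$ with $\pp_L(\bar l)\subseteq\pp_M(\bar m)$, and this inclusion can be strict, in which case the formula $\psi$ freely realized by $\bar l$ in $L$ fails to $\cL$-generate $\pp_M(\bar m)$; then there are $\chi\in\pp_M(\bar m)$, $L'\in\cL$ and $\bar c\in\psi(L')\setminus\chi(L')$, so no map $(M,\bar m)\to(L',\bar c)$ can exist at all. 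Hence the argument cannot be completed from an arbitrary pure lift of $\bar m$: you must first manufacture a map $M\to L$ carrying $\bar m$ to a tuple of the \emph{same} pp type. Note also that simply summing your maps $f_i$ fails, since $\sum_i f_i$ sends $m_j$ to $c_j$ plus the cross terms $f_i(m_j)$ for $i\neq j$.

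The paper closes this gap by induction on the length of the tuple together with a correction trick. Given $f_0\colon(M,\bar m_0)\to(L,\bar b_0)$ with $hf_0(\bar m_0)=\bar m_0$ (this uses that the free realization is for $\cL$ and that $L\in\cL$), one applies hypothesis (ii) not to the new element $m$ itself but to the corrected element $m':=m-hf_0(m)$, obtaining $f'\colon(M,m')\to(L,b')$ with $hf'(m')=m'$, and then sets $f:=f'+f_0-f'hf_0$. A short computation shows that $hf$ fixes the whole tuple $(\bar m_0,m)$, so $f$ carries it to a tuple in $L$ of identical pp type; strict $\cL$-atomicity of $L$ then supplies a pp formula freely realized by that image tuple for $\cL$, and composing those free maps with $f$ transfers the free realization down to $(M,\bar m_0,m)$. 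Your second remedy gestures in this direction but does not carry it out; to repair the write-up, replace the simultaneous lifting of all coordinates by this one-at-a-time correction.
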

\begin{proof}
 We prove the nontrivial direction by induction on the length of the tuple. Suppose $\br m_0$ is a tuple in $M$ $\cL$-freely realizing  a pp formula $\phi_0$ and $m\in M$. We are going to find a pp formula $\phi$ that the concatenated tuple $(\br m_0, m)$ freely realizes  for $\cL$ (in $M$). 
 
By hypothesis, there is a pure epimorphism $g$ from a strict $\cL$-atomic module $L\in\cL$  onto $M$. Using purity of $g$, choose a preimage $\br b_0$ of $\br m_0$ in $L$ satisfying $\phi_0$. By choice of  $\phi_0$, there's a map $f_0: (M, \br m_0)\to (L, \br b_0)$. Note, $gf_0(\br m_0)=\br m_0$. 

By (ii), there is a pp formula $\phi'$ that the element $m':= m - gf_0(m)$ freely realizes   for $\cL$ in $M$, and, by purity of $g$ again, a preimage $b'$ in $L$ realizing it. Then there is a map $f': (M, m')\to (L, b')$. Again, $gf'(m')=m'$.

Let $f$ be the map $f' + f_0 - f'gf_0$. An easy calculation shows that $f(\br m_0)=\br b_0$ and $f(m) =  f'(m')+ f_0(m)$, and another one that therefore $gf$ fixes both $\br m_0$ and $m$. Thus $(\br m_0, m)$ and $f(\br m_0, m)$ have the same pp type (in $M$ and $L$, resp.).

%
%
%

It remains to note that, by choice of $L$,  the latter type is generated by a single pp formula, say $\phi=\phi(\br x_0, x)$,  that $(L, \br b_0, f(m))$ freely realizes for $\cL$. Composing with the map $f$, we finally conclude that $(M, \br m_0, m)$ freely realizes that same $\phi$  for $\cL$, as desired.
\end{proof}

Warfield showed every module is a pure-epimorphic image of a pure-projective module (one can, in fact, take a direct sum of finitely presented modules), \cite[33.5]{W}.  Hence the lemma holds for $\cL=\RMod$. 

\begin{cor}\label{unary}
 A module is strict Mittag-Leffler if and only if every \emph{element} freely realizes some (unary) pp formula.\qed
\end{cor}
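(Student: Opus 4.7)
The plan is to obtain Corollary \ref{unary} as a direct specialization of Proposition \ref{from1ton} to $\cL=\RMod$, using Warfield's theorem (cited in the paragraph immediately preceding the corollary) to verify the hypothesis of that proposition for every module.

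More concretely, first I would unfold the definition: strict Mittag-Leffler means strict atomic, which, by the convention in Definition \ref{Ffree}(3), is the same as strict $\RMod$-atomic. So the corollary is the instance $\cL=\RMod$ of Proposition \ref{from1ton}, once we know that \emph{every} module meets the standing hypothesis of that proposition (namely, being a pure-epimorphic image of a strict $\cL$-atomic module in $\cL$).

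To discharge that hypothesis, I would invoke Warfield's result \cite[33.5]{W}: every module $M$ is a pure-epimorphic image of a direct sum $F$ of finitely presented modules. Any finitely presented module is trivially strict Mittag-Leffler (its generating tuple freely realizes a single pp formula by the standard free realization in $\Rmod$), and the class of strict $\cL$-atomic modules is closed under arbitrary direct sums (noted just before Lemma \ref{Lpureofstrict}), so $F$ is strict $\RMod$-atomic. Of course $F\in\RMod$. Thus Proposition \ref{from1ton} applies to $M$ with $\cL=\RMod$, and its equivalence (i)$\Leftrightarrow$(ii) is precisely the statement of the corollary.

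There is essentially no obstacle here; the work has already been done in Proposition \ref{from1ton}, and the only thing to check is that Warfield's pure-projective cover is itself strict $\RMod$-atomic, which is immediate from closure of strict atomicity under direct sums of finitely presented modules. The proof can therefore be written in one or two sentences, simply citing \cite[33.5]{W} and Proposition \ref{from1ton}.
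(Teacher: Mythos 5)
Your proposal is correct and is exactly the paper's intended argument: the sentence preceding the corollary invokes Warfield's theorem \cite[33.5]{W} to verify the hypothesis of Proposition \ref{from1ton} for $\cL=\RMod$, and the corollary is then stated with \qed as the immediate specialization. Your additional check that the direct sum of finitely presented modules is strict $\RMod$-atomic (via free realizations in finitely presented modules and closure of strict atomicity under direct sums) is precisely the routine verification the paper leaves implicit.
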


%
%
%

\subsection{Axiomatizability}\label{Ax} Consider  \emph{F-classes} as introduced in \cite{PRZ2}, which are classes of modules that can be axiomatized by \emph{F-sentences}, i.e., implications of the form  $\phi \to \bigvee \phi_i$, where $\phi$ and the $\phi_i$ are pp formulas. Here the conclusio is allowed to be an  infinitary disjunction, but such that the $\phi_i$ are closed under finite sum so that this implication is equivalent to the implication $\phi \to \sum \phi_i$, cf.\,\cite [\S 2]{HR}. The `F' comes from flat, for the class of flat modules is an F-class, see next section.

\cite[Thm.\,2.1]{HR} exhibits an axiomatization of purely resolving classes by F-sentences. To describe the specific axioms, consider such a class, $\tC=\PGen\cB$. Let $\ppf_\cB$ stand for the set of pp formulas that are freely realized (for \emph{all} of $\RMod$) by a tuple in some module from  $\cB$. Given a pp formula $\phi$, let $\ppf_\cB\phi$ be the set of all formulas (of same arity) in $\ppf_\cB$ that are below $\phi$, i.e.,  $\ppf_\cB\phi =\{\theta\in \ppf_\cB\,\, | \,\, \theta \leq\phi\}$.  The axioms for $\tC$ are now all F-sentences of the form $\phi \to \bigvee \ppf_\cB\phi$, where $\phi$ runs over all pp formulas (it suffices to consider $1$-place pp formulas).

The overall assumption that $\cB$ be closed under finite direct sum guarantees that $\ppf_\cB$ (hence also $\ppf_\cB\phi$) is closed under (finite) sums of pp formulas, so that the disjunction in the conclusio can, again, be replaced by the sum. In other words, the above axiom is equivalent to  $\phi \to \sum \ppf_\cB\phi$.

\begin{rem}
 One can generalize this axiomatization result to classes that are purely generated by relativized strict atomic modules, however those will no longer have the special features singled out in the next section.
\end{rem}

\subsection{Special F-classes}\label{specialFdef}
In general, one may take $\phi_i\wedge \phi$ in the conclusio of an F-sentence, to ensure that the implication's converse is automatically true: then the implication $\phi \to \bigvee \phi_i$ entails the equivalence of $\phi$ and $\bigvee \phi_i$. 

(Beware, this may lead outside the realm of a specific shape of formula. For instance, the conjunction of a formula from $\ppf_\cB$ with an arbitrary pp formula $\phi$ may no longer be in $\ppf_\cB$. This is the reason, why,  in \S\ref{Ax}, we worked with $\ppf_\cB\phi$ instead, cf.\ the proof of Lemma \ref{pureresolvF} below.)

We will see shortly that  $_R\flat$ is so axiomatized. But the axioms for the class of  flat modules have yet another special feature. Given another such axiom, $\psi \to \bigvee \psi_i$ with $\psi_i\leq\psi$, if every disjunct $\phi_i$ of $\phi$ is equivalent to one of the $\psi_i$, then $\phi \to \bigvee \phi_i$ implies $\phi\to\psi$. Plainly speaking, one disjunction (trivially) implies another disjunction provided all disjuncts of the former are disjuncts of the latter. The special feature we are after now is when this is the only way that the former disjunction can imply the latter. Here is the precise definition.

\begin{definition}\text{}

\begin{enumerate}
 \item An F-sentence $\phi \to \bigvee \phi_i$ is in \emph{standard form} if $\phi_i\leq \phi$ for every $i$.
 \item Given F-sentences in standard form,  $\phi \to \bigvee \phi_i$ and $\psi \to \bigvee \psi_j$, we say \emph{$\phi$ trivially implies $\psi$ (in $\cF$)} if every disjunct $\phi_i$ (of $\phi$) (logically) implies some disjunct $\psi_j$ (of $\psi$), i.e., for all $i$ there is a $j$ with $\phi_i\leq\psi_j$.
 \item  An F-class $\cF$ is \emph{special} if  it has a set of F-axioms $\phi \to \bigvee \phi_i$ in standard form, one for each pp formula $\phi$ (this is important!), and  such that for every two axioms $\phi \to \bigvee \phi_i$ and $\psi \to \bigvee \psi_j$ from that set, we have $\phi \leq_\cF \psi$ (if and) only if $\phi$ trivially implies $\psi$. 
 \end{enumerate}
 
 The standard form of $\phi \to \bigvee \phi_i$ guarantees that $\phi$ is $\cF$-equivalent to the disjunction of the $\phi_i$, and similarly for $\psi$. 
 Hence, if every disjunct of $\phi$  implies some disjunct $\psi_j$ in the above $\phi$ and $\psi$, then $\phi$ does imply $\psi$ in $\cF$ (trivially), thus justifying  the terminology.

%
%
\end{definition}

\begin{rem}
 $_R\flat$ is a special F-class.  This follows from \cite{Zim},  see \cite[Thm.\,2.3.9]{P2} or the explicit axioms in \cite{PRZ2} or \cite[Fact 1.3]{Tf}, which are in standard form---each disjunct of the conclusio implies the premise.
 
  Is the class of torsion-free modules a special F-class?
 It is an F-class, cf.\ \cite{Tf}. However, one has F-axioms only for annihilation formulas $\phi$, so not \emph{every} pp formula may end up being equivalent to such a disjunction. I have no counterexample at hand, but certainly the proof below breaks down  when not \emph{every} pp formula occurs as a premise.

\end{rem}

\begin{lem}\label{pureresolvF}
 Purely resolving classes are special F-classes.
\end{lem}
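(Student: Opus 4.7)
The plan is to use the explicit axiomatization of $\tC = \PGen\cB$ recalled in Section 3.3 (coming from \cite[Thm.\,2.1]{HR}): for each pp formula $\phi$ we take the axiom $\phi \to \bigvee \ppfLB\phi$. These are automatically in standard form, since by definition every disjunct in $\ppfLB\phi$ is $\leq \phi$, and there is exactly one such axiom per pp formula $\phi$, as required. The ``if'' direction of the special-class condition holds by the general observation made right after Definition 3.4. So the only thing to verify is: whenever $\phi \leq_{\tC} \psi$, the axiom for $\phi$ trivially implies the axiom for $\psi$, i.e., every $\theta \in \ppfLB\phi$ implies some $\eta \in \ppfLB\psi$.

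Fix such a $\theta$ and let $(B, \bar b)$ be a free realization of $\theta$ (for $\RMod$) with $B \in \cB$. First observe that $B$ itself lies in $\tC$, since it is a pure (even trivial) epimorphic image of a direct sum of one module from $\cB$. Because $\theta \leq \phi$ and $\bar b$ satisfies $\theta$ in $B$, the tuple $\bar b$ satisfies $\phi$ in $B$; and since $B \in \tC$ and $\phi \leq_{\tC} \psi$, it follows that $\bar b$ satisfies $\psi$ in $B$ as well.

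The key step is now to upgrade this local information at $B$ to a universal implication between the pp formulas $\theta$ and $\psi$. This is where the choice of $\theta \in \ppfLB$ (free realization \emph{for all} of $\RMod$, not just for $\tC$) pays off: for any module $L$ and any tuple $\bar c$ in $L$ with $\bar c \in \theta(L)$, there is a morphism $(B, \bar b) \to (L, \bar c)$. Since morphisms preserve pp formulas and $\bar b \in \psi(B)$, we obtain $\bar c \in \psi(L)$. Hence $\theta \leq \psi$ universally, and so $\theta$ itself belongs to $\ppfLB\psi$. Taking $\eta := \theta$ provides the required disjunct of the axiom for $\psi$ above $\theta$, proving that the axiom for $\phi$ trivially implies that for $\psi$, as desired.

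The only potential obstacle is the point just used: that free realization over all of $\RMod$ (and not just over $\tC$) transfers a pp-property satisfied at $(B, \bar b)$ to a universal pp-implication. But this is immediate from the definition of $\ppfLB$, and is precisely why in Section 3.3 the set $\ppfLB$ was defined using free realization for $\RMod$ rather than for $\tC$.
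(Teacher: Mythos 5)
Your proof is correct and follows essentially the same route as the paper: both reduce the claim to showing $\ppf_\cB\phi\subseteq\ppf_\cB\psi$ whenever $\phi\leq_{\tC}\psi$, and both establish this by taking a free realization $(B,\bar b)$ of $\theta\in\ppf_\cB\phi$ with $B\in\cB\subseteq\tC$, noting that $\bar b$ already satisfies $\psi$ in $B$ and then pushing this forward along the maps guaranteed by free realization over all of $\RMod$. Your explicit remark that freeness for all of $\RMod$ (not merely for $\tC$) is what makes the last step work is exactly the point the paper flags in \S\ref{specialFdef}.
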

\begin{proof} As all formulas in $\ppf_\cB\phi$ are below $\phi$, the axioms given in the previous sections are in standard form.

To verify that they are special in the sense of the above definition, consider two axioms $\phi \to \bigvee \ppf_\cB\phi$ and $\psi \to \bigvee \ppf_\cB\psi$ with $\phi\leq_{\tC}\psi$. All we need is $\ppf_\cB\phi\subseteq\ppf_\cB\psi$. 

So let  
$\theta \in \ppf_\cB$ be below $\phi$. To show that it is also below $\psi$, let 
 $(M, \br m)$ realize $\theta$. Pick a free realization  $(B, \br b)$ of $\theta$ with $B\in \cB$, send it to $(M, \br m)$ and observe that it remains to see that $\psi$ is already satisfied in $(B, \br b)$. But this follows from   $B\in\cB\subseteq\tC$ and  the assumption $\phi\leq_{\tC}\psi$.
 \end{proof}

This will be used in \S \ref{basicS}.

\section{Separation}

Reinhold Baer introduced separability in abelian groups---of finite subsets by direct summands that are completely decomposable, see \cite{F}.


\subsection{Separabilities}

We distinguish several types of separation---according to three different coordinates,  cardinality of sets to be separated, the form of separation, and the kind of submodules to perform the separation. In Baer's original separation, the second coordinate, the form of separation, was by direct summand, which is what one usually sees in the literature in the various types, see \cite{EM}. This can be relaxed to Azumaya's locally split,\footnote{also to his finitely split submodules,} or to pure  submodules, called \emph{local separation} and \emph{pure separation}, respectively. On top of that, we will have an entire family of relativized pure separabilities.

Generalizing e.g.\ \cite[\S 4]{GIRT}, we make the following, essentially three, definitions.

\begin{definition}\label{sepdef}
 Let $\kappa$ be an infinite regular cardinal and $\cS$ a class of modules. 

\begin{enumerate}
 \item  A module $M$ is $\kappa$-\emph{separable by (modules from) $\cS$}, or \emph{$(\kappa, \cS)$-separable} for short, if every subset of $M$ of cardinality $<\kappa$ is contained in a submodule from  $\cS$ that is a direct summand of $M$. 
 \item We say \emph{locally} (resp., \emph{$\cL$-pure}) $\kappa$-\emph{separable by (modules from) $\cS$}, or \emph{locally} (resp., \emph{$\cL$-pure}) \emph{$(\kappa, \cS)$-separable}, if `direct summand' is replaced by `locally split submodule' (resp., by  `$\cL$-pure submodule').
  \item If  $\kappa=\aleph_0$, we simply omit it or say \emph{finite separability} (for whichever version of separability at hand).
 \item  If $\kappa=\aleph_1$, we  speak of  \emph{countable separability}. 
 \item  We omit the reference  to the class $\cS$ when it is $\Rmod$. 
\end{enumerate}
   \end{definition}

By these conventions,  \emph{local separability by $\cS$}, or  \emph{local $\cS$-separability}, is local $(\aleph_0, \cS)$-separability, while  \emph{$\cL$-pure  separability by $\cS$}, or \emph{$\cL$-pure $\cS$-separability}, is $\cL$-pure $(\aleph_0, \cS)$-separability. And, for instance,   a \emph{separable module} is one all of whose finite subsets  are contained in a finitely presented direct summand---beware, this is often handled differently in the literature, cf.\  \cite {EM} and \cite{F}.

\begin{rem}\text{}
\begin{enumerate} [\upshape (a)]
 \item Bear's original separability is (finite) separability---by completely decomposable submodules, cf.\,\cite{F}.  
 \item Clearly, any of the $\kappa$-separabilities entail the corresponding $\lambda$-separability for every $\lambda\leq\kappa$.
 \item Clearly, direct summand $\Longrightarrow$ locally split submodule $\Longrightarrow$ pure submodule $\Longrightarrow$ $\cL$-pure submodule, hence
  
 separable $\Longrightarrow$ locally  separable  $\Longrightarrow$ pure  separable $\Longrightarrow$ $\cL$-pure  separable.
\end{enumerate}

\end{rem}


 Our interest in separation  lies in the following facts, which we single out for reference.

\begin{lem}\label{basicfact}\text{}
\begin{enumerate}  
 \item If $M$ is  $\cL$-pure separable by $\cL$-atomic modules, $M$ is $\cL$-atomic (hence $\langle\cL\rangle$-atomic).
 
 \item If $M$ is locally  separable by strict $\cL$-atomic modules, $M$ is strict $\cL$-atomic.
 \item  If $M$ is locally  separable by countably generated strict $\cL$-atomic modules, $M$ is strict $\langle\cL\rangle$-atomic.
\end{enumerate}
\end{lem}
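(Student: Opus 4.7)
The plan is to treat all three parts uniformly by picking an arbitrary tuple $\br m$ in $M$, using the separation hypothesis to find a suitable submodule $N\subseteq M$ containing $\br m$, and then transporting the relevant atomicity property from $N$ up to $M$ using whatever purity the chosen form of separation delivers.

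For (1), I take $N$ to be an $\cL$-pure $\cL$-atomic submodule of $M$ containing $\br m$. Since $N$ is $\cL$-atomic there is a pp formula $\phi\in\pp_N(\br m)$ which $\cL$-generates $\pp_N(\br m)$; being pp, $\phi$ automatically belongs to $\pp_M(\br m)$ as well. To see that $\phi$ $\cL$-generates $\pp_M(\br m)$, I pick any $\psi\in\pp_M(\br m)$ and apply the definition of $\cL$-purity of $N\subseteq M$ to obtain some $\psi'\leq_\cL\psi$ with $\psi'\in\pp_N(\br m)$; by choice of $\phi$, $\phi\leq_\cL\psi'$, and composing gives $\phi\leq_\cL\psi$. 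The parenthetical `hence $\langle\cL\rangle$-atomic' is then simply the fact, recalled in the excerpt, that $\cL$-atomicity and $\langle\cL\rangle$-atomicity coincide.

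For (2), the separating $N$ is a locally split submodule that is strict $\cL$-atomic, so there is a local retract $g_{\br m}:M\to N$ fixing $\br m$, and $(N,\br m)$ freely realizes some pp formula $\phi$ for $\cL$. Since locally split monomorphisms are pure, $\br m\in\phi(M)$ as well. Given any $L\in\cL$ and $\br c\in\phi(L)$, the free realization in $N$ provides $f:(N,\br m)\to(L,\br c)$, and then $f\circ g_{\br m}:(M,\br m)\to(L,\br c)$ is the map demanded by the definition of free realization for $\cL$; thus $M$ is strict $\cL$-atomic.

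Part (3) will follow by combining (2) with the `Strict' Lemma \ref{StrictLemma}: any countably generated strict $\cL$-atomic module is already strict $\langle\cL\rangle$-atomic, so the hypothesis of (3) is really local separability by strict $\langle\cL\rangle$-atomic modules, and applying (2) with $\langle\cL\rangle$ in place of $\cL$ yields that $M$ is strict $\langle\cL\rangle$-atomic. The only real subtlety across the three arguments lies in (1), where one must use $\cL$-purity in the right direction---namely, to pull a formula satisfied by $\br m$ in $M$ down to an $\cL$-equivalent formula satisfied by $\br m$ in $N$---in order to propagate $\cL$-generation from $N$ up to $M$; the (2)/(3) arguments are more straightforward because the local retract pushes information in the same direction as the maps whose existence we wish to produce.
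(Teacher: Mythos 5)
Your proposal is correct and follows essentially the same route as the paper: in (1) you unpack the paper's observation that $\cL$-purity makes the pp type of $\br m$ $\cL$-equivalent whether computed in $N$ or in $M$, so an $\cL$-generator transfers; in (2) you compose the local retract with the free-realization maps exactly as the paper does; and (3) is the same combination of (2) with the `Strict' Lemma. No gaps.
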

\begin{proof}
 (1) By definition $N\subseteq M$ is $\cL$-pure iff the pp types of all tuples in $N$ are $\cL$-equivalent, whether taken in $N$ or in $M$, which is, by the Main Theorem of \cite{habil} or \cite{MLII} all we need.
 
 (2) First separate a  given tuple in $M$ by a locally split submodule $N$ that is strict $\cL$-atomic. Inside $N$, that tuple $\cL$-freely realizes a certain pp formula $\phi$. The same formula generates the tuple's pp type in $M$, for locally split submodules are pure. To see that this tuple freely realizes $\phi$ also in $M$,  simply combine the tuple's local retract from $M$ to $N$ by any of the `free' maps from $N$ to a realization of $\phi$ in any other module from $\cL$.
 
%
%
 (3) Apply the (stronger half of the) `Strict' Lemma and (2) above.
\end{proof}

The question of a converse arises at once. This will be addressed in \S \ref{SepPGen}, especially in Theorem \ref{Thm.1}.    
We conclude this section with some general observations.

\begin{lem}\label{locsep}
Suppose $\cB\subseteq \cL$ and $M$ a strict $\cL$-atomic module. 

\begin{enumerate}
\item 
The following are equivalent for any infinite cardinal 
$\kappa$.

\begin{enumerate}[ (i)]

 \item $M$ is $\cL$-pure $\kappa$-separable by $\cB$.
 \item $M$ is pure $\kappa$-separable by $\cB$.
 \item$M$ is locally $\kappa$-separable by $\cB$.
\end{enumerate}
\item If all members of $\cB$ are finitely generated, $M$ is $\kappa$-separable by $\cB$ in that case.
\end{enumerate}
\end{lem}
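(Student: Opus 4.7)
The plan is to observe that the hierarchy (iii) $\Rightarrow$ (ii) $\Rightarrow$ (i) is immediate from the trivial chain of implications "direct summand $\Rightarrow$ locally split $\Rightarrow$ pure $\Rightarrow$ $\cL$-pure" recorded in the remark right after Definition \ref{sepdef}. So the content of part (1) is the implication (i) $\Rightarrow$ (iii), and part (2) is essentially a bonus after that is done.

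For (i) $\Rightarrow$ (iii), I would take any subset $X \subseteq M$ of cardinality $<\kappa$ and, by hypothesis, place it inside an $\cL$-pure submodule $N \subseteq M$ with $N \in \cB$. The crucial input is that $\cB \subseteq \cL$, which puts $N$ itself in $\cL$. Then Lemma \ref{locsplitsub} directly applies: an $\cL$-pure monomorphism into a strict $\cL$-atomic module whose source lies in $\cL$ is automatically locally split. Hence $N$ is a locally split submodule of $M$ containing $X$, witnessing local $\kappa$-separability of $M$ by $\cB$.

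For (2), the same invocation of Lemma \ref{locsplitsub} does more: when $N$ is in addition finitely generated (which is guaranteed if every member of $\cB$ is finitely generated, since $N$ was chosen from $\cB$), the $\cL$-pure inclusion $N \hookrightarrow M$ actually splits, making $N$ a direct summand. Combining this with part (1), whose hypothesis (i) is by assumption in force, upgrades local $\kappa$-separability by $\cB$ to plain $\kappa$-separability by $\cB$.

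I do not anticipate any real obstacle: the statement is essentially a bookkeeping corollary of Lemma \ref{locsplitsub}, whose hypotheses are met exactly because the separating modules come from $\cB \subseteq \cL$ and the ambient module is assumed strict $\cL$-atomic. The only point worth stating carefully is the verification that the chosen $N$ belongs to $\cL$ (so that Lemma \ref{locsplitsub} is applicable), and that the conclusion of that lemma can be applied uniformly across all subsets of $M$ of size $<\kappa$ so as to yield a $\kappa$-separation in the stronger sense.
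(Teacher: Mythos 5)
Your proposal is correct and follows essentially the same route as the paper: both hinge on Lemma \ref{locsplitsub}, applied to the separating submodule $N\in\cB\subseteq\cL$ inside the strict $\cL$-atomic module $M$, to upgrade $\cL$-purity to local splitting (and to splitting when $N$ is finitely generated). The only cosmetic difference is that the paper notes (i)$\Leftrightarrow$(ii) directly via Remark \ref{LpureInL} (for submodules in $\cL$, $\cL$-purity coincides with purity), whereas you close the same cycle through (iii); either way the content is identical.
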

\begin{proof}
 As $\cB\subseteq\cL$,  for submodules that are in $\cB$, $\cL$-purity is the same as purity, Rem.\ref{LpureInL}. So the first two conditions are equivalent (and this does not require atomicity of $M$). To see they imply the third, and for (2), refer to Lemma \ref{locsplitsub}. 
\end{proof}

\subsection{Separation and definable subcategories}
Next we show that in the situation of the previous lemma, $M$ belongs to the definable subcategory generated by $\cL$.

\begin{lem}\label{sepInL} \text{}
 \begin{enumerate}
  \item If $M$ is $\cL$-pure separable by $\cL$ (hence pure separable by $\cL$), then $M\in\langle\cL\rangle$.
 \item  Suppose $\cL$ is axiomatizable by implications of the form $\Phi\to\Psi$, where $\Phi$ is a possibly infinite conjunction of possibly infinite disjunctions of pp formulas and $\Psi$ a possibly infinite disjunction of possibly infinite conjunctions of existential formulas,
 all in the same finitely many variables. 
 
  If $M$ is $\cL$-pure separable by $\cL$ (hence pure separable by $\cL$), then $M$ satisfies all those implications  that are true in $\cL$.
 \end{enumerate}
\end{lem}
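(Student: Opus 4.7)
\textbf{Proof plan for Lemma \ref{sepInL}.}

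The plan is uniform for both parts: given a finite tuple $\br m$ in $M$, I would separate it by a submodule $N\in\cL$ and then shuttle formulas between $M$ and $N$ using appropriate preservation properties of the inclusion. First note that the hypothesis ``$\cL$-pure separable by $\cL$'' is automatically the same as ``pure separable by $\cL$,'' by Remark \ref{LpureInL}(a): any $\cL$-pure submodule of $M$ that lies in $\langle\cL\rangle$ (in particular, in $\cL$) is already pure in $M$. So throughout one may assume the separating $N\in\cL$ is a pure submodule of $M$, and this is what the parenthetical ``hence pure separable by $\cL$'' is alluding to.

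For part (1) I would use the standard characterization that $\langle\cL\rangle$ is cut out by pp-pairs: $M\in\langle\cL\rangle$ iff for every pair $\phi/\psi$ of matching pp formulas with $\phi(L)\subseteq\psi(L)$ for all $L\in\cL$, one has $\phi(M)\subseteq\psi(M)$. Fix such a pair and $\br m\in\phi(M)$, and separate $\br m$ by a pure $N\in\cL$. Purity of $N\subseteq M$ gives $\br m\in\phi(N)$; the assumption on the pair combined with $N\in\cL$ yields $\br m\in\psi(N)$; and since $\psi$ is a pp formula, hence preserved upward by the inclusion $N\hookrightarrow M$, one obtains $\br m\in\psi(M)$, as required.

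Part (2) is essentially the same argument applied to an axiom $\Phi\to\Psi$ of the prescribed shape, relying on two preservation properties of a pure inclusion $N\subseteq M$. Pp formulas are \emph{absolute} (the set of tuples from $N$ satisfying a given pp formula is the same whether computed in $N$ or in $M$), and absoluteness propagates through arbitrary conjunctions and disjunctions; so $\Phi$ is absolute between $N$ and $M$. Existential formulas are preserved \emph{upward} from $N$ to $M$, and upward preservation likewise propagates through arbitrary conjunctions and disjunctions; so $\Psi$ is preserved upward. Now given $\br m$ in $M$ satisfying $\Phi$, separation by a pure $N\in\cL$ yields $\br m$ satisfies $\Phi$ in $N$ (absoluteness), hence $\br m$ satisfies $\Psi$ in $N$ (since the axiom holds throughout $\cL$ and $N\in\cL$), hence $\br m$ satisfies $\Psi$ in $M$ (upward preservation). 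The only conceptual content is the asymmetry between premise and conclusion: because pure submodules only carry existential formulas upward (not downward) in general, the conclusion is constrained to be existential while the premise may use pp formulas in an arbitrary Boolean arrangement---precisely what the hypothesis on the shape of $\Phi\to\Psi$ encodes. There is no substantive technical obstacle beyond keeping track of these preservation directions.
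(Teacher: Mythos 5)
Your proposal is correct and follows essentially the same route as the paper: separate the tuple by a submodule from $\cL$ (which is genuinely pure by Remark \ref{LpureInL}), push the pp/disjunctive antecedent down by purity, apply the axiom inside the separating module, and lift the existential conclusion back up. The paper simply folds part (1) into part (2) via the standard axiomatization of $\langle\cL\rangle$ by pp-implications, which is the same pp-pair characterization you invoke directly.
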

\begin{proof} (1) is the special case of (2) where both $\Phi$ and $\Psi$ are single pp formulas. To prove (2) consider one such implication $\Phi\to\Psi$ and assume the tuple $\br m$ satisfies the antecedent  $\Phi$. We may $\cL$-pure separate it by some $L\in\cL$, which is actually pure in $M$, Remark \ref{LpureInL}.

Now $\br m$ satisfies all conjuncts of $\Phi$ and hence one disjunct for each of these in $M$. Being pp, these latter are, by purity, satisfied by $\br m$ in $L$ as well. It is easy to see that $\br m$ satisfies all of $\Phi$ in $L$. But being in $\cL$, $L$ satisfies the implication, hence  $\br m$ satisfies $\Psi$ in $L$.  Being existential, $\Psi$ it is also true of that very tuple in the original $M$, as desired.
\end{proof}



\section{Countable self-separation}\label{selfsepsect} 

\subsection{A general fact} 

 As shown in \cite[Prop.\,3.5]{Tf} or \cite[Cor.\,6.5]{MLII}, a module is $\cK$-Mittag-Leffler if and only if each of its countable subsets is contained in a countably generated $\cL$-pure submodule that is $\cK$-Mittag-Leffler (where $\cK$ and $\cL$ are assumed to be mutually dual). Because of the stronger half of the `Strict' Lemma \ref{StrictLemma}, we can say that $\cK$-Mittag-Leffler modules are countably $\cL$-pure separable by strict $\langle\cK\rangle$-Mittag-Leffler modules. (The easy direction also follows from Lemma \ref{basicfact}.)
 This fundamental separation result  characterizing  $\cK$-Mittag-Leffler modules we now formulate as follows. 
 
\begin{fact}\label{ctblesep} 
%
A module is $\cL$-atomic if and only if it is countably $\cL$-pure separable by countably generated strict $\cL$-atomic modules (even strict $\langle\cL\rangle$-atomic modules).

\end{fact}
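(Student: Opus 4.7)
The plan is to deduce this as an immediate combination of three earlier ingredients: the easy implication of Lemma \ref{basicfact}(1), the preceding fundamental separation theorem for $\cL$-atomic modules (\cite[Prop.\,3.5]{Tf} or \cite[Cor.\,6.5]{MLII}), and the `Strict' Lemma \ref{StrictLemma}. There is essentially no new content to prove; the task is to repackage the cited $\cK$-Mittag-Leffler statement on the $\cL$-side and upgrade the separating modules from $\cL$-atomic to strict $\langle\cL\rangle$-atomic via the Strict Lemma.

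For the ``if'' direction, suppose $M$ is countably $\cL$-pure separable by strict $\cL$-atomic modules. Any finite tuple $\bar m$ in $M$ certainly sits in some countable subset, so $\bar m$ is contained in an $\cL$-pure submodule $N\subseteq M$ which is strict $\cL$-atomic. Being strict $\cL$-atomic entails $\cL$-atomic (as observed right after Definition~\ref{Ffree}), so $M$ is $\cL$-pure separable by $\cL$-atomic modules in the finite sense as well, and Lemma~\ref{basicfact}(1) delivers $\cL$-atomicity of $M$.

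For the ``only if'' direction, assume $M$ is $\cL$-atomic. Choosing a class $\cK$ of right modules definably dual to $\cL$, $M$ becomes $\cK$-Mittag-Leffler in the sense of \cite{MLII}, and \cite[Prop.\,3.5]{Tf} (equivalently \cite[Cor.\,6.5]{MLII}) produces, for every countable subset $X\subseteq M$, a countably generated $\cL$-pure submodule $N$ of $M$ containing $X$ which is itself $\cK$-Mittag-Leffler, i.e., $\cL$-atomic. Now the first half of the `Strict' Lemma \ref{StrictLemma} promotes this countably generated $\cL$-atomic $N$ to a strict $\cL$-atomic module, and its second half even to a strict $\langle\cL\rangle$-atomic one. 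Hence $M$ is countably $\cL$-pure separable by countably generated strict $\langle\cL\rangle$-atomic modules, which is the strongest form of the conclusion.

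The only mildly subtle point—and arguably the only obstacle worth naming—is the matching of sides: the cited separation theorem is phrased in $\cK$-Mittag-Leffler language, whereas here we stay on the $\cL$-atomic side. This is resolved by passing to a definably dual class via the main theorem of \cite{MLII}, after which the Strict Lemma applies verbatim and both halves of the equivalence drop out.
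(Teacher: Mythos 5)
Your proposal is correct and follows essentially the same route as the paper: the forward direction is the cited separation result of \cite[Prop.\,3.5]{Tf} (= \cite[Cor.\,6.5]{MLII}) transported to the $\cL$-atomic side via definable duality and then upgraded by the `Strict' Lemma \ref{StrictLemma}, while the converse is Lemma \ref{basicfact}(1). The paper records exactly this combination (including the remark that the easy direction follows from Lemma \ref{basicfact}), so there is nothing to add.
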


\subsection{Countable self-separation}\label{ctbleLsep} 
A principal deficiency in the above general fact is that one has no reason to expect the strict $\cL$-atomic separating submodules to be themselves in $\cL$, even if the original module was. The L\"owenheim-Skolem Theorem of first-order model theory springs to mind as a remedy, and we are going to pursue this in \S\ref{ctblerings}. In order to avoid repeating a mouthful let us make a definition.

\begin{definition}
 We call a class $\cL$ \emph{(countably) self-separating} if
 every $\cL$-atomic module in $\cL$ is countably $\cL$-pure separable by countably generated (strict) $\cL$-atomic modules \emph{in} $\cL$.
\end{definition}


\begin{lem} Suppose $\cL$ is self-separating.
\begin{enumerate}
 \item Every strict $\cL$-atomic module in $\cL$ is countably locally separable by countably generated (strict) $\cL$-atomic modules in $\cL$, hence by strict $\langle\cL\rangle$-atomic submodules in $\cL$.
\item Every strict $\cL$-atomic  module in $\cL$ is  strict $\langle\cL\rangle$-atomic.
\end{enumerate}
\end{lem}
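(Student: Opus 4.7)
For part (1), the plan is to apply the self-separation hypothesis and then upgrade the resulting $\cL$-pure submodules to locally split ones via Lemma~\ref{locsplitsub}. Let $M$ be strict $\cL$-atomic with $M\in\cL$, and let $X\subseteq M$ be a countable subset. Since strict $\cL$-atomic entails $\cL$-atomic, self-separation of $\cL$ yields a countably generated $\cL$-atomic submodule $N\in\cL$ that contains $X$ and is $\cL$-pure in $M$. The trio ``$M$ strict $\cL$-atomic, $N\hookrightarrow M$ an $\cL$-pure monomorphism, $N\in\cL$'' is exactly the hypothesis of Lemma~\ref{locsplitsub}, which upgrades the inclusion $N\subseteq M$ to a locally split one. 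This establishes countable local separation of $M$ by countably generated $\cL$-atomic submodules in $\cL$.

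The parenthetical `strict' and the `hence' clause of (1) now follow from the Strict Lemma~\ref{StrictLemma}: a countably generated $\cL$-atomic module is automatically strict $\langle\cL\rangle$-atomic (and \emph{a fortiori} strict $\cL$-atomic), so each separating submodule $N$ above is of the required form. For part (2), the plan is to feed part (1) into Lemma~\ref{basicfact}(3). Countable local separability implies finite local separability by the monotonicity of $\kappa$-separability noted in the remark after Definition~\ref{sepdef}; thus $M$ is locally separable by countably generated strict $\cL$-atomic modules, and Lemma~\ref{basicfact}(3) then concludes $M$ is strict $\langle\cL\rangle$-atomic.

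No genuine obstacle is expected: the lemma is largely a repackaging of Lemma~\ref{locsplitsub}, the Strict Lemma~\ref{StrictLemma}, and Lemma~\ref{basicfact}(3) over the definition of self-separation. The only point worth checking is that self-separation, as formulated, delivers submodules in $\cL$ and $\cL$-pure inclusions, so that the hypotheses of Lemma~\ref{locsplitsub} are indeed available; this is immediate from the definition.
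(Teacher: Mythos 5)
Your proof is correct and follows essentially the same route as the paper: the paper cites Lemma~\ref{locsep}(1) where you invoke Lemma~\ref{locsplitsub} directly, but since Lemma~\ref{locsep}(1) is itself proved by appeal to Lemma~\ref{locsplitsub}, this is the same argument, and your treatment of the ``strict''/``hence'' clauses via the Strict Lemma and of part (2) via Lemma~\ref{basicfact}(3) matches the paper exactly.
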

\begin{proof}
For (1) use the `Strict' Lemma \ref{StrictLemma} and Lemma \ref{locsep}(1). For (2) use (1) and Lemma \ref{basicfact}(3).
\end{proof}

\subsection{Elementary classes}\label{ctblerings} Recall that a class axiomatized by finitary first-order axioms is called \emph{elementary}.

\begin{lem}
 Suppose $\cL$ is an elementary class and $\kappa=max(|R|, \aleph_0)$. Let $\cB$ be the class of strict $\cL$-atomic modules in $\cL$ of power $\leq\kappa$.

\begin{enumerate}
 \item Every $\cL$-atomic module in $\cL$ is pure $\kappa^+$-separable by $\cL$-atomic modules in $\cL$ of power $\kappa$.
 \item  Every strict $\cL$-atomic module in $\cL$ is locally $\kappa^+$-separable by $\cB$.
\end{enumerate}
\end{lem}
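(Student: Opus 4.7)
The plan is to apply the downward L\"owenheim-Skolem theorem. Since $\cL$ is elementary, it is axiomatizable by finitary first-order sentences in the first-order language of (left) $R$-modules, whose cardinality is $|R|+\aleph_0 = \kappa$. Fix an $\cL$-atomic module $M \in \cL$ and an arbitrary subset $X \subseteq M$ with $|X| \leq \kappa$. Downward L\"owenheim-Skolem then yields an elementary submodel $N \preceq M$ with $X \subseteq N$ and $|N| \leq \kappa$.

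I would then verify three properties of $N$. First, $N \in \cL$, because $\cL$ is elementary and $N \equiv M$. Second, $N$ is pure in $M$: pp formulas are positive existential, so elementary substructure forces $\pp_N(\bar n) = \pp_M(\bar n)$ for every tuple $\bar n$ from $N$, which is precisely purity. Third, $N$ is $\cL$-atomic: for any tuple $\bar n$ in $N$, a pp formula that $\cL$-generates $\pp_M(\bar n)$ also $\cL$-generates $\pp_N(\bar n)$, by the equality just noted. This establishes (1).

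For (2), the same construction applies, now with the assumption that $M$ is strict $\cL$-atomic. Since $N$ is pure (hence $\cL$-pure) in the strict $\cL$-atomic $M$, Lemma \ref{Lpureofstrict} shows that $N$ is itself strict $\cL$-atomic; combined with $N \in \cL$ and $|N| \leq \kappa$ this gives $N \in \cB$. Hence $M$ is pure $\kappa^+$-separable by $\cB$. Finally, because $M$ is strict $\cL$-atomic and $\cB \subseteq \cL$, Lemma \ref{locsep}(1), applied with $\kappa^+$ in place of $\kappa$, upgrades pure $\kappa^+$-separability by $\cB$ to local $\kappa^+$-separability by $\cB$.

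The only point requiring any care is the cardinality bookkeeping for LST: one notes that the module signature has size $|R|+\aleph_0 = \kappa$, so LST delivers an elementary submodel of size $\leq \max(\kappa, |X|) \leq \kappa$ around any given $\kappa$-sized subset. All the substantive content---preservation of purity, transfer of (strict) $\cL$-atomicity under pure inclusion, and the pure/locally split equivalence over strict $\cL$-atomic targets---has already been put in place by Lemmas \ref{Lpureofstrict} and \ref{locsep}, so the argument amounts to packaging LST with these earlier results.
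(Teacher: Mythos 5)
Your proof is correct and follows essentially the same route as the paper: downward L\"owenheim--Skolem to get an elementary (hence pure) submodel $N\in\cL$ of power $\leq\kappa$ around the given set, then Lemma \ref{Lpureofstrict} to transfer strict $\cL$-atomicity to $N$. The only cosmetic difference is that you invoke Lemma \ref{locsep}(1) to pass from pure to local separability, whereas the paper applies Lemma \ref{locsplitsub} (on which \ref{locsep} rests) directly to $N$.
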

\begin{proof}
 By the L\"owenheim-Skolem Theorem, every subset of a module $M$ of power at most $\kappa$ is contained in an elementary substructure $N$ of $M$ of power   $\kappa$. Then $N$ is in $\cL$ and a pure submodule of $M$, hence  $\cL$-atomic. This proves (1). For (2) apply Lemma \ref{Lpureofstrict} to see that $N$ is strict $\cL$-atomic and thus in $\cB$.
 In that case, Lemma \ref{locsplitsub} yields that $N$  is locally split in $M$.
\end{proof}

\begin{prop}  Suppose $\cL$ is an elementary class and $R$ is countable.

 A module  in $\cL$ is strict $\cL$-atomic  if and only if it is locally countably separable by countable strict $\cL$-atomic modules in $\cL$.
 
 In particular, elementary classes over countable rings are self-separating.
 \end{prop}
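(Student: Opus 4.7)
The plan is to obtain this proposition as a direct specialization of the preceding lemma to the case $\kappa := \max(|R|, \aleph_0) = \aleph_0$, so that $\kappa^+ = \aleph_1$ (the ``countable'' cardinal in our separability terminology) and the class $\cB$ from that lemma becomes the class of countable strict $\cL$-atomic modules in $\cL$. Once this dictionary is in place, both directions of the biconditional and the ``in particular'' clause follow with no new L\"owenheim--Skolem input.

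The forward direction of the biconditional is then literally part (2) of the preceding lemma: a strict $\cL$-atomic module in $\cL$ is locally $\aleph_1$-separable by $\cB$, which by Definition \ref{sepdef}(4) is the same as being locally countably separable by countable strict $\cL$-atomic modules in $\cL$.

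For the converse, I would invoke Lemma \ref{basicfact}(2). Let $\br m$ be a finite tuple in $M \in \cL$. The set of entries of $\br m$ is countable, so by hypothesis it lies inside a locally split submodule $N \subseteq M$ with $N$ countable, in $\cL$, and strict $\cL$-atomic. Inside $N$ the tuple $\br m$ freely realizes some pp formula $\phi$ for $\cL$, and combining the local retract $M \to N$ at $\br m$ with the ``free'' maps out of $N$ into realizations of $\phi$ in arbitrary members of $\cL$ exhibits $(M, \br m)$ itself as a free realization of $\phi$ for $\cL$. Hence $M$ is strict $\cL$-atomic.

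For the ``in particular'' clause I would use part (1) of the preceding lemma, which specializes to: every $\cL$-atomic module in $\cL$ is pure $\aleph_1$-separable by countable $\cL$-atomic modules in $\cL$. Pure submodules are $\cL$-pure, and by the `Strict' Lemma \ref{StrictLemma} a countably generated $\cL$-atomic module is automatically strict $\cL$-atomic, so the separating submodules satisfy precisely the requirements in the definition of countably self-separating. I do not anticipate a genuine obstacle; the only care required is bookkeeping to match ``locally $\aleph_1$-separable'' and ``pure $\aleph_1$-separable'' with the defined terminology of ``locally countably separable'' and ``countably $\cL$-pure separable'' and to upgrade atomic separating submodules to strict ones via the `Strict' Lemma.
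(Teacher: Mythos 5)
Your proposal is correct and follows essentially the same route as the paper: the forward direction is the preceding lemma specialized to $\kappa=\aleph_0$ (which is where countability of $R$ enters), the converse is Lemma \ref{basicfact}(2) (whose proof you re-derive rather than just cite), and the self-separation clause comes from part (1) of the lemma together with the `Strict' Lemma \ref{StrictLemma}. No gaps.
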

\begin{proof}
 One direction follows from the lemma (for $\kappa=\aleph_0$) and the `Strict'Lemma \ref{StrictLemma}.  The other is immediate from (2) of Lemma \ref{basicfact}.
\end{proof}

This can be slightly strengthened  if the axiomatization of $\cL$ is of a specific kind.

\begin{cor} Suppose $R$ is countable and $\cL$ is axiomatizable by implications as in Lemma \ref{sepInL} (2), but finitary (so that L\"owenheim-Skolem applies).

 A module is strict $\cL$-atomic and  in $\cL$ if and only if it is locally countably separable by countable strict $\cL$-atomic modules in $\cL$.
\end{cor}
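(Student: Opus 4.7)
The plan is to reduce the corollary to the preceding proposition (for the forward direction) and to Lemma \ref{sepInL}(2) combined with Lemma \ref{basicfact}(2) (for the backward direction). The only new content beyond the preceding proposition is that the membership $M\in\cL$ can now be moved from a hypothesis into the equivalence, because the stronger axiomatization hypothesis lets us detect membership in $\cL$ from separation data.

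For the forward direction, assume $M$ is strict $\cL$-atomic and in $\cL$. Since $\cL$ is (a fortiori) an elementary class, the preceding proposition applies verbatim and produces the desired local countable separation by countable strict $\cL$-atomic modules in $\cL$.

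For the backward direction, suppose $M$ is locally countably separable by countable strict $\cL$-atomic modules in $\cL$. I would extract two conclusions.  First, countable local separation entails finite local separation, so every finite tuple of $M$ sits inside a locally split strict $\cL$-atomic submodule; Lemma \ref{basicfact}(2) then yields that $M$ is strict $\cL$-atomic.  Second, because locally split submodules are pure, the same data gives $M$ as (finite) pure separable by modules from $\cL$; Lemma \ref{sepInL}(2) then shows that $M$ satisfies every implication of the shape specified there which is true in $\cL$.  Since by hypothesis the whole axiomatization of $\cL$ consists of such implications, we conclude $M\in\cL$.

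There is no real obstacle here---the corollary is essentially a repackaging of the previous proposition once Lemma \ref{sepInL}(2) is invoked.  The one point worth flagging is that the extra ``finitary'' hypothesis on the axiomatization of $\cL$ is actually only needed for the forward direction, via L\"owenheim--Skolem inside the preceding proposition; the backward direction goes through under the possibly infinitary axiomatization already allowed by Lemma \ref{sepInL}(2), because the axioms live in finitely many variables and thus each can be verified on one finite tuple at a time, via a single finite pure separation.
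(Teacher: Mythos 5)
Your proof is correct and follows the same route as the paper: the forward direction is the preceding proposition (the finitary hypothesis making $\cL$ elementary), and the backward direction combines Lemma \ref{basicfact}(2) for strict $\cL$-atomicity with Lemma \ref{sepInL}(2) to deduce $M\in\cL$, which is exactly the one new point the paper's proof supplies. Your closing observation that the finitary restriction is only needed for the forward (L\"owenheim--Skolem) direction is accurate and consistent with how the paper deploys the hypotheses.
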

\begin{proof}
 All that's missing in the proposition is that if $M$ is so separable, it has to be in $\cL$. But this follows from Lemma \ref{sepInL} (2).
\end{proof}

For definable subcategories this can  be reformulated as follows. 

\begin{cor}
Suppose $R$ is countable, $\cL$ is a definable subcategory (i.e., $\cL=\langle\cL\rangle$) and  $\cK$ is its elementary dual.

 A module $M$ is strict $\cL$-atomic (= strict $\cK$-Mittag-Leffler) and a member of $\cL$ if and only if it is countably locally separable by countably generated strict  $\cL$-atomic modules that are members of $\cL$. 
  \qed
 \end{cor}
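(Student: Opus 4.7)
The plan is to read this corollary as a direct specialization of the preceding one; what needs to be verified is that every definable subcategory $\cL$ meets the hypothesis of that result, namely that $\cL$ is axiomatizable by \emph{finitary} implications of the form considered in Lemma \ref{sepInL}(2). This is a standard fact: by the fundamental characterization of definable subcategories (see, e.g., \cite[\S 3.4.1]{P2}), $\cL=\langle\cL\rangle$ is axiomatized by finitary pp-implications $\phi \to \psi$ with $\phi,\psi$ pp formulas. Each such sentence fits trivially into the shape required by Lemma \ref{sepInL}(2)---take the antecedent to be $\Phi := \phi$, viewed as a (degenerate) conjunction of disjunctions of pp formulas, and the consequent to be $\Psi := \psi$, viewed as a (degenerate) disjunction of conjunctions of existential formulas (every pp formula being existential). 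As the axioms are finitary, L\"owenheim-Skolem applies, and hence the preceding corollary pertains.

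The only remaining cosmetic discrepancy is that the preceding corollary speaks of \emph{countable} strict $\cL$-atomic separating modules in $\cL$, whereas here we require \emph{countably generated} ones. But over a countable ring the two notions coincide: any countably generated $R$-module has cardinality at most $|R|\cdot\aleph_0=\aleph_0$, while any countable module is trivially countably generated (by its underlying set). This completes the reduction, and under the equivalence just noted the two formulations say the same thing. I do not foresee any real obstacle, the argument being essentially a repackaging of already established results; the only external input is the standard axiomatization of definable subcategories by finitary pp-implications.
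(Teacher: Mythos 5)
Your proposal is correct and follows exactly the route the paper intends: the corollary is stated with \qed precisely because it is the preceding corollary specialized to $\cL=\langle\cL\rangle$, using that a definable subcategory is axiomatized by finitary pp-implications $\phi\to\psi$ (which fit the shape of Lemma \ref{sepInL}(2)), and that over a countable ring ``countable'' and ``countably generated'' coincide. You have simply made explicit the routine verifications the paper leaves to the reader.
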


%


\begin{qu}
 Is this true over uncountable rings?
\end{qu}

%

\subsection{Special F-classes}\label{specialF} For $\cL$, the class $_R\langle\flat\rangle$ of \emph{quasi-flat} modules, the previous proposition implies that, over a countable ring, a quasi-flat module is strict $_R\langle\flat\rangle$-atomic if and only if it is countably separable by countably generated strict $_R\langle\flat\rangle$-atomic modules. However, we can drop the countability assumption on $R$ in this case, as follows from \cite[Prop.3.7]{Tf} and get a better separation result at the same time---namely by flats rather than quasi-flats. Analyzing that proof, we have arrived at the following abstract version that tells the gist of it. We will work in special F-class as introduced in \S\ref{specialFdef} with terminology and notation from Definition \ref{pgen}.
 This is the abstract version of \cite[Prop.\,3.7]{Tf}:

\begin{prop}
Every special F-class is (countably) self-separating.
%
\end{prop}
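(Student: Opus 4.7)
The goal is, given an $\cF$-atomic module $M\in\cF$ and a countable subset $A_0\subseteq M$, to produce a countably generated $\cF$-pure submodule $N\supseteq A_0$ with $N\in\cF$ and $N$ $\cF$-atomic (hence strict $\cF$-atomic by the `Strict' Lemma \ref{StrictLemma}). I would build $N$ as the union of a chain $A_0\subseteq A_1\subseteq \cdots$ of countable subsets of $M$, where at each stage I adjoin finitely many carefully chosen witnesses per tuple, driven by the $\cF$-generators of pp types in $M$.

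To choose these witnesses, for each finite tuple $\bar a$ in $M$ fix an $\cF$-generator $\phi_{\bar a}$ of $\pp_M(\bar a)$ (which exists by $\cF$-atomicity of $M$) and consider its F-axiom $\phi_{\bar a}\to\bigvee_k \phi^{(k)}_{\bar a}$ in standard form. Since $M\in\cF$ and $M\models\phi_{\bar a}(\bar a)$, some disjunct $\phi^{(k)}_{\bar a}$ holds of $\bar a$ in $M$; fix one, call it $\chi_{\bar a}$. The standard form gives $\chi_{\bar a}\leq\phi_{\bar a}$, while the generator property gives $\phi_{\bar a}\leq_\cF\chi_{\bar a}$, so $\chi_{\bar a}\equiv_\cF\phi_{\bar a}$. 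Form $A_{n+1}$ from $A_n$ by adjoining, for every finite tuple $\bar a$ from $A_n$, finitely many witnesses in $M$ for the existential quantifiers of $\chi_{\bar a}$, and then closing under the $R$-module operations. Each $A_n$ is countable, so $N=\bigcup_n A_n$ is countably generated. By construction $N\models\chi_{\bar n}(\bar n)$ for every tuple $\bar n$ from $N$, and this at once yields $\cF$-purity of $N\subseteq M$ (for $\phi\in\pp_M(\bar n)$: $\chi_{\bar n}\equiv_\cF\phi_{\bar n}\leq_\cF\phi$) and $\cF$-atomicity of $N$ (the same $\chi_{\bar n}$ is an $\cF$-generator of $\pp_N(\bar n)$, using $\pp_N(\bar n)\subseteq\pp_M(\bar n)$).

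The last---and most delicate---step is to verify that $N\in\cF$. Take any F-axiom $\phi\to\bigvee_j\phi_j$ and tuple $\bar n$ from $N$ with $N\models\phi(\bar n)$. Then $\phi\in\pp_M(\bar n)$ and hence $\phi_{\bar n}\leq_\cF\phi$. This is where \emph{specialty} is indispensable: it upgrades this $\cF$-implication to a trivial implication, so every disjunct of $\phi_{\bar n}$'s axiom---in particular the chosen $\chi_{\bar n}$---\emph{logically} implies some $\phi_j$. Since $N\models\chi_{\bar n}(\bar n)$, logical implication gives $N\models\phi_j(\bar n)$, and the axiom holds at $\bar n$ in $N$. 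The main obstacle lies precisely in this step: without specialty one would naively need to add witnesses for every pp formula satisfied by every tuple, an uncountable task in general; specialty guarantees instead that the witnesses for a \emph{single} $\chi_{\bar n}$ per tuple automatically serve as witnesses for every other relevant disjunct $\phi_j$, keeping each stage countable.
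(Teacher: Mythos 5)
Your proof is correct and follows essentially the same route as the paper's: replace the $\cF$-generator of each pp type by an $\cF$-equivalent disjunct of the conclusio of its axiom, close off under witnesses for these disjuncts to get a countably generated $\cF$-pure, $\cF$-atomic submodule $N$, and then use specialty to turn the $\cF$-implication $\phi_{\bar n}\leq_\cF\psi$ into a logical implication from $\chi_{\bar n}$ so that the axioms hold in $N$. The only difference is that you spell out the witness-closing construction that the paper delegates to the proofs of \cite[Props.\,3.5 and 3.7]{Tf}, and you work with tuples where the paper restricts to single elements.
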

%
%
%
 \begin{proof} Referring to
 the proof of \cite[Prop.\,3.7]{Tf}, we start also here by modifying the $\cF$-generator $\phi$ of the type $\pp_M(a)$ from the proof of \cite[Prop.\,3.5]{Tf}  (= Fact \ref{ctblesep}) by a disjunct $\phi_{i_a}\in\pp_M(a)$ of the conclusio, which too is an $\cF$-generator of $\pp_M(a)$. Taking witnesses for $\phi_{i_a}$ in each of the countable steps and then closing off by the countably generated submodule $N$ of $M$, we see, as in \cite[Prop.\,3.7]{Tf}, that $N$ is $\cF$-atomic and $\cF$-pure in $M$. It remains to verify that $N\in\cF$, for which we will verify the axioms. 
 
Let  $\psi \to \bigvee \psi_j$ 
  be one of them. Suppose $a\in N$ satisfies  $\psi$. We have to show, $a$ also satisfies one of the $ \psi_{j}$ in $N$. 
 
 As $\psi\in\pp_M(a)$, we have  $\phi\leq_\cF \psi$ (even $\phi_{i_a}\leq_\cF \psi$). Since $\cF$ is special, this can happen only if every disjunct $\phi_{i}$ of $\phi$ implies some disjunct of $\psi$. In particular, $\phi_{i_a}$ does, which shows that $a$ satisfies  some disjunct of $\psi$, as desired.
 \end{proof}

\begin{cor}\label{pureresolvself} Every purely resolving class is (countably) self-separating.  
\end{cor}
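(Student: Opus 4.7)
The plan is short: this corollary should drop out immediately by combining two results already established in the paper. First, Lemma \ref{pureresolvF} asserts that every purely resolving class is a special F-class, with the explicit axioms $\phi \to \bigvee \ppf_\cB\phi$ (one per pp formula $\phi$) being in standard form and satisfying the trivial-implication property. Second, the Proposition just proved shows that every special F-class is (countably) self-separating.

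So my proof would consist of a single sentence: given a purely resolving class $\tC = \PGen\cB$, apply Lemma \ref{pureresolvF} to conclude that $\tC$ is a special F-class, then invoke the preceding Proposition to conclude that $\tC$ is countably self-separating, i.e., every $\tC$-atomic module in $\tC$ is countably $\tC$-pure separable by countably generated (strict) $\tC$-atomic modules lying in $\tC$.

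I do not anticipate any obstacle here; the work has already been done. The only thing one might want to double-check is that the notion of ``purely resolving'' in force at this point (Definition \ref{pgen}(3), purely generated by pure-projectives) is exactly what Lemma \ref{pureresolvF} treats, and that the ambient convention that $\cB$ is closed under finite direct sum (Definition \ref{pgen}(4)) is in effect, so that the special-F-class axiomatization applies uniformly. Both are indeed in place. Hence the proof reduces to the one-line chain: purely resolving $\Longrightarrow$ special F-class (Lemma \ref{pureresolvF}) $\Longrightarrow$ countably self-separating (preceding Proposition). \qed
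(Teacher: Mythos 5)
Your proof is correct and is exactly the paper's own argument: the published proof of this corollary is the single line ``Use Lemma \ref{pureresolvF},'' which is precisely your chain of purely resolving $\Rightarrow$ special F-class $\Rightarrow$ countably self-separating via the preceding Proposition. Nothing further is needed.
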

\begin{proof}
Use Lemma \ref{pureresolvF}.
\end{proof}

\begin{cor}\cite[Prop.\,3.7]{Tf}
Every flat $_R\flat$-atomic module is countably $_R\flat$-pure separable by countably generated flat (strict) $_R\flat$-atomic modules.
\end{cor}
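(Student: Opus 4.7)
The plan is to deduce this corollary as an immediate specialization of the preceding Corollary \ref{pureresolvself}, applied to the class $\cL = {}_R\flat$ of flat left $R$-modules.

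First I would verify that $_R\flat$ is a purely resolving class. This is essentially recorded in the paper just before Corollary \ref{flatISstrict}: the class of flat modules is purely generated by the finitely generated projectives, and finitely generated projectives are pure-projective, so by Definition \ref{pgen}(3), $_R\flat$ is purely resolving.

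Next, I would invoke Corollary \ref{pureresolvself} for $\cL = {}_R\flat$, which tells us that this class is (countably) self-separating. Unpacking the definition from \S\ref{ctbleLsep}, this means exactly that every $_R\flat$-atomic module that happens to lie in $_R\flat$ (i.e., every \emph{flat} $_R\flat$-atomic module) is countably $_R\flat$-pure separable by countably generated $_R\flat$-atomic modules that themselves belong to $_R\flat$ (i.e., are flat).

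Finally, the parenthetical word \emph{strict} in the conclusion is free of charge: any countably generated $_R\flat$-atomic module is automatically strict $_R\flat$-atomic by the `Strict' Lemma \ref{StrictLemma}. There is no real obstacle in this argument since all the hard work has been done in Lemma \ref{pureresolvF} (purely resolving classes are special F-classes) and in the preceding proposition on self-separation of special F-classes; the corollary is simply the announcement that specializing to $_R\flat$ recovers the original result \cite[Prop.\,3.7]{Tf} that motivated the abstraction.
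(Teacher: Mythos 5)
Your proposal is correct and follows essentially the same route as the paper: establish that $_R\flat$ is purely resolving (being purely generated by the finitely generated projectives), then apply Corollary \ref{pureresolvself} and unpack the definition of countable self-separation. Your additional remarks on why the parenthetical ``strict'' comes for free (the `Strict' Lemma \ref{StrictLemma}) merely make explicit what is already built into the definition of self-separation, so nothing is missing.
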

\begin{proof}
By Corollary \ref{flatISstrict}, $_R\flat$ is purely resolving.
\end{proof}
%


%
\subsection{Relative Mittag-Leffler modules in purely resolving classes} Recall from Corollary \ref{pureresolvISstrict}: strict $\cL$-atomic modules are strict atomic---i.e., strict Mittag-Leffler---provided they are members of $\cL$ and $\cL$ is purely resolving. We are going to show the same for the non-stict version. 

\begin{thm}\label{LMLisML}
 Suppose $\cL$ is purely resolving.
 
 $\cL$-atomic modules in $\cL$ are Mittag-Leffler.
\end{thm}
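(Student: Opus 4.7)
The plan is to combine the self-separation property of purely resolving classes with the strict version of the theorem already established in Corollary \ref{pureresolvISstrict}, and then invoke the separation-to-atomicity criterion of Lemma \ref{basicfact}(1). The point is that self-separation lets us reduce the $\cL$-atomic hypothesis on $M$ to a collection of countably generated separating submodules that are strict $\cL$-atomic members of $\cL$, and for such submodules we already know the strict, unrelativized conclusion holds.

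So first I would fix an $\cL$-atomic module $M$ in $\cL$. By Corollary \ref{pureresolvself}, the class $\cL$ is countably self-separating; unpacking the definition, every finite (indeed, every countable) subset of $M$ is contained in a countably generated, $\cL$-pure submodule $N$ of $M$ which is itself strict $\cL$-atomic and a member of $\cL$. Because $N \in \cL \subseteq \langle\cL\rangle$, Remark \ref{LpureInL}(a) promotes $\cL$-purity to ordinary purity, so $N$ is a pure submodule of $M$.

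Next I would upgrade $N$ from strict $\cL$-atomic to strict atomic: since $\cL$ is purely resolving and $N$ is a strict $\cL$-atomic member of $\cL$, Corollary \ref{pureresolvISstrict} yields that $N$ is strict Mittag-Leffler, hence in particular Mittag-Leffler (i.e., $\RMod$-atomic), by the remark following Definition \ref{Ffree} that strict atomicity implies atomicity. Thus every finite subset of $M$ sits inside a pure Mittag-Leffler submodule, which is precisely to say that $M$ is pure separable by atomic modules.

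Finally, applying Lemma \ref{basicfact}(1) in the unrelativized case $\cL = \RMod$ (where $\cL$-purity coincides with ordinary purity and $\cL$-atomicity coincides with atomicity = Mittag-Leffler), I conclude that $M$ is itself Mittag-Leffler. No step presents a serious obstacle here; the only thing to watch is the bookkeeping at the interface between the relative $\cL$-version and the absolute $\RMod$-version, and this is exactly what Remark \ref{LpureInL}(a) handles, letting the two purity notions agree on the separating submodules that live in $\cL$.
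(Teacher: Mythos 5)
Your proof is correct and is essentially the paper's own argument: self-separation of purely resolving classes (Corollary \ref{pureresolvself}) yields pure, countably generated, strict $\cL$-atomic separating submodules in $\cL$, which Corollary \ref{pureresolvISstrict} upgrades to (strict) Mittag-Leffler, and Lemma \ref{basicfact}(1) in the unrelativized case finishes. The paper merely leaves the appeal to Corollary \ref{pureresolvISstrict} implicit ("as these are strict atomic"), which you spell out.
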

\begin{proof}
Let $M$ be   $\cL$-atomic. By Corollary \ref{pureresolvself}, $M$ is countably $\cL$-pure separable by strict $\cL$-atomic modules in $\cL$. As these are strict atomic and $\cL$-purity is just purity by Remark \ref{LpureInL}, the module $M$ is pure separable by atomic modules, hence atomic itself, which means Mittag-Leffler. 
\end{proof}

\begin{cor} \cite[Thm.\,3.10]{Tf}
 Flat $\flat$-atomic modules are Mittag-Leffler.\qed
\end{cor}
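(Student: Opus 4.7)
The plan is to apply Theorem \ref{LMLisML} directly, taking $\cL = {}_R\flat$. For this, the only thing I need to verify is that the class of flat modules is purely resolving in the sense of Definition \ref{pgen}(3), i.e., purely generated by pure-projective modules. But this has already been recorded in the proof of Corollary \ref{flatISstrict}: the class ${}_R\flat$ is purely generated by the finitely generated projectives (a reference to \cite{HR} is given there), which are certainly pure-projective, so ${}_R\flat$ is purely resolving.

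With this in hand, the corollary is immediate: if $M$ is a flat ${}_R\flat$-atomic module, then $M \in {}_R\flat$ and $M$ is $\cL$-atomic for $\cL = {}_R\flat$, so Theorem \ref{LMLisML} yields that $M$ is Mittag-Leffler.

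There is really no obstacle here; the only step of substance has been carried out already in the theorem, whose proof combined self-separation for purely resolving classes (Corollary \ref{pureresolvself}) with Remark \ref{LpureInL} to collapse $\cL$-purity to ordinary purity inside $\cL$, and then used that atomicity is inherited by pure separability by atomic modules (Lemma \ref{basicfact}(1)). So the corollary is just the specialization of that mechanism to the flat case.
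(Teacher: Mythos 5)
Your proposal is correct and is exactly the argument the paper intends: the corollary is stated with a \qed precisely because it is the instance of Theorem \ref{LMLisML} for $\cL={}_R\flat$, with the pure resolvability of the flat modules already recorded in the proof of Corollary \ref{flatISstrict}. Nothing is missing.
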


\section{Finite separation in
purely generated classes}\label{SepPGen}

\subsection{The first separation theorem}

\begin{definition}\textbf{}

\begin{enumerate}
 \item A \emph{separation pair} is an ordered pair $(\cL, \cB)$ of classes of modules, $\cB\subseteq\cL$, where $\cL$ is  closed under direct sum and $\cL$-pure epimorphic images and $\cB$ is a \emph{small}\,\footnote{I.e., containing only a set of distinct isomorphism types, in other words, $\cB/$$\cong$ is a set.} class of strict $\cL$-atomic modules closed under finite direct sum.

 
 \item  A \emph{$\cB$-free module} is simply a direct sum of modules from $\cB$. The class of all of them is denoted $\oplus\cB$.
\item A \emph{large $\cB$-free module} is a direct sum of direct powers of infinitely many copies of each isomorphism type of modules from $\cB$, i.e., a module of the form $\oplus_{B\in\cB^*} B^{(\kappa_\cB)}$ with all $\kappa_\cB$ infinite, where $\cB^*$ is any transversal in $\cB/$$\cong$.
\item We denote by $P^*$ the large $\cB$-free module $\oplus_{B\in\cB^*} B^{(\omega)}$. 
\end{enumerate}
\end{definition} 

 
\begin{notat}\label{setting}  Throughout this subsection,  $(\cL, \cB)$ is a separation pair. Following \cite[\S 2]{HR},  we let $\cC=\Add\cB$  and  $\tC$ be the class of all pure epimorphic images of modules from $\cC$ (equivalently, from $\oplus\cB$),  i.e., the class purely generated by $\cB$, which  is therefore also denoted $\PGen\cB$.
\end{notat}

\begin{rem}\text{}
\begin{enumerate}[\upshape (a)]
 \item As $\cB$ is closed under finite direct sum, $\cB$-free modules are (finitely) $\cB$-separable.
 \item Relaxing pure generation to $\cL$-pure generation would not gain one anything, for $\cC\subseteq\cL$, and $\cL$-pure epimorphisms emanating from modules in $\cL$ are pure epimorphisms.
\end{enumerate}

\end{rem}

\begin{lem}\label{Add}  \cite[Prop.2.5]{Tf}. 
 Every countably generated (strict) $\cL$-atomic module  in $\PGen\cB$ is contained in $\Add \cB$, i.e., a direct summand of a $\cB$-free module (which can be taken to be large) and thus a direct summand of a module (finitely) separable by  $\cB$.
 \end{lem}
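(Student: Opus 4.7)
The plan is to realize $M$ as a direct summand of a direct sum of members of $\cB$. By the `Strict' Lemma \ref{StrictLemma}, the countably generated $\cL$-atomic module $M$ is in fact strict $\cL$-atomic. Since $M \in \tC = \PGen\cB$, fix a pure epimorphism $g : F \twoheadrightarrow M$ with $F$ a direct sum of members of $\cB$; because $\cL \supseteq \cB$ is closed under direct sums, $F \in \cL$. The whole task reduces to producing a section $s : M \to F$ of $g$, for then $M$ is a direct summand of $F$ and hence in $\Add\cB$.

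Enumerate a generating sequence $m_0, m_1, \dots$ of $M$ and write $\bar m_n = (m_0, \dots, m_n)$, $M_n = \langle \bar m_n\rangle$. Strict $\cL$-atomicity supplies pp formulas $\phi_n$ freely realized by $\bar m_n$ for $\cL$ in $M$. A key observation is that $\phi_n$ implies $\exists x_{n+1}\, \phi_{n+1}$ throughout $\cL$: for any $\bar a \in \phi_n(L)$ with $L \in \cL$, free realization yields a map $(M, \bar m_n) \to (L, \bar a)$ under which the image of $m_{n+1}$ witnesses $\bar a \in (\exists x_{n+1}\, \phi_{n+1})(L)$, since $\bar m_{n+1}$ satisfies $\phi_{n+1}$ in $M$.

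I next build inductively a coherent chain of lifts $\bar b_n = (b_0, \dots, b_n) \in F$ with $g(\bar b_n) = \bar m_n$, $\bar b_n \in \phi_n(F)$, and $\bar b_{n+1}$ extending $\bar b_n$. The base case is just purity of $g$ applied to $m_0 \in \phi_0(M)$. For the step, $F \in \cL$ together with the implication above delivers some $b \in F$ with $(\bar b_n, b) \in \phi_{n+1}(F)$. Both $(\bar m_n, m_{n+1})$ and $(\bar m_n, g(b))$ lie in $\phi_{n+1}(M)$, so their difference puts $c := m_{n+1} - g(b)$ in $\phi_{n+1}(\bar 0, x)(M)$ (the unary pp formula obtained by substituting $\bar 0$ for the first $n+1$ entries). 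A further appeal to purity then furnishes $b' \in F$ with $g(b') = c$ and $(\bar 0, b') \in \phi_{n+1}(F)$, whence $b_{n+1} := b + b'$ projects to $m_{n+1}$ under $g$ and extends $\bar b_n$ to a tuple in $\phi_{n+1}(F)$.

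Free realization now gives, for each $n$, a map $h_n : M \to F$ with $h_n(\bar m_n) = \bar b_n$. Any two of the $h_n$'s agree on $M_n$, since they coincide on the generators $m_0, \dots, m_n$; they therefore assemble into a well-defined homomorphism $s : M \to F$ with $gs = \mathrm{id}_M$, i.e., the desired section. Embedding $F$ into a large $\cB$-free module by padding with further copies from $\cB^*$ preserves $M$ as a direct summand, and closure of $\cB$ under finite direct sum makes any $\cB$-free module finitely separable by $\cB$: each finite subset lies in a finite subsum, which is a split summand belonging to $\cB$. The main obstacle is the simultaneous matching in the inductive step, namely arranging $b_{n+1}$ so as to both project to $m_{n+1}$ under $g$ and extend the earlier lift inside $\phi_{n+1}(F)$; this is resolved through the interplay of free realization with purity of $g$.
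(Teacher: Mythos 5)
Your proof is correct and follows the same route as the paper: realize $M$ as a pure epimorphic image of a $\cB$-free module $F\in\cL$ and split that epimorphism, the only difference being that the paper delegates the splitting to a citation of \cite[Prop.\,2.5]{Tf}, whereas you prove it inline. Your back-and-forth induction (using $\phi_n\leq_\cL\exists x_{n+1}\,\phi_{n+1}$, purity of $g$ to correct the lift by $c=m_{n+1}-g(b)$, and the compatible maps $h_n$ glued along $M=\bigcup_n M_n$) is exactly the standard argument behind that cited proposition and is carried out without gaps.
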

\begin{proof}
 Being in $\PGen\cB$, the module $N$ is a pure epimorphic image of a $\cB$-free module $P$. But pure epimorphisms emanating from $\cL$---this is where $\cB\subseteq\cL$ is needed---onto countably generated  $\cL$-atomic modules split by \cite[Prop.2.5]{Tf}. 
 \end{proof}

\begin{lem}[Eilenberg's Trick]
For every  $\cL$-atomic module $N\in \PGen\cB$ there is a large $\cB$-free module $P$ such that $N\oplus P \cong P$. 

If $N$ is, in addition, countably generated, then $N\oplus P^*\cong P^*$.
 \end{lem}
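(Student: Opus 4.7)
The argument is the classical Eilenberg swindle, applied after realizing $N$ as a direct summand of a suitable $\cB$-free module via Lemma \ref{Add}.

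For the first part, take a decomposition $Q = N \oplus N'$ supplied by Lemma \ref{Add}, with $Q$ large $\cB$-free, say $Q \cong \bigoplus_{B \in \cB^*} B^{(\kappa_B)}$ with each $\kappa_B$ infinite. Setting $P := Q^{(\omega)}$, the usual reindexing $N^{(\omega)} \cong N \oplus N^{(\omega)}$ gives
\[
P \cong N^{(\omega)} \oplus N'^{(\omega)} \cong N \oplus \bigl(N^{(\omega)} \oplus N'^{(\omega)}\bigr) \cong N \oplus P.
\]
Since $\kappa_B \cdot \omega = \kappa_B$ for infinite $\kappa_B$, $P = Q^{(\omega)} \cong \bigoplus_{B \in \cB^*} B^{(\kappa_B)}$ is still large $\cB$-free, as required.

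For the second part, with $N$ countably generated, Lemma \ref{Add} lets us take $Q$ to be a \emph{countable} $\cB$-free module, hence of the form $\bigoplus_{B \in \cB^*} B^{(n_B)}$ with each $n_B \leq \omega$. Absorbing $Q$ into $P^*$,
\[
Q \oplus P^* \cong \bigoplus_{B \in \cB^*} B^{(n_B + \omega)} = \bigoplus_{B \in \cB^*} B^{(\omega)} = P^*,
\]
so $N$ is a direct summand of $P^*$. A second Eilenberg swindle, now relative to $P^*$, combined with the identity $(P^*)^{(\omega)} \cong \bigoplus_{B \in \cB^*} B^{(\omega \cdot \omega)} = P^*$, yields
\[
N \oplus P^* \cong N \oplus (P^*)^{(\omega)} \cong (P^*)^{(\omega)} \cong P^*.
\]

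The core identity driving everything is the absorption $N \oplus N^{(\omega)} \cong N^{(\omega)}$, which is immediate. The only real subtlety lies in the cardinal arithmetic for the summands indexed by $\cB^*$: one must check that $P$ remains large $\cB$-free in the first part (using $\kappa_B \cdot \omega = \kappa_B$ when $\kappa_B$ is infinite) and that the specific $P^*$ is preserved under $(-)^{(\omega)}$ in the second (using $\omega \cdot \omega = \omega$). Neither presents an obstacle; both are forced by the shape of the large $\cB$-free modules dictated by the definition.
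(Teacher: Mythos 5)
Your argument is correct and is essentially the paper's own: realize $N$ as a direct summand of a $\cB$-free module via Lemma \ref{Add} and run the Eilenberg swindle, using that large $\cB$-free modules (and $P^*$ in particular) absorb countable powers of themselves; the only cosmetic difference is that you set $P:=Q^{(\omega)}$ and swindle on $N^{(\omega)}$, whereas the paper takes $P$ large from the outset and rewrites $P\cong P^{(\omega)}\cong N\oplus(M\oplus N)^{(\omega)}\cong N\oplus P$. One caveat that applies equally to both proofs: the decomposition $Q=N\oplus N'$ is supplied by Lemma \ref{Add} only for \emph{countably generated} $N$ (indeed, for general $\cL$-atomic $N\in\PGen\cB$ the conclusion $N\oplus P\cong P$ would force $N\in\Add\cB$, which fails already for Mittag-Leffler abelian groups that are not pure-projective), so the first assertion should be read with that hypothesis, which is all that is used in the Separation Lemma.
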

 \begin{proof}  By the previous lemma, we have  $P=N\oplus M$ for some $M$. Now comes Eilenberg's trick: as $P^{(\omega)}\cong P$ we may rewrite  $P$ as $(N\oplus M)^{(\omega)}\cong N\oplus (M\oplus N)^{(\omega)}\cong N\oplus P$. In case $N$ is countably generated, $P=P^*$ works.
\end{proof}


\begin{lem} [The Separation Lemma]\textbf{} 
If $M$ is an $\cL$-atomic module in $\PGen\cB$, then $M\oplus P^*$ is  $\cL$-pure separable by  $\cB$.
\end{lem}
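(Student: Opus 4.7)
The strategy is to reduce the claim to Eilenberg's trick. Given a finite tuple $\bar z=(\bar m,\bar p)$ in $M\oplus P^*$, the plan is to produce a countably generated submodule $N\subseteq M$ containing $\bar m$ such that (a)~$N$ is $\cL$-pure in $M$, (b)~$N$ is $\cL$-atomic, and (c)~$N\in\PGen\cB$. Granted such $N$, the countably generated form of Eilenberg's trick gives $N\oplus P^*\cong P^*$, so $N\oplus P^*$ is itself large $\cB$-free. Then $\bar z$, as a finite tuple in a $\cB$-free module, lies in a finite direct sum of coordinate $\cB$-summands, which by closure of $\cB$ under finite direct sum is a single $B\in\cB$ and a direct summand of $N\oplus P^*$. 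Hence $B$ is pure in $N\oplus P^*$. Moreover, $N\in\PGen\cB\subseteq\cL\subseteq\langle\cL\rangle$ (the first inclusion uses $\cB\subseteq\cL$ together with closure of $\cL$ under direct sum and $\cL$-pure epimorphic images, noting that pure epimorphisms from $\cL$-sources are $\cL$-pure), so Remark~\ref{LpureInL}(a) upgrades the $\cL$-pure inclusion $N\hookrightarrow M$ to an ordinary pure inclusion; consequently $N\oplus P^*$ is pure in $M\oplus P^*$. Composing yields $B$ pure---a fortiori $\cL$-pure---in $M\oplus P^*$ and containing $\bar z$, which is exactly what $\cL$-pure separation by $\cB$ requires.

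The heart of the proof is the construction of $N$. Fix a pure epimorphism $\pi\colon\bigoplus_{i\in I}B_i\twoheadrightarrow M$ with $B_i\in\cB$ witnessing $M\in\PGen\cB$, and a lift $\bar b\in\bigoplus_{i\in I}B_i$ of $\bar m$ with finite support $J_0\subseteq I$. I would then perform a L\"owenheim--Skolem back-and-forth, building a chain $J_0\subseteq J_1\subseteq\cdots$ of countable subsets of $I$ with $N_n:=\pi(\bigoplus_{i\in J_n}B_i)$, where at each step $J_{n+1}$ is obtained from $J_n$ by two kinds of countable enlargement: \emph{$P$-side closure}---for every pp-formula satisfied in $M$ by a finite tuple of $N_n$, use purity of $\pi$ to pick a $\pi$-preimage satisfying the same formula and add its finite support to $J_{n+1}$; \emph{$M$-side closure}---apply Fact~\ref{ctblesep} to the $\cL$-atomic $M$ to obtain a countable $\cL$-pure separator in $M$ for a tuple of $N_n$, then enlarge $J_{n+1}$ so that $N_{n+1}$ contains it (pulling back via $\pi$ coordinate by coordinate). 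Setting $J=\bigcup_nJ_n$ and $N=\pi(\bigoplus_{i\in J}B_i)$, the $P$-side closures make $\pi|_{\oplus_{i\in J}B_i}\colon\bigoplus_{i\in J}B_i\twoheadrightarrow N$ a pure epimorphism, so $N\in\PGen\cB$; the $M$-side closures make $N$ $\cL$-pure in $M$. $\cL$-atomicity of $N$ then follows from $\cL$-atomicity of $M$ by \cite[Cor.\,6.3]{MLII} (as noted after Lemma~\ref{Lpureofstrict}), since $\cL$-pure submodules of $\cL$-atomic modules are $\cL$-atomic.

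The main obstacle is arranging this simultaneous back-and-forth: each $M$-side enlargement introduces new elements into $N_n$ that call for new $\pi$-preimages on the $P$-side, while each $P$-side enlargement may enlarge $N_n$ and so trigger new $\cL$-pure closure demands---and one must verify that countably many alternations close things off and that the final restricted $\pi$ is genuinely pure onto $N$. Once $N$ has been produced, the Eilenberg step and the extraction of a $\cB$-summand in the resulting $\cB$-free module are essentially formal, as is the purity chase propagating the summand through $N\oplus P^*\hookrightarrow M\oplus P^*$.
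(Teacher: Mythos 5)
Your overall route is the same as the paper's: reduce to a countably generated $\cL$-pure $\cL$-atomic submodule $N\in\PGen\cB$ of $M$ containing the $M$-coordinates of the given tuple, apply Eilenberg's trick to get $N\oplus P^*\cong P^*$, and extract a $\cB$-direct summand containing the tuple, which is then pure in $M\oplus P^*$. (The paper first splits the $P^*$-coordinates off into a $\cB$-summand and only then treats $\bar m$; you keep the tuple together, which makes no difference.) The one place where you genuinely add content is the explicit construction of $N$: the paper simply asserts, ``by hypothesis,'' that $\bar m$ can be $\cL$-pure separated by a countably generated $\cL$-atomic $N\in\PGen\cB$, which is a strengthening of Fact~\ref{ctblesep} that is not spelled out there. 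So the real question is whether your back-and-forth delivers such an $N$.

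As written it does not quite. In the $P$-side closure you demand, for \emph{every} pp formula satisfied in $M$ by a finite tuple of $N_n$, a $\pi$-preimage satisfying that formula. Over an uncountable ring there are $|R|+\aleph_0$ many pp formulas, so $J_{n+1}$ need not remain countable, and then neither $J$ nor $N=\pi(\oplus_{i\in J}B_i)$ is countable/countably generated --- but countable generation of $N$ is exactly what the $P^*$-form of Eilenberg's trick requires (otherwise you only get $N\oplus P\cong P$ for some larger $\cB$-free $P$). The repair uses two reductions already present in the paper: (i) since $M$ is $\cL$-atomic, the pp type of each tuple is $\cL$-generated by a single formula, and by the usual matrix manipulation it suffices to secure, for each tuple from a countable generating set of $N_n$, a $\pi$-preimage satisfying that one generator (and, on the $M$-side, witnesses in $M$ for that one generator rather than a wholesale application of Fact~\ref{ctblesep}); this only makes $\pi|\colon\oplus_{i\in J}B_i\to N$ an $\cL$-pure epimorphism, but (ii) $\oplus_{i\in J}B_i\in\cL$, and $\cL$-pure epimorphisms emanating from modules in $\cL$ are pure (Remark~\ref{LpureInL}(b)), so $N\in\PGen\cB$ after all. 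A residual caveat: even with $J$ countable, $N=\pi(\oplus_{i\in J}B_i)$ is countably generated only if the members of $\cB$ are; in general one must cut $N$ down further to the countably many elements actually needed and rerun the closure. With these corrections your argument is complete, and the endgame (Eilenberg, then the purity chase through $B\subseteq N\oplus P^*\subseteq M\oplus P^*$) is sound --- indeed it yields pure, not merely $\cL$-pure, separation, exactly as the paper's version does.
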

\begin{proof}
  Consider a finite subset (a tuple) of $M\oplus P^{*}$ and denote its projections onto $M$ and $P^{*}$ by $\br m$ and $\br p$, respectively. As $P^{*}$ is separable by modules from $\cB$, the tuple $\br p$ is contained in a  direct summand $B\in\cB$ of $P^{*}$. By the basic properties of $P^{*}$, we can split it into $P^{*}_{0} \oplus B$, where $P^{*}_{0}\cong P^{*}$, and then write $M\oplus P^{*}=M\oplus P^{*}_{0} \oplus B$. It remains to $\cL$-pure-separate $\br m$ in $M\oplus P^{*}_{0}\cong M\oplus P^{*}$ by a  module from $\cB$.

By hypothesis, we can $\cL$-pure separate  $\br m$ in $M$ by a countably generated $\cL$-atomic module $N\in \PGen\cB$, an $\cL$-pure submodule of $M$.

Then $N\oplus P^{*}$ is an $\cL$-pure submodule of $M\oplus P^{*}$ containing $\br m$, and it therefore suffices to $\cL$-pure-separate $\br m$ by a  module from $\cB$ in $N\oplus P^{*}$. 
Now Eilenberg's trick shows us that $N\oplus P^{*}$ is isomorphic to $P^*$ and thus (even) separable by  $\cB$. 
\end{proof}

\begin{thm}\label{Thm.1} Suppose $(\cL, \cB)$ is a separation pair.

\begin{enumerate}
 \item  Every $\cL$-atomic module in $\PGen\cB$ is a  direct summand of a module that is (finitely) $\cL$-pure separable by $\cB$ (and conversely).

 \item Every  strict $\cL$-atomic module in $\PGen\cB$ is a   direct summand of  a module that is locally separable by $\cB$ (and conversely).

\end{enumerate}
\end{thm}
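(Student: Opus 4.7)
The plan is to derive both directions of the theorem from the Separation Lemma together with closure properties of the classes of $\cL$-atomic modules, strict $\cL$-atomic modules, and $\PGen\cB$.

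For the forward direction of (1), the Separation Lemma applies directly: given an $\cL$-atomic $M \in \PGen\cB$, it says that $M \oplus P^*$ is $\cL$-pure separable by $\cB$, and this already exhibits $M$ as a direct summand of such a module. For (2), the same application gives $\cL$-pure separability of $M \oplus P^*$ by $\cB$; since $M$ is strict $\cL$-atomic by hypothesis, $P^*$ is a direct sum of strict $\cL$-atomic modules from $\cB$, and the strict $\cL$-atomic class is closed under direct sums, the module $M \oplus P^*$ is itself strict $\cL$-atomic. With $\cB \subseteq \cL$, Lemma \ref{locsep}(1) then upgrades $\cL$-pure separability by $\cB$ to local separability by $\cB$, completing this half.

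For the converses, suppose $M$ is a direct summand of a module $N$ with the indicated separation property. In case (1), $N$ is $\cL$-pure separable by modules in $\cB$, which are $\cL$-atomic (being strict $\cL$-atomic), so Lemma \ref{basicfact}(1) gives that $N$ is $\cL$-atomic; direct summands are pure and $\cL$-atomicity is preserved by pure submodules, so $M$ inherits the property. In case (2), Lemma \ref{basicfact}(2) yields that $N$ is strict $\cL$-atomic, and Lemma \ref{Lpureofstrict} (applied to the split pure inclusion of $M$) transfers this to $M$.

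It remains in both converses to verify that $M \in \PGen\cB$. I would first show $N \in \PGen\cB$, whence $M \in \PGen\cB$ as a direct summand (a retraction composed with a pure epimorphism is a pure epimorphism). To exhibit $N$ as a pure epimorphic image of a $\cB$-free module, take the separating family $\{B_X \subseteq N\}$ indexed by finite subsets $X$ of $N$, where each $B_X \in \cB$ separates $X$. Since $\cB \subseteq \cL$, Remark \ref{LpureInL} makes each $B_X$ actually pure in $N$ (whether it was $\cL$-pure or locally split to begin with). The canonical map $\bigoplus_X B_X \to N$ is surjective, and its purity reduces to the observation that any pp formula satisfied by a tuple $\br n \in N$ is already satisfied in some $B_X$ containing $\br n$. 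The main subtlety here---and the only real obstacle---is threading this last step cleanly: one must ensure the entire tuple $\br n$ is witnessed inside a single summand of the direct sum, which is guaranteed by selecting $X$ to contain the entries of $\br n$ and invoking purity of $B_X$ in $N$.
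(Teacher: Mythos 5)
Your proof of the implications actually asserted in the theorem coincides with the paper's: the forward directions are the Separation Lemma plus the observation that $M\oplus P^*$ is again strict $\cL$-atomic (closure of strict atomicity under direct sums) followed by Lemma \ref{locsep}, and the converses are Lemma \ref{basicfact} together with Lemma \ref{Lpureofstrict} applied to the split (hence pure, hence $\cL$-pure) inclusion of $M$. Where you depart from the paper is your final paragraph: you read ``(and conversely)'' as also asserting that $M$ lands back in $\PGen\cB$, and you prove this by summing the separating submodules into a $\cB$-free module and checking that the resulting surjection is a pure epimorphism. The paper does not make that claim --- in the remark immediately following the theorem it says the separabilities in question ``only imply inclusion in $\cL$, not necessarily in $\PGen\cB$,'' so the intended converse keeps membership in $\PGen\cB$ as a standing hypothesis and concludes only (strict) $\cL$-atomicity. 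That said, your extra argument appears sound: since $\cB\subseteq\cL\subseteq\langle\cL\rangle$, the $\cL$-pure (resp.\ locally split) separators from $\cB$ are genuinely pure in $N$ by Remark \ref{LpureInL} (resp.\ because locally split submodules are always pure), the canonical map $\bigoplus_X B_X\to N$ is then a pure epimorphism from a $\cB$-free module for exactly the reason you give, and $\PGen\cB$ passes to the direct summand $M$ because a split projection composed after a pure epimorphism is again a pure epimorphism. So this is not a gap but a strengthening that the author explicitly declined to assert; if you keep it, you should present it as sharpening the paper's remark (b) rather than as part of the stated converse, and double-check it against whatever example the author had in mind there.
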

\begin{proof}
The direction from left to right in (1) follows from the Separation Lemma. Namely, if $M$ is $\cL$-atomic, $M\oplus P^*$ is $\cL$-pure separable by $\cB$. For (2), notice that then  $M\oplus P^*$ is even strict $\cL$-atomic, hence even locally $\cB$-separable by Lemma \ref{locsep}.

The converses follow from Lemma \ref{basicfact} (together with  Lemma \ref{Lpureofstrict}).
\end{proof}

See Corollary \ref{classicalThm.1} for the case of the classical separation pair $(\RMod, \Rmod)$.


\begin{cor}\label{f.g.B:1} Suppose $(\cL, \cB)$ is a separation pair with all modules in $\cB$  finitely generated.

 Then every strict $\cL$-atomic module in $\PGen\cB$ is a direct summand of  a (finitely) $\cB$-separable module (and conversely).

\end{cor}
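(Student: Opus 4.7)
My plan is to reduce to Theorem \ref{Thm.1}(2) and then upgrade its conclusion using Lemma \ref{locsep}(2), which was designed for precisely this finitely generated setting. For the forward direction, I would start with a strict $\cL$-atomic module $M$ in $\PGen\cB$ and follow the proof of Theorem \ref{Thm.1}(2): the Separation Lemma gives that $M\oplus P^*$ is $\cL$-pure separable by $\cB$. Since every $B\in\cB$ is strict $\cL$-atomic (by the definition of a separation pair), $P^*$ is strict $\cL$-atomic, hence so is $M\oplus P^*$. The equivalence (i)$\Leftrightarrow$(iii) in Lemma \ref{locsep}(1) then promotes $\cL$-pure $\cB$-separability to local $\cB$-separability. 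At this point the finite-generation hypothesis on $\cB$ triggers Lemma \ref{locsep}(2), pushing local $\cB$-separability all the way up to honest (direct-summand) $\cB$-separability. This exhibits $M$ as a direct summand of the $\cB$-separable module $M\oplus P^*$.

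For the converse, let $N$ be $\cB$-separable with $M$ a direct summand. Direct summands are in particular locally split, so $N$ is locally $\cB$-separable; since every $B\in\cB$ is strict $\cL$-atomic, Lemma \ref{basicfact}(2) yields that $N$ is strict $\cL$-atomic, and Lemma \ref{Lpureofstrict} transfers this to the direct summand $M$. The remaining task is to place $M$ in $\PGen\cB$. I plan to show first that $N\in\PGen\cB$: for each finite $X\subseteq N$ choose a direct summand $B_X\in\cB$ of $N$ containing $X$; the canonical map $\bigoplus_X B_X\to N$ is surjective, and since each $B_X$ is pure in $N$, any pp formula satisfied by a tuple in $N$ can be witnessed inside the summand indexed by (a finite set containing) that tuple, making the map a pure epimorphism. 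Hence $N\in\PGen\cB$, and $M\in\PGen\cB$ as a direct summand.

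The main obstacle, if any, is purely bookkeeping, since the upgrade from Theorem \ref{Thm.1}(2) to the present corollary is precisely the content of Lemma \ref{locsep}(2). The only item needing a touch of care beyond invoking prior results is the verification in the converse that $\cB$-separable modules lie in $\PGen\cB$, which the pure-epimorphism argument just sketched handles directly.
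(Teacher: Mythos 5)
Your forward direction is exactly the paper's: the published proof of Corollary \ref{f.g.B:1} consists of the single instruction ``Employ Lemma \ref{locsep}(2),'' i.e., take the locally $\cB$-separable module $M\oplus P^*$ produced by Theorem \ref{Thm.1}(2) via the Separation Lemma (noting, as you do, that $M\oplus P^*$ is strict $\cL$-atomic) and use finite generation of the members of $\cB$ to turn the locally split separating submodules into direct summands. Where you go beyond the paper is in the converse. The paper only records, via Lemmas \ref{basicfact}(2) and \ref{Lpureofstrict}, that a direct summand of a $\cB$-separable module is strict $\cL$-atomic, and its remark following Theorem \ref{Thm.1} explicitly cautions that the separabilities in question need not place the module back in $\PGen\cB$, so the converse is to be read ``within $\PGen\cB$.'' Your extra step showing that a $\cB$-separable module $N$ does lie in $\PGen\cB$ --- summing the inclusions of the separating direct summands $B_X$ to get a surjection $\bigoplus_X B_X\to N$, and using the purity of each $B_X$ in $N$ to witness any pp formula of a tuple inside the summand indexed by that tuple, so that the map is a pure epimorphism --- is correct, and since $\PGen\cB$ is closed under pure epimorphic images (hence under direct summands), it also puts $M$ itself in $\PGen\cB$. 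So your converse is genuinely sharper than what the paper states in this direct-summand setting: the caveat of the paper's remark is not needed here, at the modest cost of the one extra pure-epimorphism verification you supply.
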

\begin{proof}
Employ Lemma \ref{locsep}(2).
\end{proof}

\begin{rem} \text{}

\begin{enumerate}[\upshape (a)]
 \item One may be tempted to think that the following is true even without $\cB\subseteq\cL$:\\
  $\cL$-atomic modules in $\PGen\cB$ are  direct summands of modules that are (finitely) $\cL$-pure separable by $\cB$.
 However, the entire series of lemmas leading to the theorem is based on the first one, Lemma \ref{Add}, which does need it. 
 \item   The converses of the theorem are true only within $\PGen\cB$, for the separabilities in question only imply inclusion in $\cL$, not necessarily in  $\PGen\cB$.
 \item Remember also, that $\PGen\cB\subseteq\cL$, so the theorems are only about $\cL$-atomic modules belonging to $\cL$ (even $\PGen\cB$, which may be less). 
\end{enumerate}
\end{rem}

\subsection{Finite separation and direct sum decomposition} 
Baer made exactly the following argument---in his special case. 

\begin{lem}\label{countablesplit}{\rm [Baer's Lemma]}
 Let $\cG$ be a class of modules closed under direct summand  and such that  every module in  $\cG$  is (finitely) $\cB$-separable.
\begin{enumerate}
\item  Every module in  $\cG$  is countably locally separable by (countable) direct sums of modules from $\cB$.

\item Countably generated modules from   $\cG$ are direct sums of modules from $\cB$.
\end{enumerate}

\end{lem}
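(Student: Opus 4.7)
The plan is to execute Baer's classical back-and-forth construction. Fix $M\in\cG$ together with a countable subset $\{x_n : n<\omega\}$ of $M$. By $\cB$-separability of $M$, choose $B_0\in\cB$ containing $x_0$ with $M=B_0\oplus M_0$. Since $\cG$ is closed under direct summands, $M_0\in\cG$, hence is itself $\cB$-separable. Decompose $x_1=b+m$ along $M=B_0\oplus M_0$ and pick $B_1\in\cB$ containing $m$ with $M_0=B_1\oplus M_1$; then $x_1\in B_0\oplus B_1$. Iterate: at stage $n$ one has an external decomposition $M=B_0\oplus\cdots\oplus B_{n-1}\oplus M_{n-1}$ with $M_{n-1}\in\cG$ and $x_0,\ldots,x_{n-1}\in B_0\oplus\cdots\oplus B_{n-1}$; decompose $x_n=c+m$ along this and apply $\cB$-separability of $M_{n-1}$ to the $M_{n-1}$-component to obtain $B_n\in\cB$ with $M_{n-1}=B_n\oplus M_n$.

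Set $N:=\bigoplus_{n<\omega} B_n$ internally in $M$; the sum is direct because at every finite stage $k$ the partial sum $B_0\oplus\cdots\oplus B_k$ is an external direct summand of $M$. By construction $\{x_n : n<\omega\}\subseteq N$. The inclusion $N\hookrightarrow M$ is locally split: any tuple $\br c$ in $N$ sits in some finite partial sum $B_0\oplus\cdots\oplus B_k$, and then the projection $M\twoheadrightarrow B_0\oplus\cdots\oplus B_k$ afforded by $M=(B_0\oplus\cdots\oplus B_k)\oplus M_k$, composed with the inclusion into $N$, is a homomorphism $M\to N$ fixing $\br c$. This proves (1).

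For (2), apply the same construction to a fixed countable generating set $\{x_n\}$ of $M\in\cG$. The resulting submodule $N=\bigoplus_n B_n$ of $M$ contains every generator, so $N=M$, exhibiting $M$ as a direct sum of modules from $\cB$. No serious obstacle is expected; the single point requiring care is that the $B_n$ really sum directly inside $M$, which follows at once from the external decompositions at each finite stage. The argument uses nothing beyond closure of $\cG$ under direct summands and finite $\cB$-separability of each of its members.
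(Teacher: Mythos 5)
Your proof is correct and follows essentially the same route as the paper: the same iterative peeling-off of summands $B_n\in\cB$ using closure of $\cG$ under direct summands, with local splitting of $\bigoplus_n B_n$ witnessed by the projections onto the finite partial sums, and part (2) obtained by taking the countable set to be a generating set. The only difference is that you spell out the local retract explicitly where the paper merely asserts that the countable direct sum is locally split.
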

\begin{proof} Let $C$ be a countable subset in $G\in\cG$. 
(1) List the set $C$ as $c_{i}$ ($i < \omega$).  Set $G_{0}=G$ and $b_{0}=c_{0}$.  As $G_{0}$ is $\cB$-separable, $b_{0}$ is contained in a direct summand $B_{0}\in \cB$ of $G_{0}$. Write $G_{0}= B_{0}\oplus G_{1}$ and $c_{1}=a_{1} + b_{1}$ accordingly. Since $G_{1}$ is again in $\cG$, hence $\cB$-separable, $b_{1}$ is contained in a direct summand $B_{1}\in \cB$ of $G_{1}$.  Successively we decompose $G$ into $\oplus_{i<n}B_i\oplus G_{n}$ where each $B_{i}$ is in $\cB$ and so that the sum of the first $n$ summands $B_{i}$ contains the first $n$ elements of $C$. Eventually, $C$ is contained in the direct sum $B_\omega:=\oplus_{n < \omega}B_n$. And while $B_\omega$  may not be a direct summand of $G$, it certainly is a locally split submodule. 

(2) If $C$ happens to generate all of $G$, then inevitably, $G=B_\omega$.
\end{proof}


Applying this to $\cG=\Add(\cB)$, the situation considerably simplifies  when all the generating modules are countably generated, cf.\ \cite[Lemma 7]{GIRT}.

\begin{cor}
 Suppose $\cB$ is a class of countably generated  modules closed under finite direct sum.

 The following assertions are equivalent. 
\begin{enumerate}[(i)]
\item Every module in $\Add(\cB)$ is (finitely) $\cB$-separable.
\item Every module in $\Add(\cB)$ is a direct sum of modules from $\cB$, i.e., $\Add(\cB)=\oplus \cB$.

\end{enumerate}
\end{cor}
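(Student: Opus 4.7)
The two directions will be handled separately. The direction (ii)$\Rightarrow$(i) is immediate: if $M = \bigoplus_{i\in I} B_i$ with each $B_i\in\cB$, then any finite subset (tuple) of $M$ has nonzero projection in only finitely many coordinates, so is contained in a finite partial sum $\bigoplus_{i\in F} B_i$; this is a direct summand of $M$ and it belongs to $\cB$ because $\cB$ is closed under finite direct sum. Hence $M$ is $\cB$-separable.

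For the nontrivial direction (i)$\Rightarrow$(ii), the plan is to reduce to the countably generated case and then invoke Baer's Lemma (Lemma \ref{countablesplit}(2)). Given $M\in\Add(\cB)$, write $M\oplus N = \bigoplus_{i\in I} B_i$ for suitable $B_i\in\cB$. Since each $B_i$ is countably generated by hypothesis, Kaplansky's classical decomposition theorem for direct summands of direct sums of countably generated modules yields a decomposition $M = \bigoplus_{j\in J} M_j$ in which every $M_j$ is countably generated. Each $M_j$ is a direct summand of $M$, hence also a member of $\Add(\cB)$.

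Now I apply hypothesis (i) together with Baer's Lemma. Taking $\cG = \Add(\cB)$ in Lemma \ref{countablesplit}, note that $\cG$ is closed under direct summand and, by (i), every module in $\cG$ is $\cB$-separable; so part (2) of that lemma says every countably generated module in $\cG$ is a direct sum of modules from $\cB$. Applying this to each $M_j$ and combining the resulting decompositions displays $M$ itself as a direct sum of modules from $\cB$, which is (ii).

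The only non-routine ingredient is Kaplansky's theorem, which is the main obstacle in the sense that it is not stated in the excerpt; its invocation is, however, standard, and exactly the hypothesis that $\cB$ consists of countably generated modules is what makes it applicable. Everything else is a direct appeal to the closure properties of $\Add(\cB)$ and to Baer's Lemma already proved above.
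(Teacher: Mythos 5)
Your proof is correct and follows essentially the same route as the paper: the easy direction uses closure of $\cB$ under finite direct sums, and the nontrivial direction combines Kaplansky's theorem (decomposing any member of $\Add(\cB)$ into countably generated summands, which lie in $\Add(\cB)$) with Baer's Lemma applied to $\cG=\Add(\cB)$. The paper's own proof is just a terser version of exactly this argument.
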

\begin{proof} 
If $\cB$ is closed under finite direct sum, all modules in $\oplus \cB$ are $\cB$-separable. This yields the easy direction. For the nontrivial direction, note that by Kaplansky's Theorem---which applies due to the hypothesis made on $\cB$---\emph{every} module in $\Add(\cB)$ decomposes  into a direct sum of countably generated modules in $\Add(\cB)$.
\end{proof}


\subsection{Back to countable self-separation}  Applying Baer's Lemma to the class of all strict $\cL$-atomic modules in $\PGen\cB$ allows one to  considerably strengthen the fundamental countable separation of Fact \ref{ctblesep} as follows. (Note, from Fact \ref{ctblesep} alone we would not know if the separating module from $\cL$ was in $\PGen\cB$, let alone that it was $\cB$-separable.)

\begin{cor}\label{selfsep}
 Suppose $(\cL, \cB)$ is a separation pair and every strict $\cL$-atomic module in $\PGen\cB$ is $\cB$-separable. (This is the case, for instance, if (iii) of \ref{Thm.2} holds and $\cB$ consists of finitely generated modules only.)

\begin{enumerate}
 \item Then all strict $\cL$-atomic modules in $\PGen\cB$  are countably locally separable by countable direct sums of modules in $\cB$.
 \item In particular,  all strict $\cL$-atomic modules in $\PGen\cB$  are countably locally $\cL$-separable.
 \end{enumerate}
\end{cor}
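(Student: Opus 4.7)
The plan is to apply Baer's Lemma \ref{countablesplit} to the class $\cG$ of all strict $\cL$-atomic modules that lie in $\PGen\cB$, and then observe that the countable direct sums of $\cB$-modules produced by that lemma are automatically in $\cL$.

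First I would verify the hypotheses of Baer's Lemma for this $\cG$. The $\cB$-separability assumption is given directly. For closure under direct summands: if $G = G'\oplus G''$ with $G\in\cG$, then $G'$ is a pure submodule of $G$ (in fact an $\cL$-pure submodule), so $G'$ is strict $\cL$-atomic by Lemma \ref{Lpureofstrict}; and since a direct summand is a split (hence pure) epimorphic image of $G\in\PGen\cB$, and $\PGen\cB$ is closed under pure epimorphic images by definition, $G'\in\PGen\cB$ as well. Thus $\cG$ satisfies both requirements of Lemma \ref{countablesplit}.

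Part (1) then follows immediately from Lemma \ref{countablesplit}(1): each countable subset of a module in $\cG$ sits inside a locally split submodule of the form $B_\omega = \bigoplus_{n<\omega} B_n$ with all $B_n\in\cB$. For part (2), I only need to note that such a $B_\omega$ is a member of $\cL$: by the definition of a separation pair, $\cB\subseteq\cL$ and $\cL$ is closed under direct sums, so $B_\omega\in\cL$; hence the locally split submodule witnessing the separation is in $\cL$, which is exactly countable local $\cL$-separability.

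No real obstacle is anticipated; the work is in checking that the class $\cG$ fits the template of Baer's Lemma, after which the conclusions read off directly. The only subtle point worth stressing in the write-up is the explicit appeal to $\cL$-purity (for strict $\cL$-atomicity of summands via Lemma \ref{Lpureofstrict}) and to the closure of $\PGen\cB$ under pure epimorphic images (to keep the summand inside $\PGen\cB$); together these ensure that the successive summands produced in the Baer-style inductive decomposition stay in $\cG$ so that the induction can continue.
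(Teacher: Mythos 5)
Your proof is correct and is exactly the argument the paper intends: the corollary is stated immediately after the remark that it follows by ``applying Baer's Lemma to the class of all strict $\cL$-atomic modules in $\PGen\cB$,'' and your verification of the hypotheses of Lemma \ref{countablesplit} (closure under direct summands via Lemma \ref{Lpureofstrict} and closure of $\PGen\cB$ under pure epimorphic images) together with the observation that $\bigoplus_{n<\omega}B_n\in\cL$ supplies precisely the details the paper leaves implicit.
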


\subsection{The second separation theorem:  when everything is `basic'}\label{basicS} By `basic' we mean that direct summands can be avoided. For instance, we wish to investigate when $\Add\cB=\oplus \cB$ and what the effect of that is on the conclusions of Theorem \ref{Thm.1}. In particular, we are interested to see in which cases those will be `basic' too.
%
%

\begin{thm}\label{Thm.2}   Suppose  $(\cL, \cB)$ is  a separation pair. 

\begin{enumerate}
 \item The following are equivalent. 
 \begin{enumerate} [(i)]
 \item Every module in $\Add\cB$ is $\cL$-pure separable by $\cB$.

 \item Every module in $\Add\cB$ is  locally separable by $\cB$.

 \item Every (countably generated) strict $\cL$-atomic module in $\tC$  is locally separable by $\cB$.
 \end{enumerate}
\item  The next statement implies the above ones. It is equivalent to them, provided $\cL=\tC$ (= $\PGen\cB$) and $\tC$ is self-separating.

(iv)  Every $\tC$-atomic module in $\tC$  is  $\tC$-pure separable by $\cB$.
  \end{enumerate}
\end{thm}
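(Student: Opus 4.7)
For part (1), my first step is to observe that every $N \in \Add\cB$ is strict $\cL$-atomic: $\cB$ consists of strict $\cL$-atomic modules by the definition of a separation pair, strict $\cL$-atomicity passes to direct sums coordinatewise, and Lemma~\ref{Lpureofstrict} shows it is inherited by $\cL$-pure submodules, hence by direct summands. Then Lemma~\ref{locsep}(1), applied to modules in $\Add\cB$, collapses $\cL$-pure, pure, and local $\cB$-separability to a single notion, yielding (i)$\Leftrightarrow$(ii) in one stroke.

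The implication (ii)$\Rightarrow$(iii) is immediate from Lemma~\ref{Add}: a countably generated strict $\cL$-atomic module in $\tC$ already lies in $\Add\cB$, so (ii) applies. For the converse (iii)$\Rightarrow$(ii), given $N \in \Add\cB$ and a finite $C \subseteq N$, the plan is to extract a countably generated direct summand $N_0$ of $N$ with $C \subseteq N_0 \in \Add\cB$. Once this is done, $N_0$ is a countably generated strict $\cL$-atomic member of $\tC$, so (iii) furnishes a locally split $B \in \cB$ of $N_0$ containing $C$; the fact that $N_0$ is a direct summand of $N$ (local retracts onto $N_0$ prolong to retracts into $N$) promotes $B$ to a locally split submodule of $N$. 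The extraction of $N_0$ is the main obstacle: the natural route is a Kaplansky-style decomposition of $N$ into countably generated direct summands from $\Add\cB$ followed by taking the finite partial sum supporting $C$; this is smooth when $\cB$ consists of countably generated modules and needs more care otherwise.

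For part (2), the direction (iv)$\Rightarrow$(i) is short. Any $N \in \Add\cB$ is strict $\cL$-atomic; since $\cL$ is closed under direct sums and $\cL$-pure epimorphic images from $\cB \subseteq \cL$, one has $\tC \subseteq \cL$, so strict $\cL$-atomicity forces $\cL$-atomicity and hence $\tC$-atomicity of $N$. As $N$ also lies in $\tC$, (iv) supplies a $\tC$-pure separation of $N$ by $\cB$; and since $\cB \subseteq \tC$, Remark~\ref{LpureInL}(a) upgrades these $\tC$-pure separating submodules to ordinary pure (hence $\cL$-pure) submodules, verifying (i).

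For the converse, assume (i)--(iii), together with $\cL = \tC$ and self-separation of $\tC$. Given $M$ $\tC$-atomic in $\tC$ and a finite $C \subseteq M$, self-separation produces a countably generated strict $\cL$-atomic $N \in \tC$ with $C \subseteq N \subseteq M$ and $N$ an $\cL$-pure submodule of $M$. Lemma~\ref{Add} then places $N$ in $\Add\cB$, and (i) delivers a submodule $B \in \cB$ that is $\cL$-pure in $N$ and contains $C$. Transitivity of $\cL$-purity along $B \subseteq N \subseteq M$ makes $B$ an $\cL$-pure---equivalently $\tC$-pure---submodule of $M$, yielding (iv).
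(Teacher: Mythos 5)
Your argument parallels the paper's for most of the implications: (i)$\Leftrightarrow$(ii) via strict $\cL$-atomicity of $\Add\cB$ and Lemma~\ref{locsep}, (ii)$\Rightarrow$(iii) for countably generated modules via Lemma~\ref{Add}, and the implication from (i)--(iii) plus self-separation to (iv) via Fact-style countable separation followed by Lemma~\ref{Add} and transitivity of $\cL$-purity --- all of this is essentially what the paper does. Your direct derivation of (i) from (iv) (rather than the paper's chain (iv)$\Rightarrow$(iii)$\Rightarrow$(ii)$\Rightarrow$(i)) is a harmless shortcut and is correct as written.

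The one place where you genuinely diverge, and where your plan has a hole you yourself flag, is (iii)$\Rightarrow$(ii). You read (iii) in its countably generated form and then try to reduce a module of $\Add\cB$ to a countably generated direct summand in $\Add\cB$ by a Kaplansky-style decomposition. A separation pair does \emph{not} require the members of $\cB$ to be countably generated, so Kaplansky's theorem is not available and this extraction can fail; the paper reserves that machinery for Theorem~\ref{c.g.B:2}, where countable generation of $\cB$ is an explicit hypothesis. The paper instead takes (iii) in its unrestricted reading, under which (ii) is \emph{literally} a special case: every module in $\Add\cB$ is a strict $\cL$-atomic (hence strict $\tC$-atomic) member of $\tC=\PGen\cB$, so (iii) applied to it is exactly (ii). The cycle is then closed by your own (correct) observation that (ii) yields the countably generated part of (iii) via Lemma~\ref{Add}. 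So you should drop the decomposition step entirely and use the full form of (iii) for this direction; with that change your proof is complete and matches the intended argument.
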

\begin{proof}
  (iv) $\Rightarrow$ (iii) follows from Lemma \ref{locsep}, while  (ii) is a special case of (iii), for every module in $\Add\cB$ is strict $\tC$-atomic and    $\Add\cB\subseteq\PGen\cB$.
    And  (i) and (ii) are equivalent by Lemmas \ref{basicfact}  and \ref{locsep}. We are left with  (ii) $\Rightarrow$ (iv), the only implication where we seem to have to invoke the extra hypotheses of $\cL$ being   self-separating and of it being equal to $\tC$.
    
  For this, first note that (ii) implies, together with Lemma \ref{Add}, the countably generated part of (iii). Now consider a tuple $\br m$ in a $\tC$-atomic module $M\in\tC$. By hypothesis, $\br m$ is contained in a countably generated $\tC$-pure submodule $N\in\tC$ of  $M$ that is strict $\tC$-atomic, hence, by what has just been said, locally $\cB$-separable. Consequently, $\br m$ is contained in a $\tC$-pure submodule  $B\in\cB$ of $M$, as desired.
%
%
%
%
%
%
%
\end{proof}

\begin{rem}
%
 Finite self-separability of $\cL$ suffices in (iv).
\end{rem}

Corollary \ref{pureresolvself} now yields

\begin{cor}
 Statements (i) through (iv) of the theorem are equivalent  for any purely resolving class $\cL=\tC$.
(But note Theorem \ref{LMLisML}.)
 \qed
\end{cor}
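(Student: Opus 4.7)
The strategy is to invoke Theorem \ref{Thm.2}(2) directly: once $\cL=\tC$ is known to be self-separating, the implication (ii)$\Rightarrow$(iv)—the only one requiring these extra hypotheses—goes through, and all four statements collapse to a single equivalence class.

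So I would proceed in two short steps. First, I would verify the setup. Since $\cL$ is purely resolving, by Definition \ref{pgen}(3) there is a class $\cB$ of pure-projective modules with $\cL=\PGen\cB=\tC$. Without loss of generality we may take $\cB$ closed under finite direct sum. Every pure-projective module is a direct summand of a direct sum of finitely presented modules, which is separable and therefore strict atomic, so by Lemma \ref{Lpureofstrict} every member of $\cB$ is strict atomic and hence \emph{a fortiori} strict $\cL$-atomic. This is exactly what is needed for $(\cL,\cB)$ to qualify as a separation pair in the sense of the first definition of \S\ref{SepPGen}.

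Second, I would apply Corollary \ref{pureresolvself}, which tells us that every purely resolving class is (countably) self-separating. Thus the two extra hypotheses in part (2) of Theorem \ref{Thm.2}—namely, $\cL=\tC$ and $\tC$ self-separating—are both in force, so statement (iv) is equivalent to (i), (ii), (iii). This gives the full equivalence (i)$\Leftrightarrow$(ii)$\Leftrightarrow$(iii)$\Leftrightarrow$(iv) and completes the argument.

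There is no substantial obstacle here; the content lies entirely in the earlier Corollary \ref{pureresolvself} (itself a consequence of Lemma \ref{pureresolvF} that purely resolving classes are special F-classes). The only small thing to watch is to verify that the chosen $\cB$ witnessing pure resolution gives a legitimate separation pair, i.e., that its members are strict $\cL$-atomic—which, as noted above, is immediate because pure-projectives are strict atomic.
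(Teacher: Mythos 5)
Your proof is correct and follows the paper's own route: the corollary is obtained exactly by combining Corollary \ref{pureresolvself} (purely resolving classes are self-separating) with Theorem \ref{Thm.2}(2), whose extra hypotheses $\cL=\tC$ and self-separation are thereby satisfied. The preliminary check that the witnessing class $\cB$ of pure-projectives yields a separation pair (its members being strict atomic, hence strict $\cL$-atomic) is a detail the paper leaves implicit, but it is the same argument.
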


When all members of $\cB$ are countably generated, Kaplansky's Theorem allows for a slightly different and stronger version of the previous theorem.

\begin{thm}\label{c.g.B:2} Suppose  $(\cL, \cB)$ is  a separation pair with all modules in $\cB$ countably generated.

\begin{enumerate}
 \item The following are equivalent. 
\begin{enumerate} [(i)]
 \item Every module in $\Add\cB$ is separable by $\cB$.
 \item Every module in $\Add\cB$ is countably locally separable by $\oplus\cB$.
 
 \item $\Add\cB = \oplus\cB$.
 
 \item Every countably generated (strict) $\cL$-atomic module in $\PGen\cB$ is in $\oplus\cB$.

 \item Every strict $\cL$-atomic module in $\PGen\cB$ is countably locally separable by $\oplus\cB$.
\end{enumerate} 
\item The following statement implies the previous ones. It is equivalent to them, provided $\cL=\tC$ (= $\PGen\cB$) and $\tC$ is self-separating.

(vi) Every $\tC$-atomic module in $\tC$ is countably  $\tC$-pure separable by $\oplus\cB$.

 \end{enumerate}
\end{thm}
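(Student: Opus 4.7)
The plan is to establish the equivalences (i)--(v) of Part (1) with (iii) as a hub, and then derive Part (2) by linking (vi) to (v) via Lemma \ref{locsplitsub}.

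For the $\Add\cB$-linked equivalences, (i) $\Leftrightarrow$ (iii) is the Corollary immediately after Baer's Lemma \ref{countablesplit}, invoking the standing hypothesis that $\cB$ consists of countably generated modules closed under finite direct sum. The direction (iii) $\Rightarrow$ (ii) is immediate from Baer's Lemma applied to $\oplus\cB$, while (ii) $\Rightarrow$ (iii) follows because (ii) forces a countably generated $M \in \Add\cB$ to coincide with its locally split $\oplus\cB$-separator (which must contain all its generators), so $M \in \oplus\cB$; Kaplansky's Theorem then decomposes any module in $\Add\cB$ into such countably generated summands. For (iii) $\Leftrightarrow$ (iv), Lemma \ref{Add} sends any countably generated strict $\cL$-atomic module in $\PGen\cB$ into $\Add\cB = \oplus\cB$; conversely, a countably generated module in $\Add\cB$ is strict $\cL$-atomic (as a pure submodule of a $\cB$-free module, by Lemma \ref{Lpureofstrict}) and lies in $\PGen\cB$, so (iv) yields it in $\oplus\cB$, and Kaplansky concludes.

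The central step is (iv) $\Rightarrow$ (v). Given $M$ strict $\cL$-atomic in $\PGen\cB$ and a countable $C \subseteq M$, I would construct a countably generated $N \subseteq M$ containing $C$ that is strict $\cL$-atomic, $\cL$-pure in $M$, and belongs to $\PGen\cB$; then (iv) places $N$ in $\oplus\cB \subseteq \cL$, and Lemma \ref{locsplitsub} makes $N$ locally split in $M$. To build $N$, fix a pure epimorphism $f: P \twoheadrightarrow M$ with $P = \bigoplus_{i \in I} B_i$, $B_i \in \cB$, and start with a countable $J_0 \subseteq I$ with $f(\bigoplus_{i \in J_0} B_i) \supseteq C$. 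Iteratively enlarge: at stage $n$, for every tuple $\bar g$ from a countable generating set of the current $M_n = f(\bigoplus_{i \in J_n} B_i)$, add to the working preimage a preimage of $\bar g$ realizing the canonical $\cL$-generator $\phi_{\bar g}$ of $\pp_M(\bar g)$ (available by strict $\cL$-atomicity of $M$) along with existential witnesses for $\phi_{\bar g}$. Each tuple contributes only finitely many new $B_i$-indices and there are countably many tuples per stage, so $J := \bigcup_n J_n$ stays countable and $N := f(\bigoplus_{i \in J} B_i)$ is countably generated. The $\cL$-purity of $N$ at each generator $\bar g$ is witnessed by $\phi_{\bar g}$ itself, since $\phi_{\bar g} \leq_\cL \phi$ for every $\phi \in \pp_M(\bar g)$; purity of the induced epimorphism $\bigoplus_{i \in J} B_i \twoheadrightarrow N$ at $\bar g$ follows because $\bigoplus_{i \in J} B_i \in \oplus\cB \subseteq \cL$, so the $\cL$-implication $\phi_{\bar g} \leq_\cL \phi$ lifts to $\phi_{\bar g}(\bigoplus_{i \in J} B_i) \subseteq \phi(\bigoplus_{i \in J} B_i)$, whence $N \in \PGen\cB$. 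Both properties extend from generators to arbitrary tuples by precomposing pp formulas with the linear expression of a tuple in terms of generators. The reverse (v) $\Rightarrow$ (iv) is immediate: applied to a countable generating set of a countably generated $M \in \PGen\cB$, the separating $\oplus\cB$-submodule must equal $M$.

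For Part (2), (vi) $\Rightarrow$ (v) proceeds because a strict $\cL$-atomic $M \in \PGen\cB = \tC$ is in particular $\tC$-atomic in $\tC$ (as $\tC \subseteq \cL$ makes $\cL$-implication refine $\tC$-implication), so (vi) provides countable $\tC$-pure $\oplus\cB$-separation; $\oplus\cB \subseteq \tC$ upgrades $\tC$-purity to honest purity via Remark \ref{LpureInL}, and Lemma \ref{locsplitsub} then gives local splitting. For the converse under $\cL = \tC$ with $\tC$ self-separating, self-separation supplies, for any $\tC$-atomic $M \in \tC$ and countable $C \subseteq M$, a countably generated strict $\cL$-atomic $N \in \tC = \PGen\cB$ with $C \subseteq N$ and $N$ $\cL$-pure in $M$; (iv) places $N$ in $\oplus\cB$, and $\cL = \tC$ identifies $\cL$-purity with $\tC$-purity, yielding (vi). The main obstacle is the iterative construction in (iv) $\Rightarrow$ (v): it must simultaneously produce countable generation, $\cL$-purity in $M$, and membership of $N$ in $\PGen\cB$ through a genuine pure epimorphism from a countably indexed $\oplus\cB$-module. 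Strict $\cL$-atomicity of $M$ is what makes this possible, reducing the a priori infinite pp-type at each tuple to the single canonical $\phi_{\bar g}$ and channelling the purity information through $\oplus\cB \subseteq \cL$, thereby avoiding any circular appeal to $N \in \cL$.
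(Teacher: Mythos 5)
Your proposal is correct, and the equivalences tied to $\Add\cB$ ((i)$\Leftrightarrow$(ii)$\Leftrightarrow$(iii)$\Leftrightarrow$(iv) via Baer's Lemma, its Corollary, Lemma \ref{Add} and Kaplansky) as well as (v)$\Rightarrow$(iv), (vi)$\Rightarrow$(v) and the converse under the proviso all track the paper's argument. Where you genuinely diverge is (iv)$\Rightarrow$(v): the paper routes this through (vi), citing Fact \ref{ctblesep} for (iv)$\Rightarrow$(vi) and Lemma \ref{locsep} for (vi)$\Rightarrow$(v), whereas you give a direct L\"owenheim--Skolem-style closing-off construction along a fixed pure epimorphism $\oplus_I B_i\twoheadrightarrow M$, producing a countably generated, $\cL$-pure, strict $\cL$-atomic separator $N$ that moreover lies in $\PGen\cB$ (because the restricted map $\oplus_J B_i\to N$ is again a pure epimorphism), so that (iv) and Lemma \ref{locsplitsub} apply to it. This is more work, but it buys something real: Fact \ref{ctblesep} by itself does not guarantee that the separating submodule belongs to $\PGen\cB$ (the paper itself flags exactly this deficiency just before Corollary \ref{selfsep}), so the paper's (iv)$\Rightarrow$(vi) leg is most comfortably read as using the self-separation proviso, while your construction makes (iv)$\Rightarrow$(v) unconditional and self-contained. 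The one point to write out carefully if you expand the sketch is the bookkeeping that every finite tuple of generators of $N$ is already handled at some finite stage, and the standard reduction from generator tuples to arbitrary tuples by precomposition with the matrix expressing the tuple in the generators; both are routine and you indicate them correctly.
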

\begin{proof}
 (i) $\Rightarrow$ (ii) is Baer's Lemma \ref{countablesplit}. 
 
 (ii) $\Rightarrow$ (iii): first note that (ii) implies that \emph{countably generated} modules in $\Add\cB$ are in $\oplus\cB$. To see the same for \emph{all} modules in $\Add\cB$, 
 apply Kaplansky's Theorem, which shows that every module in $\Add\cB$ is a direct sum of countably generated modules from $\Add\cB$ (here the countable generation of all members of $\cB$ is used).
 
  (iii) $\Rightarrow$ (iv) follows from Lemma \ref{Add},   (iv) $\Rightarrow$ (vi) from the basic Fact \ref{ctblesep}, and  (vi) $\Rightarrow$ (v) from Lemma \ref{locsep}, while (v) $\Rightarrow$ (iv) is trivial.
  
  Finally, (iv) $\Rightarrow$ (iii) is by Kaplansky's Theorem again.
\end{proof}

\begin{rem}
%
 \noindent
 (iv) above implies (iv)': every countably generated (strict) $\cL$-atomic module in $\PGen\cB$ is separable by $\cB$. 
  
 \noindent
 (v) above implies (v)': every strict $\cL$-atomic module in $\PGen\cB$ is locally separable by $\cB$. 
 
 \noindent
 (vi) above implies (vi)': every $\cL$-atomic module in $\PGen\cB$ is $\cL$-pure separable by $\cB$.
 
 Note, (v)' and (vi)' are  equivalent.
\end{rem}

\begin{cor}\label{f.g.B:2} If all modules in $\cB$ are finitely generated,  one may replace (in Theorem \ref{c.g.B:2}),

in (v): `countably locally separable by $\oplus\cB$' by `(finitely) separable by $\cB$,'

in (vi): `countably  $\tC$-pure separable by $\oplus\cB$' by `(finitely) $\tC$-pure separable by $\cB$'.

%
%
%
%
\end{cor}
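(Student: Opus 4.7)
The plan is to verify that under the hypothesis that every module in $\cB$ is finitely generated, the two modified clauses remain equivalent to (i)–(v) (respectively (vi)), inside the statement of Theorem \ref{c.g.B:2}. The single technical observation driving everything is this: if $N=\bigoplus_{i\in I}B_i\in\oplus\cB$ and $F\subseteq N$ is finite, then $F$ lies in a finite subsum $B:=\bigoplus_{i\in I_0}B_i$, which is a direct summand of $N$ and, thanks to closure of $\cB$ under finite direct sum, is itself a member of $\cB$. Being finitely generated and in $\cL$, this $B$ is then an ideal target for Lemma \ref{locsplitsub}.

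For the direction (v)$\Rightarrow$(v'): take $M$ strict $\cL$-atomic in $\PGen\cB$ and a finite $F\subseteq M$. By the unmodified (v), a countable set containing $F$ sits inside a locally split submodule $N\in\oplus\cB$ of $M$. Carve out $B\in\cB$ as above. A direct summand of a locally split submodule is locally split (compose the local retract with the projection), so $B$ is locally split, and in particular pure, in $M$. Now Lemma \ref{locsplitsub} applies: since $M$ is strict $\cL$-atomic and $B$ is a finitely generated member of $\cL$ pure in $M$, the inclusion $B\hookrightarrow M$ splits. Hence $M$ is finitely $\cB$-separable, which is (v'). The converse (v')$\Rightarrow$(v) is immediate from Baer's Lemma \ref{countablesplit}(1) applied to the class of strict $\cL$-atomic modules in $\PGen\cB$, which is closed under direct summand.

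For the direction (vi)$\Rightarrow$(vi'): take $M$ $\tC$-atomic in $\tC$ and a finite $F\subseteq M$. The unmodified (vi) places $F$ in a $\tC$-pure submodule $N\in\oplus\cB$. Again pick $B\in\cB$ containing $F$ as a direct summand of $N$. Then $B$ is pure in $N$, and a straightforward pp-calculation shows that pure submodule of a $\tC$-pure submodule is itself $\tC$-pure—given $\phi\in\pp_M(\bar b)$ with $\bar b$ in $B$, $\tC$-purity of $N$ in $M$ delivers some $\psi\leq_{\tC}\phi$ in $\pp_N(\bar b)$, and since $B$ is pure in $N$ this $\psi$ lies in $\pp_B(\bar b)$. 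So $B$ is $\tC$-pure in $M$, giving (vi').

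The converse (vi')$\Rightarrow$(vi) needs to be chained through the other equivalences (this is where the extra hypotheses $\cL=\tC$ and self-separation of $\tC$, already assumed for (vi), are invoked): under them the theorem yields (v)$\Leftrightarrow$(vi), and a nearly identical argument to (v)$\Rightarrow$(v')—applying Lemma \ref{locsplitsub} to the $\tC$-pure $B\in\cB$ inside the strict $\cL$-atomic $M$—shows (vi')$\Rightarrow$(v'), whence (vi')$\Rightarrow$(v)$\Rightarrow$(vi). I do not anticipate a real obstacle here; the only subtlety is remembering that splitting off a direct summand requires genuine finite generation of $B$, which is precisely the added hypothesis of the corollary.
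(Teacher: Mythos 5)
Your proof is correct and follows essentially the same route as the paper's: the paper identifies the modified (v) and (vi) with the statements (v)$'$ and (vi)$'$ of the preceding Remark, converts ``locally separable by $\cB$'' into ``separable by $\cB$'' via Lemma \ref{locsep}(2) (the same splitting of finitely generated pure members of $\cL$ that you extract from Lemma \ref{locsplitsub}), and closes the circle by noting that (i) is a special case of the new (v)---where you instead reapply Baer's Lemma \ref{countablesplit} directly, which is precisely the ingredient the theorem already uses for (i)$\Rightarrow$(ii). Your finite-subsum extraction and the purity-composition argument are exactly what the Remark's implications (v)$\Rightarrow$(v)$'$ and (vi)$\Rightarrow$(vi)$'$ rest on, so nothing is genuinely different.
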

\begin{proof} Only (v) and (vi) differ from the corresponding statements in the theorem. In fact, they are (v)' and (vi)' from the remark above, which are equivalent to each other and definitely follow from (v) and (vi) in the theorem. But now that $\cB$ consists entirely of finitely generated modules, (v)' becomes (v) here. Finally, (i) is a special case of (v), so the circle is completed.  
\end{proof}

\subsection{The classical case} Here we draw the conclusions of the above theorems for the classical case when $\cK=\ModR=\langle \modR\rangle$ and $\cL=\RMod=\langle \Rmod\rangle$. This corresponds to the separation pair $(\RMod, \Rmod)$, or the purely resolving class $\RMod$. (Remember, (strict) Mittag-Leffler is the same as (strict) atomic.)

Recall our convention: `separable' means `separable by $\Rmod$'  here (i.e., separable by finitely presented modules).

\begin{cor}\label{classicalThm.1}\text{}

 \begin{enumerate} 
\item Every pure-projective $R$-module is a direct summand of a direct sum of finitely presented modules.  {\rm (This  well-known fact is included only to draw the analogy in the next corollaries.)}

 \item \cite[Lemma 3.6(ii)]{HT} Every strict Mittag-Leffler module  is  a direct summand of a separable module.
 
\item Every Mittag-Leffler module  is  a direct summand of a pure separable module.
 \end{enumerate}
\end{cor}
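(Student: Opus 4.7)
The plan is to verify that $(\RMod, \Rmod)$ is a separation pair in the sense of the previous subsection, and then read off the three statements as direct specializations of Theorem \ref{Thm.1} and Corollary \ref{f.g.B:1}.

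First I would check the axioms of a separation pair for $\cL = \RMod$ and $\cB = \Rmod$: the inclusion $\Rmod\subseteq\RMod$ is trivial; $\RMod$ is obviously closed under direct sum and $\RMod$-pure (i.e.\ plain pure) epimorphic images; finitely presented modules form a small class (up to isomorphism a set), are closed under finite direct sum, and each of them is strict $\RMod$-atomic (= strict Mittag-Leffler)---indeed, if $\bar{m}$ is a generating tuple of some $F\in\Rmod$ and $\phi$ generates $\pp_F(\bar{m})$, then $(F,\bar{m})$ freely realizes $\phi$ for all of $\RMod$. Furthermore, by Warfield's theorem (\cite{W}, 33.5, invoked just after Proposition \ref{from1ton}), every module is a pure epimorphic image of a direct sum of finitely presented modules, so $\PGen\,\Rmod = \RMod$; in particular every Mittag-Leffler or strict Mittag-Leffler module automatically lies in $\PGen\,\Rmod$, and the hypotheses of Theorem \ref{Thm.1} are met for every such module.

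Next I would translate terminology: with $\cL=\RMod$, $\cL$-atomicity is $\RMod$-atomicity, which by the Main Theorem recalled in the preliminaries is the same as (plain) Mittag-Leffler; strict $\cL$-atomic is strict Mittag-Leffler; and $\cL$-purity is ordinary purity. With these identifications, part (3) is literally Theorem \ref{Thm.1}(1) applied to $(\RMod,\Rmod)$: every Mittag-Leffler module is a direct summand of a module that is (finitely) pure separable by $\Rmod$, which is exactly \emph{pure separable}. Part (2) follows from Theorem \ref{Thm.1}(2), which gives a direct summand decomposition of a locally separable module; since $\Rmod$ consists of finitely generated modules, Lemma \ref{locsep}(2)---or equivalently the already-specialized Corollary \ref{f.g.B:1}---upgrades this to (finite) separability by $\Rmod$ in the classical sense.

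Finally, part (1) requires no new argument: it is just the standard characterization of pure-projective modules as direct summands of direct sums of finitely presented modules, and the author explicitly includes it only to draw the parallel with (2) and (3). There is no substantial obstacle in this proof---the separation machinery of \S\ref{SepPGen} was precisely designed to deliver Corollary \ref{classicalThm.1} as an immediate specialization; the only point requiring any care is the verification that $(\RMod,\Rmod)$ meets the (rather light) definition of a separation pair, and in particular that every finitely presented module is strict Mittag-Leffler so as to satisfy the strict $\cL$-atomicity requirement on members of $\cB$.
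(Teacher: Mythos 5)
Your proposal is correct and follows essentially the same route as the paper, which likewise just specializes Theorem \ref{Thm.1}(1) and Corollary \ref{f.g.B:1} to the separation pair $(\RMod,\Rmod)$ after identifying (strict) atomic with (strict) Mittag-Leffler. The only cosmetic point: your check that finitely presented modules are strict $\RMod$-atomic treats generating tuples only, but this extends to arbitrary tuples by the standard observation that a matrix image $H\bar m$ of a free realization $(F,\bar m)$ of $\phi(\bar x)$ freely realizes $\exists\bar x\,(\phi(\bar x)\wedge\bar y=H\bar x)$, so nothing is genuinely missing.
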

\begin{proof}
Apply Corollary \ref{f.g.B:1} and Theorem \ref{Thm.1}(1), resp., with $\cB=\Rmod$, $\cL=\RMod$. Recall, strict atomic = strict Mittag-Leffler and atomic = Mittag-Leffler.
\end{proof}

The first two statements in the next corollary were proved to be equivalent in \cite[Thm.\,4.2]{GIRT}. 
It says that if `direct summand' can be omitted in any of the three statements of Corollary \ref{classicalThm.1}, it can be omitted in the other two.

\begin{cor}\label{basicGIRT} The following are equivalent.

 \begin{enumerate} [(i)]
 \item Every pure-projective $R$-module is a direct sum of finitely presented modules.
 
 
 \item Every strict Mittag-Leffler module  \emph{is} separable.
 
\item Every Mittag-Leffler module  \emph{is} pure separable.
 \end{enumerate}
\end{cor}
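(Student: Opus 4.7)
The plan is to recognize this corollary as the special ``classical'' instance of Theorem \ref{c.g.B:2} combined with Corollary \ref{f.g.B:2}, applied to the separation pair $(\RMod, \Rmod)$. So first I would verify we have a separation pair here: $\Rmod\subseteq\RMod$, both are closed under finite direct sum, each finitely presented module is trivially strict $\RMod$-atomic (in fact strict Mittag-Leffler), and $\Rmod$ is small in the required sense.

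Next I would identify the ambient class: with $\cB=\Rmod$, one has $\cC=\Add\Rmod=$ the pure-projectives, and $\tC=\PGen\Rmod=\RMod$ by Warfield's theorem (mentioned just after Proposition \ref{from1ton}, which gives that every module is a pure-epimorphic image of a direct sum of finitely presented modules). In particular, $\cL=\tC=\RMod$, so ``$\tC$-pure'' reduces to ordinary purity, and ``$\cL$-atomic'' (respectively, ``strict $\cL$-atomic'') reduces to Mittag-Leffler (respectively, strict Mittag-Leffler) by \cite[Prop.\,2.1]{GIRT} and the main theorem of \cite{MLII}.

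Now the central hypothesis for the stronger half of Theorem \ref{c.g.B:2} is self-separation of $\tC$. Since $\tC=\RMod$ is purely generated by the class $\Rmod$ of finitely presented (hence pure-projective) modules, it is a purely resolving class in the sense of Definition \ref{pgen}(3), so Corollary \ref{pureresolvself} supplies the needed self-separation.

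With this set-up I would match the three statements of the corollary to items of Theorem \ref{c.g.B:2}:  statement (i) here is precisely $\Add\cB=\oplus\cB$, i.e.\ clause (iii) of that theorem; statement (ii) here says every strict Mittag-Leffler module is separable, which—since all modules in $\cB=\Rmod$ are finitely generated—is the simplified form of clause (v) supplied by Corollary \ref{f.g.B:2}; statement (iii) here says every Mittag-Leffler module is pure separable (by $\Rmod$), which is the simplified form of clause (vi) supplied by Corollary \ref{f.g.B:2}, noting once more that $\tC$-purity and purity coincide because $\tC=\RMod$. The equivalence of (i)–(v) of Theorem \ref{c.g.B:2} holds unconditionally (once $\cB$ is closed under finite direct sum and has countably generated members, which it does here), and the equivalence with (vi) kicks in by virtue of the self-separation established above. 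Hence the three statements of the corollary are pairwise equivalent.

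The only part requiring any thought, and therefore what I would flag as the ``main obstacle,'' is the passage from (ii) to (iii): this is precisely where one needs self-separation of $\tC=\RMod$ in Theorem \ref{c.g.B:2}(2), and is the reason we must first verify that $\RMod$ is purely resolving before invoking Corollary \ref{pureresolvself}. Everything else is bookkeeping—matching the simplified clauses of Corollary \ref{f.g.B:2} with the wording of the present statement.
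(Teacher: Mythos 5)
Your proposal is correct and follows essentially the same route as the paper, whose entire proof is ``Take $\cB=\Rmod$ and $\cL=\RMod$ in Corollary \ref{f.g.B:2}''; you have merely spelled out the details the paper leaves implicit (that $(\RMod,\Rmod)$ is a separation pair, that $\PGen\Rmod=\RMod$ by Warfield, and that self-separation comes from $\RMod$ being purely resolving via Corollary \ref{pureresolvself}). The matching of (i), (ii), (iii) to clauses (iii), (v), (vi) of Theorem \ref{c.g.B:2} in its finitely generated form is exactly as intended.
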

\begin{proof}
 Take $\cB=\Rmod$ and $\cL=\RMod$ in Corollary \ref{f.g.B:2}.
\end{proof}

This is the case over $R=\mathbb Z$ (that is, for abelian groups), over Krull-Schmidt rings, over rings that are one-sided noetherian and two-sided serial, as well as over (two-sided) hereditary noetherian rings, cf.\ \cite{GIRT} or \cite{PR} for references.

%


\subsection{The flat case}\label{flatcase} 

Corollary \ref{f.g.B:1} now reads: Every flat strict $\flat$-atomic module is a direct summand of a $\cB$-separable module (and conversely), where we can take $\cB=\Rproj$, the finitely generated projectives, or $\cB=\Rfree$, the free modules (of finite rank). However,
 we know from Corollary \ref{flatISstrict} already that flat strict $\flat$-atomic are even strict Mittag-Leffler, and similarly, flat $\flat$-atomic (= $\sharp_R$-Mittag-Leffler) modules are Mittag-Leffler, see \cite[Thm.\,3.10]{Tf}.

\begin{cor}\text{}
\begin{enumerate}
 \item Flat strict Mittag-Leffler modules are direct summands of $\Rfree$-separable modules.
 \item Flat  Mittag-Leffler modules are direct summands of modules that are $\flat$-pure $\Rfree$-separable.
\end{enumerate}
\end{cor}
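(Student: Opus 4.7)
The plan is to apply Theorem \ref{Thm.1}(1) and Corollary \ref{f.g.B:1} to the separation pair $({}_R\flat,\Rfree)$, where ${}_R\flat$ is the class of flat left $R$-modules and $\Rfree$ the class of free left $R$-modules of finite rank.

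First I verify that $({}_R\flat,\Rfree)$ is a separation pair in the sense of the definition in \S6. The inclusion $\Rfree\subseteq{}_R\flat$ is clear, and ${}_R\flat$ is closed under arbitrary direct sums and under pure epimorphic images---in particular under ${}_R\flat$-pure epimorphic images. The class $\Rfree$ is small and closed under finite direct sum, and its members, being finitely presented and pure-projective, are strict Mittag-Leffler, hence strict ${}_R\flat$-atomic (any $\RMod$-free realization is a fortiori an ${}_R\flat$-free realization). Finally, $\PGen\Rfree={}_R\flat$: every flat module is a pure epimorphic image of a free module, which is a direct sum of copies of $R\in\Rfree$; the reverse inclusion holds because ${}_R\flat$ is closed under direct sums and pure epimorphic images.

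For (1), let $M$ be a flat strict Mittag-Leffler module. By definition $M$ is strict $\RMod$-atomic, and by the observation just made $M$ is a fortiori strict ${}_R\flat$-atomic. Since $M\in{}_R\flat=\PGen\Rfree$ and all modules in $\Rfree$ are finitely generated, Corollary \ref{f.g.B:1} produces $M$ as a direct summand of an (finitely) $\Rfree$-separable module. For (2), let $M$ be a flat Mittag-Leffler module. Then $M$ is $\RMod$-atomic, hence also ${}_R\flat$-atomic (a pp formula that $\RMod$-generates a pp type also ${}_R\flat$-generates it), and $M\in\PGen\Rfree$. Theorem \ref{Thm.1}(1) then delivers $M$ as a direct summand of a module that is ${}_R\flat$-pure separable by $\Rfree$.

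The argument is a direct invocation of the general separation machinery specialized to the explicit pair $({}_R\flat,\Rfree)$. The only step calling for any care is the verification of the separation-pair properties---in particular the identification $\PGen\Rfree={}_R\flat$ and the transfer of (strict) $\RMod$-atomicity to (strict) ${}_R\flat$-atomicity---so no real obstacle is expected.
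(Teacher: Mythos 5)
Your argument is correct and is essentially the paper's own: the corollary is obtained by specializing Theorem \ref{Thm.1}(1) and Corollary \ref{f.g.B:1} to the separation pair $({}_R\flat,\Rfree)$, using that $\PGen\Rfree={}_R\flat$ and that (strict) Mittag-Leffler, i.e.\ (strict) $\RMod$-atomic, implies (strict) ${}_R\flat$-atomic. The paper additionally records (via Corollary \ref{flatISstrict} and \cite[Thm.\,3.10]{Tf}) that for flat modules the converse implications hold as well, but that is not needed for the statement you proved.
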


 When we apply Theorem \ref{Thm.2} in the shape of Corollary \ref{f.g.B:2} to this situation, we get two different results, one for $\cB=\Rfree$, one for $\cB=\Rproj$.
Only the last condition in either one is new though, the other two had been achieved already in \cite[Thms.\,5.1 and 5.3]{GIRT}.

\begin{cor}\label{proj=free} The following are equivalent.

 \begin{enumerate} [(i)]
 \item Every projective $R$-module is free.
 
 
 \item Every flat strict Mittag-Leffler module is  $\Rfree$-separable.
 
\item Every  flat Mittag-Leffler module  is  $\flat$-pure $\Rfree$-separable.
 \end{enumerate}

\end{cor}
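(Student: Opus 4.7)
The plan is to apply Corollary \ref{f.g.B:2} to the separation pair $(\cL,\cB)=({_R\flat},\Rfree)$, with $\Rfree$ denoting the class of finite-rank free modules. The first step is to verify the hypotheses: every finite-rank free module is flat, so $\cB\subseteq\cL$; the class ${_R\flat}$ is closed under arbitrary direct sums and pure epimorphic images, and $\cL$-pure epimorphisms emanating from modules in $\cL$ reduce to pure epimorphisms by Remark \ref{LpureInL}; the finite-rank free modules are closed under finite direct sum and are strict Mittag-Leffler, hence strict $\cL$-atomic. Lazard's theorem gives $\PGen\Rfree={_R\flat}$, so $\tC=\cL$; by Definition \ref{pgen}(3) this $\cL$ is purely resolving, and hence self-separating by Corollary \ref{pureresolvself}.

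With the hypotheses in place, conditions (iii), (v), and (vi) of Corollary \ref{f.g.B:2} correspond one-to-one to (i), (ii), and (iii) of the statement to be proved. Indeed, $\Add\Rfree$ is exactly the class of projective modules and $\oplus\Rfree$ is the class of all free modules, so `$\Add\cB=\oplus\cB$' says `every projective is free.' Condition (v) of Corollary \ref{f.g.B:2} reads: every strict $\cL$-atomic module in $\PGen\cB$ is $\Rfree$-separable; by Corollary \ref{flatISstrict}, `flat strict ${_R\flat}$-atomic' is the same as `flat strict Mittag-Leffler,' recovering (ii). Condition (vi) reads: every $\tC$-atomic module in $\tC$ is $\tC$-pure $\Rfree$-separable; by Theorem \ref{LMLisML} (equivalently, \cite[Thm.\,3.10]{Tf}), `flat ${_R\flat}$-atomic' coincides with `flat Mittag-Leffler,' yielding (iii).

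The genuine content of the proof lies in the implication (v)$\Rightarrow$(vi) of Corollary \ref{f.g.B:2}---equivalently, (ii)$\Rightarrow$(iii) of the statement---for this is the one step where the hypothesis $\cL=\tC$ together with self-separability of $\tC$ is actually used. All other implications of Corollary \ref{f.g.B:2} are formal, and the needed self-separability is supplied by Corollary \ref{pureresolvself} applied to the purely resolving class ${_R\flat}$. The remainder of the argument is bookkeeping: verifying the separation-pair axioms and matching the adjectives `${_R\flat}$-atomic' with `Mittag-Leffler' (in the flat case) on the non-strict side and `strict ${_R\flat}$-atomic' with `strict Mittag-Leffler' on the strict side.
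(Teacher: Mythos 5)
Your proposal is correct and follows exactly the route the paper intends: it instantiates Corollary \ref{f.g.B:2} (i.e., Theorem \ref{c.g.B:2}) with the separation pair $({_R\flat},\Rfree)$, identifies $\Add\Rfree$ with the projectives and $\oplus\Rfree$ with the free modules, and uses Corollary \ref{flatISstrict} and Theorem \ref{LMLisML} to translate (strict) ${_R\flat}$-atomic into (strict) Mittag-Leffler for flat modules, with self-separation supplied by Corollary \ref{pureresolvself}. The verification of the separation-pair axioms and of $\PGen\Rfree={_R\flat}=\tC$ is exactly the bookkeeping the paper leaves implicit.
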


\begin{cor} The following are equivalent.

\begin{enumerate} [(i)]
 \item Every projective $R$-module is a direct sum of finitely generated projective modules.
 
 
 \item Every flat  strict Mittag-Leffler module is  $\Rproj$-separable.
 
\item Every  flat Mittag-Leffler module  is  $\flat$-pure $\Rproj$-separable.
 \end{enumerate}
\end{cor}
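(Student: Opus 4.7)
The plan is to apply Corollary \ref{f.g.B:2} (the finitely generated-$\cB$ strengthening of Theorem \ref{c.g.B:2}) to the separation pair $(\cL,\cB)=({}_R\flat,\Rproj)$, exactly parallel to how Corollary \ref{proj=free} was obtained via $\cB=\Rfree$. The three conditions (i)--(iii) of the present corollary should correspond respectively to conditions (iii), (v), and (vi) of Theorem \ref{c.g.B:2} (using the replacements permitted by Corollary \ref{f.g.B:2} because $\Rproj$ consists of finitely generated modules).

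First I would check that $({}_R\flat,\Rproj)$ really is a valid separation pair and that Theorem \ref{c.g.B:2}(2) applies in full force, i.e.\ that $\cL = \tC$ and $\tC$ is self-separating. As noted in the proof of Corollary \ref{flatISstrict}, $_R\flat$ is purely resolving and purely generated by $\Rproj$; hence $\tC = \PGen(\Rproj) = {}_R\flat = \cL$. Members of $\Rproj$ are pure-projective, hence strict Mittag-Leffler, hence strict $_R\flat$-atomic, and $\Rproj$ is closed under finite direct sum and small. Moreover $_R\flat$, being purely resolving, is (countably) self-separating by Corollary \ref{pureresolvself}. So all hypotheses of the relevant part of Theorem \ref{c.g.B:2} and of Corollary \ref{f.g.B:2} are met.

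Next I would match conditions. Condition (iii) of Theorem \ref{c.g.B:2} reads $\Add\cB = \oplus\cB$; since $\Add(\Rproj)$ equals the class of projective modules (a direct summand of a direct sum of finitely generated projectives is exactly a projective) and $\oplus\Rproj$ is the class of direct sums of finitely generated projectives, this is our (i). Condition (v) of Corollary \ref{f.g.B:2} asks that every strict $\cL$-atomic module in $\PGen\cB$ be (finitely) separable by $\cB$; in our setting, a flat module is strict $_R\flat$-atomic iff it is strict Mittag-Leffler by Corollary \ref{flatISstrict}, so (v) is exactly our (ii). Condition (vi) of Corollary \ref{f.g.B:2} asks every $\tC$-atomic module in $\tC$ to be (finitely) $\tC$-pure separable by $\cB$; here $\tC = {}_R\flat$ and flat $\flat$-atomic coincides with flat Mittag-Leffler (one direction is trivial, the other is Theorem \ref{LMLisML}), and $\tC$-pure inside $\tC$ is $\flat$-pure, so (vi) is exactly our (iii). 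The equivalence of (i), (ii), (iii) then follows at once from Corollary \ref{f.g.B:2}.

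The only step that requires any thought is the verification that (ii) and (iii) (which superficially mention only the ``strict'' and non-strict Mittag-Leffler versions, respectively) really do align with the formally weaker ``strict $_R\flat$-atomic'' and ``$_R\flat$-atomic'' conditions in the cited theorem. This is precisely where Corollary \ref{flatISstrict} and Theorem \ref{LMLisML} enter, both already established in the purely resolving setting; no new obstacle appears beyond these identifications. Everything else is notational translation of the separation pair $({}_R\flat,\Rproj)$.
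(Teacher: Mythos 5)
Your proposal is correct and follows exactly the route the paper intends: the corollary is stated as an instance of Theorem \ref{c.g.B:2} via Corollary \ref{f.g.B:2} for the separation pair $({}_R\flat,\Rproj)$, with Corollary \ref{flatISstrict} and Theorem \ref{LMLisML} supplying the identifications of strict $_R\flat$-atomic with strict Mittag-Leffler and of flat $\flat$-atomic with flat Mittag-Leffler. Your verification of the separation-pair hypotheses and the matching of conditions (iii), (v), (vi) to (i), (ii), (iii) is precisely the translation the paper leaves implicit.
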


In \cite[\S5]{GIRT} one may find  lists of rings over which either of these cases take place. For instance, the case of Cor.\ref{proj=free} takes place over the integers. So, a torsionfree (=flat) abelian group is strict Mittag-Leffler iff it is $\Rfree$-separable (such groups are simply called `separable' in \cite[Def.2.4 and after]{EM}). Correspondingly, \cite[Prop.\,7]{AF} implies that a torsionfree abelian group is Mittag-Leffler iff it is $\aleph_1$-free, which is to say that every countable subgroup is free. 

\begin{exam}\label{counterex}
 \cite[Exercise IV.11]{EM} says that ${\mathbb{Z}}^{<\aleph_1}$, the subgroup of ${\mathbb{Z}}^{\aleph_1}$ of all elements with countable support, is $\aleph_1$-free, but not separable (not even torsionless). Thus this group is Mittag-Leffler, but not strict Mittag-Leffler.
\end{exam}


\section{When strict $\cL$-atomic modules \emph{are} strict $\langle\cL\rangle$-atomic}\label{lastsect}

Recall from the referee's Remark \ref{ref} that there are strict $\cL$-atomic modules which are not strict $\langle\cL\rangle$-atomic.
 I conclude with a proof that in the context of the previous section, the opposite is the case.

\begin{prop}
 Suppose $(\cL, \cB)$ is a separation pair such that all modules in $\cB$ are strict $\langle\cL\rangle$-atomic (e.g., when all modules in $\cB$ are countably generated).

Every strict $\cL$-atomic module in $\PGen\cB$  is strict $\langle\cL\rangle$-atomic.
\end{prop}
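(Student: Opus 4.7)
The plan is to use the separation machinery developed in Theorem \ref{Thm.1}(2), but in two different ``worlds'': we extract a locally separable enlargement of $M$ within the $\cL$-world, and then reinterpret that separation in the $\langle\cL\rangle$-world.

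First I would dispense with the parenthetical hypothesis: if every module in $\cB$ is countably generated, then, being strict $\cL$-atomic by the definition of a separation pair, each $B\in\cB$ is automatically strict $\langle\cL\rangle$-atomic by the ``Strict'' Lemma \ref{StrictLemma}. So in either case we may assume every $B\in\cB$ is strict $\langle\cL\rangle$-atomic.

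Next, applying Theorem \ref{Thm.1}(2) to the given strict $\cL$-atomic $M\in\PGen\cB$, I obtain a module $N$ containing $M$ as a direct summand and locally separable by $\cB$; the proof of that theorem shows one may even take $N = M\oplus P^{\ast}$. (Alternatively, note $M\oplus P^{\ast}$ is strict $\cL$-atomic as a direct sum of strict $\cL$-atomic modules; the Separation Lemma provides $\cL$-pure separation by $\cB$, which by Lemma \ref{locsep}(1) is the same as local separation.)

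Now comes the key step: invoke Lemma \ref{basicfact}(2) with the class $\langle\cL\rangle$ in place of $\cL$. Since $\cB$ consists of strict $\langle\cL\rangle$-atomic modules and $N$ is locally separable by $\cB$, the lemma gives that $N$ is strict $\langle\cL\rangle$-atomic. Finally, $M$ is a direct summand, hence a pure and therefore $\langle\cL\rangle$-pure, submodule of $N$, so Lemma \ref{Lpureofstrict} (applied to $\langle\cL\rangle$) yields that $M$ is strict $\langle\cL\rangle$-atomic, as desired.

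There isn't really a ``main obstacle'' here beyond carefully bookkeeping the two classes $\cL$ and $\langle\cL\rangle$: the whole content is that $\cB$ plays a double role, as a source of strict atomicity in the $\cL$-world (which gets us the separation of $M\oplus P^{\ast}$ by $\cB$) and in the $\langle\cL\rangle$-world (which lets us promote that separation to strict $\langle\cL\rangle$-atomicity). The only subtlety is checking that Lemma \ref{basicfact}(2) and Lemma \ref{Lpureofstrict} are stated generally enough to apply with $\langle\cL\rangle$ substituted for $\cL$, which they are since neither proof used any special property of $\cL$ beyond being a class of modules.
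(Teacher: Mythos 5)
Your proof is correct and follows essentially the same route as the paper: reduce the parenthetical hypothesis via the `Strict' Lemma, apply Theorem \ref{Thm.1}(2) to embed $M$ as a direct summand of a module locally separable by $\cB$, then apply Lemma \ref{basicfact}(2) with $\langle\cL\rangle$ in place of $\cL$ and pass back to the direct summand. The only cosmetic difference is that you cite Lemma \ref{Lpureofstrict} for the last step where the paper just invokes closure of strict $\langle\cL\rangle$-atomic modules under (pure submodules, hence) direct summands; both are valid.
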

\begin{proof} Recall from the `Strict' Lemma \ref{StrictLemma} that countably generated strict $\cL$-atomic modules are strict $\langle\cL\rangle$-atomic (which explains the parenthetical clause).

Let now $M\in\PGen\cB$ be strict $\cL$-atomic.  By Theorem \ref{Thm.1}(2), $M$ is a   direct summand of  a module that is locally separable by $\cB$. By hypothesis and Lemma \ref{basicfact} (2)---applied to $\langle\cL\rangle$ rather than $\cL$, the module $M$ is a direct summand of a strict $\langle\cL\rangle$-atomic module, hence itself $\langle\cL\rangle$-atomic.
\end{proof}

\begin{cor}
 Strict $_R\flat$-atomic 
 modules are strict $\langle{_R\flat}\rangle$-atomic, i.e., strict $\langle{\sharp_R}\rangle$-Mittag-Leffler.
\end{cor}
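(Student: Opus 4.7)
The plan is to apply the preceding proposition to the pair
$(\cL,\cB):=({_R\flat},\Rproj)$,
where $\Rproj$ denotes the class of finitely generated projective left $R$-modules.

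The first step is to verify that $({_R\flat},\Rproj)$ is a separation pair. The class $_R\flat$ is closed under arbitrary direct sums; it is also closed under $\cL$-pure epimorphic images, because any $\cL$-pure epimorphism with source already in $\cL$ is an honest pure epimorphism by Remark~\ref{LpureInL}, and pure quotients of flat modules are flat. The class $\Rproj$ is small up to isomorphism, closed under finite direct sum, and contained in $\cL$; its members are finitely presented, hence strict atomic, and so in particular strict $\cL$-atomic.

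Next, I would check the extra hypothesis of the proposition, namely that every $B\in\Rproj$ be strict $\langle\cL\rangle$-atomic. This is automatic: a finitely presented $B$ is strict $\RMod$-atomic, and since $\langle{_R\flat}\rangle\subseteq\RMod$, any free realization of a pp formula for all of $\RMod$ is in particular a free realization for $\langle{_R\flat}\rangle$. I would also explicitly record the standard identification $\PGen(\Rproj)={_R\flat}$---flats are exactly the pure epimorphic images of direct sums of finitely generated projectives---which is the very fact already invoked in the proof of Corollary~\ref{flatISstrict}.

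With these ingredients in place, the proposition applies verbatim and yields the first assertion: every strict $_R\flat$-atomic module in $\PGen\cB={_R\flat}$ is strict $\langle{_R\flat}\rangle$-atomic. The reformulation as strict $\langle\sharp_R\rangle$-Mittag-Leffler is then immediate from Definition~\ref{sKML} together with Herzog's elementary duality between the definable subcategories $\langle{_R\flat}\rangle$ and $\langle\sharp_R\rangle$ recalled at the opening of the $_R\sharp$-atomic section. I anticipate no genuine obstacle: the whole content of the corollary is the right choice of separation pair, and once that is made the work is already done inside the proposition.
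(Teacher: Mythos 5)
Your separation pair $({_R\flat},\Rproj)$, the verification of its axioms, the observation that finitely presented modules are strict atomic (hence strict $\langle{_R\flat}\rangle$-atomic), and the identification $\PGen(\Rproj)={_R\flat}$ are all correct, and this is exactly the route the paper intends: the corollary is stated without proof directly after the proposition, and the same pure generation of ${_R\flat}$ by $\Rproj$ is what drives Corollary \ref{flatISstrict}.

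But notice what your own phrasing concedes. The proposition yields strict $\langle\cL\rangle$-atomicity only for modules \emph{in} $\PGen\cB$, and for $\cL={_R\flat}$ one always has $\PGen\cB\subseteq{_R\flat}$ (direct sums and pure epimorphic images of flat modules are flat), no matter which admissible $\cB$ is chosen. So what you have actually proved is that every \emph{flat} strict ${_R\flat}$-atomic module is strict $\langle{_R\flat}\rangle$-atomic. That is not the printed statement, which carries no flatness hypothesis---and for flat modules the conclusion is in any case weaker than Corollary \ref{flatISstrict}, which already gives strict $\RMod$-atomicity, a fortiori strict $\langle{_R\flat}\rangle$-atomicity since $\langle{_R\flat}\rangle\subseteq\RMod$. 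The instances the corollary would have to add are the non-flat strict ${_R\flat}$-atomic modules, e.g.\ the countable torsion abelian groups from the paper's earlier example; these lie outside $\PGen\cB$ for every choice of $\cB$, so the proposition never reaches them. (For countably generated modules the `Strict' Lemma settles the matter, and over right coherent rings ${_R\flat}$ is itself definable so there is nothing to prove; in general I see no argument.) So either the corollary is to be read with the implicit restriction to $\PGen\cB$---in which case your proof is complete but the statement is redundant---or it asserts more than the proposition delivers, and you should flag that your argument does not give the first assertion ``verbatim'' rather than claim that it does.
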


%

%
%
%
%
\bibliographystyle{amsplain}

\end{document}